\def\journal@id{~}
\def\journal@name{~}
\def\journal@url{~}
\newtheorem{thm}{Theorem}[section] 
\newtheorem{cor}[thm]{Corollary}  
\newtheorem{lem}[thm]{Lemma}
\newtheorem{prop}[thm]{Proposition}
\newtheorem{conj}[thm]{Conjecture}
\theoremstyle{definition}
\newtheorem{defn}[thm]{Definition}
\theoremstyle{remark}
\newtheorem{rem}[thm]{Remark}
\numberwithin{equation}{section}
\begin{document}

\begin{frontmatter}

\title{On the exchange of intersection and
supremum of $\sigma$-fields in filtering theory\protect\thanksref{T1}}
\runtitle{Exchange of intersection and supremum}
\thankstext{T1}{This work was partially supported by NSF grant 
DMS-1005575.}

\begin{aug}
\author{\fnms{Ramon} \snm{van Handel}\ead[label=e2]{rvan@princeton.edu}}
\runauthor{Ramon van Handel}
\affiliation{Princeton University}
\address{Sherrerd Hall 227,\\
Princeton University,\\
Princeton, NJ 08544, USA. \\ \printead{e2}}
\end{aug}

\begin{abstract}
We construct a stationary Markov process with trivial tail $\sigma$-field
and a nondegenerate observation process such that the corresponding
nonlinear filtering process is not uniquely ergodic.  This settles in the
negative a conjecture of the author in the ergodic theory of nonlinear
filters arising from an erroneous proof in the classic paper of H.\ 
Kunita
(1971), wherein an exchange of intersection and supremum of
$\sigma$-fields is taken for granted.
\end{abstract}

\begin{keyword}[class=AMS]
\kwd{37A25}                     
\kwd{37A50}                     
\kwd{60F20}                     
\kwd{60G35}                     
\kwd{60J05}                     
\kwd{60K37}                     
\end{keyword}

\begin{keyword}
\kwd{nonlinear filtering}
\kwd{unique ergodicity}
\kwd{tail $\sigma$-field}
\kwd{exchange of intersection and supremum}
\kwd{random walk in random scenery}
\end{keyword}

\end{frontmatter}

\section{Introduction and main result}
\label{sec:intro}

Let $E$ and $F$ be Polish spaces, and consider an $E\times F$-valued
stochastic process $(X_k,Y_k)_{k\in\mathbb{Z}}$ with the following 
properties:
\begin{enumerate}
\item $(X_k,Y_k)_{k\in\mathbb{Z}}$ is a stationary Markov process.
\item There exist transition kernels $P$ from $E$ to $E$ and
$\Phi$ from $E$ to $F$ such that $\mathbf{P}[(X_n,Y_n)\in 
A|X_{n-1},Y_{n-1}]=\int \mathbf{1}_A(x,y)\,P(X_{n-1},dx)\,\Phi(x,dy)$.
\end{enumerate}
Such a process is called a stationary \emph{hidden Markov model}; its 
dependence structure is illustrated schematically in Figure \ref{fig:hmm}. 
In applications, $(X_k)_{k\in\mathbb{Z}}$ represents a ``hidden'' process 
which is not directly observable, while the observable process 
$(Y_k)_{k\in\mathbb{Z}}$ represents ``noisy observations'' of the hidden 
process \cite{CMR05}.

\begin{figure}[t]
{\normalsize
\begin{equation*}
\begin{CD}
\cdots @>P>> X_{n-1} @>P>> X_n @>P>> X_{n+1} @>P>> \cdots \\
@. @V{\Phi}VV @V{\Phi}VV @V{\Phi}VV @. \\
\cdots @. Y_{n-1} @. Y_n @. Y_{n+1} @. \cdots
\end{CD}
\end{equation*}}
\caption{Dependence structure of a hidden Markov model.\label{fig:hmm}}
\end{figure}

Of fundamental importance in the theory of hidden Markov models is the 
\emph{nonlinear filter} $(\pi_k)_{k\ge 0}$, defined as the regular 
conditional probability
$$
	\pi_n = \mathbf{P}[X_n\in\,\cdot\,|Y_1,\ldots,Y_n]. 
$$ 
That is, $\pi_n$ is the conditional distribution of the current state of 
the hidden process given the observations to date.  It is a basic fact in 
this theory that the filtering process $(\pi_k)_{k\ge 0}$ is itself a 
Markov process taking values in the space $\mathcal{P}(E)$ of probability 
measures on $E$, whose transition kernel $\mathsf{\Pi}$ can be expressed 
in terms of the transition kernels $P$ and $\Phi$ that determine the model 
(this and other basic facts on nonlinear filters are reviewed in the 
appendix).

Following Kunita \cite{Kun71}, we will be interested in the structure of 
the space of $\mathsf{\Pi}$-invariant probability measures in 
$\mathcal{P}(\mathcal{P}(E))$.  It is easily seen that for every 
$\mathsf{\Pi}$-invariant measure 
$\mathsf{m}\in\mathcal{P}(\mathcal{P}(E))$, the barycenter 
$\mu\in\mathcal{P}(E)$ of $\mathsf{m}$ must be invariant for the 
transition kernel $P$ of the hidden process.  Conversely, for every 
$P$-invariant measure $\mu\in\mathcal{P}(E)$, there exists at least one 
$\mathsf{\Pi}$-invariant measure 
$\mathsf{m}\in\mathcal{P}(\mathcal{P}(E))$ whose barycenter is $\mu$. 
However, the latter need not be unique.

\begin{thm}[Kunita]
\label{thm:exchg}
Let $\mathbf{P}[X_0\in\,\cdot\,]:=\mu$ be the $P$-invariant measure 
defined by the stationary hidden Markov model $(X_k,Y_k)_{k\in\mathbb{Z}}$ 
as above.  If
\begin{equation}
\label{eq:exchg}
	\bigcap_{n\le 0}\big(\mathcal{F}_{-\infty,0}^Y\vee
	\mathcal{F}_{-\infty,n}^X\big) =
	\mathcal{F}_{-\infty,0}^Y\quad\mathbf{P}\mbox{-a.s.},
\end{equation}
then there exists a unique $\mathsf{\Pi}$-invariant measure with 
barycenter $\mu$.  The converse holds if in addition $\Phi$ possesses a 
transition density with respect to some $\sigma$-finite reference measure.
$[$Here $\mathcal{F}_{-\infty,0}^Y:=\sigma\{Y_k:k\le 0\}$,
$\mathcal{F}_{-\infty,n}^X:=\sigma\{X_k:k\le n\}$.$]$
\end{thm}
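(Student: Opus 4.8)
The plan is to reduce the question of uniqueness to a comparison of two regular conditional distributions of $X_0$, and to recognize the $\sigma$-field in (\ref{eq:exchg}) through the reverse martingale convergence theorem. Write $\mathcal{H}_0:=\bigcap_{n\le0}(\mathcal{F}_{-\infty,0}^Y\vee\mathcal{F}_{-\infty,n}^X)$ for the left-hand side of (\ref{eq:exchg}). The starting observation is that the stationary process $\pi_n^-:=\mathbf{P}[X_n\in\,\cdot\,|\mathcal{F}_{-\infty,n}^Y]$ obeys the filtering recursion and is shift-stationary, so its law $\mathsf{m}_-$ is a $\mathsf{\Pi}$-invariant measure with barycenter $\mu$; the whole theorem amounts to the assertion that, under (\ref{eq:exchg}), $\mathsf{m}_-$ is the only such measure, and conversely. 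Since the family $(\mathcal{F}_{-\infty,0}^Y\vee\mathcal{F}_{-\infty,n}^X)_{n\le0}$ decreases as $n\to-\infty$, the reverse martingale convergence theorem shows that the smoothers $\mathbf{P}[X_0\in\,\cdot\,|\mathcal{F}_{-\infty,0}^Y\vee\mathcal{F}_{-\infty,n}^X]$ converge to $\mathbf{P}[X_0\in\,\cdot\,|\mathcal{H}_0]$; condition (\ref{eq:exchg}) is exactly the $\sigma$-field identity $\mathcal{H}_0=\mathcal{F}_{-\infty,0}^Y$, which in particular forces this limit to equal $\pi_0^-$. It is essential that conditioning on the decreasing intersection is always legitimate, whereas the exchange $\mathcal{H}_0=\mathcal{F}_{-\infty,0}^Y\vee\bigcap_{n}\mathcal{F}_{-\infty,n}^X$ is precisely the identity that may fail and must be avoided throughout.

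For sufficiency I would argue as follows. Using the structural facts on nonlinear filters recalled in the appendix, realize an arbitrary $\mathsf{\Pi}$-invariant measure $\mathsf{m}$ with barycenter $\mu$ on the two-sided stationary model as the law of an enlarged stationary filter $\pi_n=\mathbf{P}[X_n\in\,\cdot\,|\mathcal{G}_n]$, where $(\mathcal{G}_n)$ is a shift-stationary filtration with $\mathcal{F}_{-\infty,n}^Y\subseteq\mathcal{G}_n$ and $\mathcal{G}_{n+1}=\mathcal{G}_n\vee\sigma(Y_{n+1})$. The key structural point is that the germ $\bigcap_n\mathcal{G}_n$ of any such filtration is subordinate, after conditioning $X_0$, to $\mathcal{H}_0$: heuristically, the only information at $-\infty$ that the dynamics can transmit forward is information already contained in $\bigcap_{n}(\mathcal{F}_{-\infty,0}^Y\vee\mathcal{F}_{-\infty,n}^X)$. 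Hypothesis (\ref{eq:exchg}) then collapses this germ into $\mathcal{F}_{-\infty,0}^Y$, and propagating forward through the recursion yields $\pi_0=\pi_0^-$ a.s., hence $\mathsf{m}=\mathsf{m}_-$. This direction uses no assumption on $\Phi$.

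For the converse, assume that $\Phi$ admits a transition density; the argument has two parts. First, uniqueness forces the measure-valued equality $\mathbf{P}[X_0\in\,\cdot\,|\mathcal{H}_0]=\mathbf{P}[X_0\in\,\cdot\,|\mathcal{F}_{-\infty,0}^Y]$, argued by contraposition: if these conditional laws differ then, using the explicit likelihood afforded by the density, one exhibits a stationary measure-valued process $\rho_n=\mathbf{P}[X_n\in\,\cdot\,|\mathcal{H}_n]$, with $\mathcal{H}_n$ the shift of $\mathcal{H}_0$, that is genuinely a version of the filter, so its law is a second $\mathsf{\Pi}$-invariant measure with barycenter $\mu$ distinct from $\mathsf{m}_-$. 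Second, one upgrades this measure-valued equality to the $\sigma$-field identity (\ref{eq:exchg}): applying the equality at every time by stationarity and invoking the non-degeneracy of the density, one concludes that every $\mathcal{H}_0$-measurable function is $\mathcal{F}_{-\infty,0}^Y$-measurable. Both parts genuinely require the density, which is what licenses passing from the equality of conditional laws back to an equality of $\sigma$-fields.

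The main obstacle is this converse construction. Verifying that $\rho_n$ actually satisfies the filtering recursion requires controlling how $\sigma(Y_{n+1})$ interacts with the intersection defining $\mathcal{H}_n$ — an operation dangerously close to the very exchange of $\vee$ and $\bigcap$ whose failure is the subject of this paper — and it is exactly here that the transition density of $\Phi$ is indispensable, rendering the Bayesian update well defined and the germ filtration admissible. The subsequent promotion of the measure-valued equality to the $\sigma$-field equality (\ref{eq:exchg}) is the second delicate point, and again rests on the non-degeneracy of $\Phi$. By contrast, the reverse martingale reduction and the sufficiency argument are comparatively routine, since they invoke only legitimate conditioning on a decreasing intersection rather than the forbidden exchange itself.
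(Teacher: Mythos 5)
Your converse argument is essentially the paper's: one shows that $\pi_k^1:=\mathbf{P}[X_k\in\,\cdot\,|\bigcap_n(\mathcal{F}^Y_{-\infty,k}\vee\mathcal{F}^X_{-\infty,n})]$ satisfies the filter recursion (this is where the density enters, via the identity (\ref{eq:faspt})), so its law is a second $\mathsf{\Pi}$-invariant measure with barycenter $\mu$; uniqueness then forces it to agree with the law of $\pi_k^0:=\mathbf{P}[X_k\in\,\cdot\,|\mathcal{F}^Y_{-\infty,k}]$, and the equality is upgraded to the $\sigma$-field identity by working backwards through the smoothing kernels. One step you skip: equality of the \emph{laws} of $\pi_0^1$ and $\pi_0^0$ does not by itself give a.s.\ equality of the random measures; the paper gets this from the $L^2$ identity $\mathbf{E}[(\pi^1_0(A)-\pi^0_0(A))^2]=\mathbf{E}[(\pi^1_0(A))^2]-\mathbf{E}[(\pi^0_0(A))^2]=0$, which uses the tower property $\mathbf{E}[\pi^1_0(A)|\mathcal{F}^Y_{-\infty,0}]=\pi^0_0(A)$. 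That is a small but necessary addition.

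The sufficiency direction, which you call ``comparatively routine,'' is where your proposal has a genuine gap. Your ``key structural point''---that the germ $\bigcap_n\mathcal{G}_n$ of an arbitrary admissible enlarged filtration is, after conditioning, subordinate to $\mathcal{H}_0=\bigcap_n(\mathcal{F}^Y_{-\infty,0}\vee\mathcal{F}^X_{-\infty,n})$---is asserted only heuristically, and it is precisely a statement about intersections of $\sigma$-fields of the type this paper is devoted to showing cannot be taken for granted. (Note also that the germ itself need not be contained in $\mathcal{H}_0$ at all: adjoin to $\mathcal{G}_n$ an independent randomization, as one must in order to realize a general $\mathsf{\Pi}$-invariant $\mathsf{m}$, and the germ contains $\sigma$-fields unrelated to $\mathcal{H}_0$; so the statement would have to be formulated and proved as an assertion about conditional laws, not about $\sigma$-fields.) The paper avoids identifying any such germ. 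Instead it realizes $\mathsf{m}$ on a one-sided canonical model with randomized initial law $m_0\sim\mathsf{m}$, introduces the three filters $\pi_n^{\rm min}$, $\pi_n^{\mathsf{m}}$, $\pi_n^{\rm max}$ (conditioning on successively larger $\sigma$-fields), and uses Jensen's inequality to sandwich $\int F\,d\mathsf{m}$ between $\mathbf{E}_{\mathsf{m}}[F(\pi_n^{\rm min})]$ and $\mathbf{E}_{\mathsf{m}}[F(\pi_n^{\rm max})]$ for every $F(\nu)=\kappa(\int f_1\,d\nu,\ldots,\int f_p\,d\nu)$ with $\kappa$ convex. The laws of the two extreme filters are computed on the stationary model as conditionings on \emph{finitely generated} $\sigma$-fields, the limits $n\to\infty$ are taken by martingale convergence (which only ever conditions on a monotone limit, never exchanges $\cap$ and $\vee$), and (\ref{eq:exchg}) makes the two bounds coincide; a Stone--Weierstrass/Dynkin argument shows this class of $F$ is measure-determining. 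Your reverse-martingale reduction and your ``propagating forward through the recursion'' would need to be replaced by something like this sandwich to yield a complete proof.
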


\begin{rem}
Though the main ideas of the proof are implicitly contained in \cite{Kun71},
this simple and general statement does not appear in the literature 
without various additional simplifying assumptions.  For 
completeness, and in order to make this paper self-contained, we therefore 
include the proof in the appendix.
\end{rem}

Theorem \ref{thm:exchg} is not actually stated as such by Kunita 
\cite{Kun71}.  Instead, Kunita assumes that the hidden process 
$(X_k)_{k\in\mathbb{Z}}$ is purely nondeterministic:

\begin{defn}
A stochastic process $(X_k)_{k\in\mathbb{Z}}$ is called \emph{purely 
nondeterministic} if its past tail $\sigma$-field
$\bigcap_{n\le 0}\mathcal{F}_{-\infty,n}^X$ is $\mathbf{P}$-a.s.\ trivial.
\end{defn}

Kunita's main theorem states\footnote{
	In fact, Kunita's paper is written in the context of a
	continuous time model with white noise observations.
	None of these specific features are used in the proofs, however.
} that if the hidden process 
$(X_k)_{k\in\mathbb{Z}}$ is purely nondeterministic, then there
exists a unique $\mathsf{\Pi}$-invariant measure with barycenter $\mu$.
Kunita's proof, however, does not establish this claim.  Indeed, at the 
crucial point in the proof (\cite{Kun71}, top of p.\ 384), Kunita 
implicitly takes for granted that the following exchange of intersection 
and supremum is permitted:
\begin{equation}
\label{eq:incorr}
	\bigcap_{n\le 0}\big(\mathcal{F}_{-\infty,0}^Y\vee
	\mathcal{F}_{-\infty,n}^X\big) \,\stackrel{?}{=}\,
	\mathcal{F}_{-\infty,0}^Y\vee
	\bigcap_{n\le 0}\mathcal{F}_{-\infty,n}^X
	\quad\mathbf{P}\mbox{-a.s.}
\end{equation}
If this exchange were justified, then Kunita's result would indeed follow 
immediately from Theorem \ref{thm:exchg}.  However, in general, such an 
exchange of intersection and supremum is \emph{not} permitted, as will be
shown in section \ref{sec:simplex} below.

The goal of this paper is to settle, in the negative, a natural
conjecture on the validity of the identity (\ref{eq:incorr}).  Before we 
can describe the conjecture, we must review what is known about the 
validity of (\ref{eq:incorr}) in the filtering setting.

\begin{rem}
Beyond the relevance of (\ref{eq:incorr}) to filtering theory, the 
problem studied in this paper provides a case study on an enigmatic 
problem: when is the exchange of countable intersection and supremum 
of $\sigma$-fields permitted?  Such problems arise in remarkably 
diverse areas of probability theory.  The following references provide 
some further context on this general problem.
\begin{enumerate}
\item Several distinguished mathematicians have given erroneous proofs
related to the exchange of intersection and supremum of $\sigma$-fields, 
including Kolmogorov (see \cite{Sin89}, p.\ 837) and Wiener 
(see \cite{Mas66}, pp.\ 91--93).
\item A simple counterexample to the validity of the exchange of 
intersection and supremum due to Barlow and Perkins can be found in
\cite{Wil91}, p.\ 48.  This example is closely related
to the example given in section \ref{sec:simplex} below.  See also
\cite{CY03}, pp.\ 29--30 and the references therein.
\item The exchange of intersection and supremum appears in diverse
probabilistic settings: see \cite{Weiz83}, section 5 
and the references therein for various examples and counterexamples.
In particular, the innovations problem and several variants of 
Tsirelson's celebrated counterexample provide a rich setting in which
one can study the exchange of intersection and supremum problem; see
\cite{YY11,ES01,Lau10,BL07} and the references therein. See also
\cite{Van10sic} for a different connection to filtering theory.
\item Von Weizs\"acker \cite{Weiz83} gives a general necessary and 
sufficient condition for validity of the exchange of intersection and 
supremum, which is however often difficult to apply in practice.  It is 
shown in \cite{CLP07} that the exchange of intersection and supremum is 
always valid in a given probability space if and only if its probability 
measure is purely atomic.
\end{enumerate}
\end{rem}

\subsection{A simple counterexample}
\label{sec:simplex}

The gap in Kunita's proof was discovered in \cite{BCL04}, where a 
simple counterexample to (\ref{eq:incorr}) was given.  The following
variant of this example will be helpful in understanding our main result.

Let $(\xi_k)_{k\in\mathbb{Z}}$ be an i.i.d.\ sequence of (Bernoulli) 
random variables uniformly distributed in $\{0,1\}$.  Let 
$E=\{0,1\}\times\{0,1\}$ and $F=\{0,1\}$, and define the stochastic 
process $(X_k,Y_k)_{k\in\mathbb{Z}}$ taking values in $E\times F$ as 
follows:
$$
	X_n = (\xi_{n-1},\xi_n),\qquad\qquad
	Y_n = |\xi_n-\xi_{n-1}|.
$$
It is evident that $(X_k,Y_k)_{k\in\mathbb{Z}}$ is a stationary hidden 
Markov model.  Note that:
\begin{itemize}
\item Clearly $\xi_0 = (\xi_{n-1} + Y_n + \cdots + Y_0)
\mathop{\mathrm{mod}}2$ for any $n\le 0$.  Therefore,
$$
	\xi_0 \quad\mbox{is}\quad
	\bigcap_{n\le 0}\big(\mathcal{F}_{-\infty,0}^Y\vee
	\mathcal{F}_{-\infty,n}^X\big)\mbox{-measurable}.
$$
\item On the other hand, as
$\mathbf{P}[\xi_0=0|\mathcal{F}_{-\infty,0}^Y]=1/2$
by direct computation,
$$
	\xi_0 \quad\mbox{is not}\quad
	\mathcal{F}_{-\infty,0}^Y\mbox{-measurable }	
	\mathbf{P}\mbox{-a.s.}
$$
\item $(X_k)_{k\in\mathbb{Z}}$ is purely 
nondeterministic by the Kolmogorov zero-one law.
\end{itemize}
Therefore, evidently the identity (\ref{eq:incorr}) does not hold in 
this example.

\subsection{A positive result and a conjecture}

In view of the counterexample above, one might expect that the gap in 
Kunita's proof cannot be resolved in general.  However, it turns out that 
such counterexamples are extremely fragile.  For example, let 
$(\gamma_k)_{k\in\mathbb{Z}}$ be an i.i.d.\ sequence of standard Gaussian 
random variables, and let us modify the observation model in the above 
example to
$$
	Y_n = |\xi_n-\xi_{n-1}| + \varepsilon\gamma_n.
$$
Then it can be verified that for arbitrarily small $\varepsilon>0$, the 
identity (\ref{eq:incorr}) holds.  It is only in the degenerate case 
$\varepsilon=0$ that (\ref{eq:incorr}) fails.  This suggests that the 
presence of some amount of noise, however small, is sufficient in order to 
ensure the validity of (\ref{eq:incorr}).  This intuition can be made 
precise in a surprisingly general setting, which is established by the 
following result due to the author \cite{vH09}.  Here the notion of 
nondegeneracy formalizes the presence of observation noise.

\begin{defn}
The hidden Markov model $(X_k,Y_k)_{k\in\mathbb{Z}}$ is said to possess
\emph{nondegenerate observations} if there exist a $\sigma$-finite 
reference measure $\varphi$ on $F$ and a strictly positive measurable 
function $g:E\times F\to\mbox{}]0,\infty[\mbox{}$ such that
$$
	\Phi(x,A) = \int \mathbf{1}_A(y)\,g(x,y)\,\varphi(dy)
	\quad\mbox{for all }x\in E,~A\in\mathcal{B}(F).
$$
\end{defn}

\begin{thm}[\cite{vH09}]
\label{thm:vh}
Given a stationary hidden Markov model $(X_k,Y_k)_{k\in\mathbb{Z}}$ 
as defined in this section, with $P$-invariant measure 
$\mathbf{P}[X_0\in\,\cdot\,]:=\mu$, assume that:
\begin{enumerate}
\item The hidden process $(X_k)_{k\in\mathbb{Z}}$ is absolutely regular:
\begin{equation}
\label{eq:harris}
	\mathbf{E}\big[\|\mathbf{P}[X_n\in\,\cdot\,|X_0]-\mu
	\|_{\rm TV}\big]
	\xrightarrow{n\to\infty}0.
\end{equation}
\item The observations are nondegenerate. 
\end{enumerate}
Then the identity (\ref{eq:exchg}) holds true.
\end{thm}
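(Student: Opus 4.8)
The plan is to prove the two inclusions in (\ref{eq:exchg}) separately; the inclusion $\mathcal{F}_{-\infty,0}^Y\subseteq\bigcap_{n\le 0}(\mathcal{F}_{-\infty,0}^Y\vee\mathcal{F}_{-\infty,n}^X)$ is trivial, so all the content lies in the reverse inclusion. Abbreviate $\mathcal{G}:=\mathcal{F}_{-\infty,0}^Y$, $\mathcal{H}_n:=\mathcal{F}_{-\infty,n}^X$, and $\mathcal{A}:=\bigcap_{n\le 0}(\mathcal{G}\vee\mathcal{H}_n)$. The $\sigma$-fields $\mathcal{G}\vee\mathcal{H}_n$ decrease as $n\to-\infty$, and $\mathcal{A}\subseteq\mathcal{G}\vee\mathcal{H}_0\subseteq\mathcal{F}_{-\infty,0}^X\vee\mathcal{G}$, so by the reverse martingale convergence theorem $\mathbf{E}[W\mid\mathcal{G}\vee\mathcal{H}_n]\to\mathbf{E}[W\mid\mathcal{A}]$ in $L^1$ for every bounded $W$. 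I would therefore try to show that in fact this limit equals $\mathbf{E}[W\mid\mathcal{G}]$ for every bounded $W$ that is $\sigma(X_j,Y_j:-N\le j\le 0)$-measurable for some $N$. Such $W$ generate $\mathcal{F}_{-\infty,0}^X\vee\mathcal{G}$, and the family of $W$ for which this holds is closed under bounded monotone limits, so a functional monotone class argument then gives $\mathbf{E}[W\mid\mathcal{A}]=\mathbf{E}[W\mid\mathcal{G}]$ for all bounded $\mathcal{F}_{-\infty,0}^X\vee\mathcal{G}$-measurable $W$; taking $W$ itself $\mathcal{A}$-measurable yields $W=\mathbf{E}[W\mid\mathcal{G}]$, i.e.\ $\mathcal{A}\subseteq\mathcal{G}$, as desired.

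The next step exploits the hidden Markov structure to simplify the left-hand side. Since the $Y_k$ are conditionally independent given $(X_k)$ and the signal is Markov, the coordinates $(X_k,Y_k)_{k\le n}$ are conditionally independent of $(X_k,Y_k)_{k>n}$ given $X_n$. As $W$ depends only on indices $>n$ once $n<-N$, conditioning on $\mathcal{G}\vee\mathcal{H}_n=\sigma(X_k:k\le n)\vee\sigma(Y_k:k\le 0)$ collapses the hidden past to its last coordinate:
\[\mathbf{E}[W\mid\mathcal{G}\vee\mathcal{H}_n]=\mathbf{E}[W\mid X_n,\,(Y_k)_{k\le 0}],\qquad n<-N.\]
The theorem thus reduces to the assertion
\[\mathbf{E}\big[W\mid X_n,(Y_k)_{k\le 0}\big]\ \xrightarrow{\,n\to-\infty\,}\ \mathbf{E}\big[W\mid (Y_k)_{k\le 0}\big]\quad\text{in }L^1,\]
namely that the far-past hidden state $X_n$ becomes asymptotically irrelevant for predicting the recent past once the whole observation record is known. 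Using the same conditional independence one may replace $(Y_k)_{k\le 0}$ by the finite window $(Y_k)_{n<k\le 0}$ on the left, while martingale convergence handles the passage from the finite window to $(Y_k)_{k\le 0}$ on the right; hence it is equivalent to establish \emph{filter stability with respect to the initial condition} over observation windows of growing length.

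To make this precise I would introduce, on each finite window, the reference measure $\mathbf{Q}$ under which the signal $(X_k)$ keeps its law but the observations $(Y_k)$ are i.i.d.\ with law $\varphi$ and independent of the signal. Nondegeneracy gives $\tfrac{d\mathbf{P}}{d\mathbf{Q}}\propto\prod_{k}g(X_k,Y_k)$ on the window, and the Kallianpur--Striebel (Bayes) formula expresses both conditional expectations as ratios of unnormalized filters,
\[\mathbf{E}[W\mid X_n,(Y_k)_{n<k\le 0}]=\frac{\sigma_m^{\delta_{X_n}}(W)}{\sigma_m^{\delta_{X_n}}(\mathbf{1})},\qquad \mathbf{E}[W\mid (Y_k)_{n<k\le 0}]=\frac{\sigma_m^{\mu}(W)}{\sigma_m^{\mu}(\mathbf{1})},\]
where $m=-n$ and $\sigma_m^{\nu}$ is the unnormalized filter over the window started from the initial law $\nu$. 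The goal becomes convergence to zero in $L^1$ of the difference between the normalized filter started at the true state $\delta_{X_n}$ and the one started at the invariant law $\mu$, driven by the same observations, with $X_n\sim\mu$.

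I expect this last step to be the main obstacle, and it is the entire source of the phenomenon studied in this paper. Under $\mathbf{Q}$ the observations carry no information about the signal, so the analogous convergence reduces to triviality of the unconditional past tail $\bigcap_n\mathcal{H}_n$, which does hold because absolute regularity (\ref{eq:harris}) forces this tail to be $\mathbf{P}$-trivial. What one \emph{cannot} do is transfer this triviality naively through the change of measure: the likelihood ratio $\prod_{k\le 0}g(X_k,Y_k)$ over the infinite observation record is not uniformly integrable and $\mathbf{P},\mathbf{Q}$ may become mutually singular in the limit, which is exactly why the naive exchange (\ref{eq:incorr}) fails. The resolution is to use the two hypotheses in tandem: nondegeneracy ($g>0$) makes each Bayesian update a mutually absolutely continuous reweighting, so no observation can ever eliminate a hidden state and the filter's dependence on its initialization is governed purely by the signal dynamics, while absolute regularity supplies the ergodic contraction that drives the two normalized filters together as the window grows. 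Carrying this out quantitatively---controlling the denominators $\sigma_m^{\nu}(\mathbf{1})$ and extracting the requisite uniform integrability from (\ref{eq:harris})---is the technical heart of the argument; once this $L^1$ filter stability is in hand, the reductions above give $\mathcal{A}=\mathcal{G}$ and hence (\ref{eq:exchg}).
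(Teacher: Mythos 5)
First, a caveat about the comparison itself: the paper does not prove Theorem \ref{thm:vh} --- it is imported by citation from \cite{vH09} --- so there is no in-paper proof to measure your argument against, and I am judging the proposal on its own terms and against what the cited argument actually has to accomplish.

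Your reductions are sound and essentially standard: reverse martingale convergence along the decreasing family $\mathcal{G}\vee\mathcal{H}_n$, the collapse of $\mathbf{E}[W\mid\mathcal{G}\vee\mathcal{H}_n]$ to $\mathbf{E}[W\mid X_n,(Y_k)_{n<k\le 0}]$ via the conditional independence structure of the hidden Markov model, the monotone class step, and the Bayes/reference-measure setup are all correct, and they do reduce (\ref{eq:exchg}) to the $L^1$ filter stability statement $\mathbf{E}[W\mid X_n,(Y_k)_{n<k\le 0}]-\mathbf{E}[W\mid(Y_k)_{n<k\le 0}]\to 0$. The problem is that this statement \emph{is} the theorem: everything up to that point is routine bookkeeping, and the step you defer as ``the technical heart'' contains all of the mathematical content. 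The heuristic you offer in its place --- nondegeneracy makes each Bayesian update a mutually absolutely continuous reweighting, so ``the filter's dependence on its initialization is governed purely by the signal dynamics,'' while absolute regularity supplies the contraction --- cannot be promoted to a proof as stated, because it does not use (\ref{eq:harris}) in any way that distinguishes it from mere pure nondeterminism of the signal: a purely nondeterministic signal also forgets its initial condition (its tail is trivial), the observations are still nondegenerate, and the main theorem of this very paper shows that the conclusion then fails. Any correct argument must therefore exploit the total variation convergence in (\ref{eq:harris}) in an essentially quantitative, coupling-theoretic way; in \cite{vH09} this is done by combining an exact coupling representation of absolute regularity with a delicate analysis showing that conditioning on the nondegenerate observation record does not destroy the success of the coupling, and that analysis occupies the bulk of that paper. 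In short, your proposal is a correct reduction together with an accurate diagnosis of where the difficulty lies, but the decisive step is missing, so as a proof it has a genuine gap.
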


This result resolves the validity of (\ref{eq:exchg}) in many cases of 
interest.  Indeed, the mixing assumption (\ref{eq:harris}) holds in a very 
broad class of applications, and a well-established theory provides a 
powerful set of tools to verify this assumption \cite{MT09}.  
Nonetheless, the assumption (\ref{eq:harris}) is strictly stronger than 
the assumption that the hidden process is purely nondeterministic;
the latter is equivalent to
$$
	\mathbf{E}\big[|\mathbf{P}[X_n\in A|X_0]-\mu(A)
	|\big]
	\xrightarrow{n\to\infty}0\quad
	\mbox{for all }A\in\mathcal{B}(E)
$$
(see \cite{Tot70}, Proposition 3).  If, as one might conjecture, 
nondegeneracy of the observations suffices to justify the exchange 
of intersection and supremum (\ref{eq:incorr}), then Theorem \ref{thm:vh} 
should already hold when the hidden process is only purely 
nondeterministic, i.e., Kunita's claim would hold true whenever the 
observations are nondegenerate.  This stronger result was conjectured in 
\cite{vH09}, pp.\ 1877--1878.

\begin{conj}
\label{conj}
If the hidden process is purely nondeterministic and the observations are 
nondegenerate, then (\ref{eq:exchg}) holds true.
\end{conj}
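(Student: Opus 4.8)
The plan is to reduce the conjecture to the exchange identity (\ref{eq:incorr}) and then attack the latter by using nondegeneracy to decouple the observations from the hidden chain. Since $(X_k)_{k\in\mathbb{Z}}$ is purely nondeterministic, the past tail $\mathcal{H}_\infty:=\bigcap_{n\le0}\mathcal{F}^X_{-\infty,n}$ is $\mathbf{P}$-trivial, so once (\ref{eq:incorr}) is known its right-hand side collapses to $\mathcal{F}^Y_{-\infty,0}$ and (\ref{eq:exchg}) follows. Writing $\mathcal{G}:=\mathcal{F}^Y_{-\infty,0}$ and $\mathcal{H}_n:=\mathcal{F}^X_{-\infty,n}$ (a decreasing family as $n\to-\infty$), and noting that the inclusion $\bigcap_{n\le0}(\mathcal{G}\vee\mathcal{H}_n)\supseteq\mathcal{G}\vee\mathcal{H}_\infty$ is automatic, everything comes down to the reverse inclusion: every bounded $\bigcap_{n\le0}(\mathcal{G}\vee\mathcal{H}_n)$-measurable random variable should be $\mathcal{G}\vee\mathcal{H}_\infty$-measurable $\mathbf{P}$-a.s.

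First I would linearize the problem by a Kallianpur--Striebel change of measure. Using the strictly positive density $g$ supplied by nondegeneracy, I would construct a reference measure $\tilde{\mathbf{P}}$ under which $(X_k)_{k\in\mathbb{Z}}$ keeps its original law, $(Y_k)_{k\in\mathbb{Z}}$ is i.i.d.\ with law $\varphi$, and the two processes are \emph{independent}; on each $\sigma$-field $\sigma(X_j,Y_j:m\le j\le n)$ one has $d\mathbf{P}/d\tilde{\mathbf{P}}\propto\prod_j g(X_j,Y_j)$, so $\mathbf{P}$ and $\tilde{\mathbf{P}}$ are locally equivalent because $g>0$. The payoff of this step is that under $\tilde{\mathbf{P}}$ the $\sigma$-field $\mathcal{G}$ is independent of the whole of $\sigma(X_k:k\in\mathbb{Z})$, and in particular of every $\mathcal{H}_n$.

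Next I would prove the exchange \emph{under the reference measure}, where independence makes it true. Since $\mathcal{G}\perp\mathcal{H}_n$ under $\tilde{\mathbf{P}}$, one has the Hilbert-space factorization $L^2(\mathcal{G}\vee\mathcal{H}_n)=L^2(\mathcal{G})\otimes L^2(\mathcal{H}_n)$, and as $L^2(\mathcal{H}_n)\downarrow L^2(\mathcal{H}_\infty)$ the induced projections $\mathrm{id}\otimes P_{\mathcal{H}_n}$ converge strongly to $\mathrm{id}\otimes P_{\mathcal{H}_\infty}$; hence $\bigcap_{n\le0}L^2(\mathcal{G}\vee\mathcal{H}_n)=L^2(\mathcal{G})\otimes L^2(\mathcal{H}_\infty)=L^2(\mathcal{G}\vee\mathcal{H}_\infty)$, which is exactly (\ref{eq:incorr}) $\tilde{\mathbf{P}}$-a.s. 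Equivalently and more concretely: for $Z$ in the intersection, integrating out the independent observations shows that $\tilde{\mathbf{E}}[Z\mid\sigma(X_k:k\in\mathbb{Z})]$ is $\mathcal{H}_n$-measurable for every $n$, hence $\mathcal{H}_\infty$-measurable, after which one upgrades this to measurability of $Z$ itself.

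The \emph{main obstacle}, and the step I expect to be decisive, is transferring this identity from $\tilde{\mathbf{P}}$ back to $\mathbf{P}$. Local equivalence controls each individual $\mathcal{G}\vee\mathcal{H}_n$, but $\bigcap_{n\le0}$ is a tail operation, and $\mathbf{P}$ and $\tilde{\mathbf{P}}$ are typically \emph{mutually singular} on the tail $\sigma$-field because the likelihood ratio $\prod_{k\le n}g(X_k,Y_k)$ has no nondegenerate limit as $n\to-\infty$. Consequently the $\tilde{\mathbf{P}}$-completion and the $\mathbf{P}$-completion of $\bigcap_{n\le0}(\mathcal{G}\vee\mathcal{H}_n)$ need not agree, and the $\tilde{\mathbf{P}}$-a.s.\ exchange does not automatically descend to a $\mathbf{P}$-a.s.\ exchange — precisely the completion-versus-intersection mismatch in which such problems are known to fail. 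Under the stronger hypothesis of Theorem \ref{thm:vh}, the absolute-regularity bound (\ref{eq:harris}) supplies a \emph{uniform} total-variation merging of the conditional laws that is strong enough to bridge this gap. Pure nondeterminism provides only the scalar, set-by-set convergence $\mathbf{E}|\mathbf{P}[X_n\in A\mid X_0]-\mu(A)|\to0$, with no uniformity over the tail-coupled information carried by $\mathcal{G}\vee\mathcal{H}_n$; I would therefore scrutinize this transfer step most carefully, and, failing a way to manufacture the missing uniformity, I would look here for a purely nondeterministic hidden chain whose past tail stays coupled to the nondegenerate observations in a manner invisible to scalar mixing — that is, for a counterexample rather than a proof.
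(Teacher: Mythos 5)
Your proposal does not establish the statement, and no proposal could: Conjecture \ref{conj} is \emph{false}, and refuting it is precisely the main result of this paper (Theorem \ref{thm:result}). There is therefore no proof to compare against; what the paper supplies is exactly the counterexample you ask for in your closing sentence. That said, your diagnosis of where a proof must break is accurate. The Kallianpur--Striebel change of measure does render $\mathcal{F}^Y_{-\infty,0}$ independent of the hidden process under the reference measure $\tilde{\mathbf{P}}$, and the exchange of intersection and supremum does hold there; the irreparable step is transferring the identity for the intersection $\sigma$-field back to $\mathbf{P}$, since local equivalence controls no null sets of a tail $\sigma$-field. This is also why Theorem \ref{thm:vh} needs absolute regularity (\ref{eq:harris}) rather than mere pure nondeterminism: the uniform total-variation merging is what bridges the tail, and the paper's conclusion is that this hypothesis is essentially sharp.

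The ``purely nondeterministic hidden chain whose past tail stays coupled to the nondegenerate observations in a manner invisible to scalar mixing'' that you would go looking for is constructed in sections \ref{sec:construct}--\ref{sec:pf}: a random walk in random scenery. The hidden process records the increments of a simple random walk $(N_k)$ on $\mathbb{Z}$ together with the values of an i.i.d.\ $\{0,1,2\}$-valued scenery $(\xi_k)$ at the walk's current and previous sites; by Meilijson's theorem this process is purely nondeterministic (a $K$-process). The observations $Y_n=e(\eta_{N_n})+\varepsilon\gamma_n$ report the scenery increments along the walk, corrupted by Gaussian noise. Recurrence of the walk gives the model such long memory that the scenery reconstruction theorem of Matzinger and Rolles recovers the entire increment sequence $(\eta_{Ak+B})_k$ from the observations alone, up to an unknown shift $B$ and reflection $A$; matching this against the copy of the scenery carried by the hidden process pins down the random variable $\xi_B$, which is measurable for $\bigcap_n(\mathcal{F}^Y_{0,\infty}\vee\mathcal{F}^X_{n,\infty})$ yet independent of $\mathcal{F}^Y_{0,\infty}$ (it is uniform on $\{0,1,2\}$ given the observations), mirroring the noiseless parity example of section \ref{sec:simplex}. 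Time reversal then yields a stationary hidden Markov model with purely nondeterministic hidden process and nondegenerate observations for which (\ref{eq:exchg}) fails. So your attempt stops exactly where it should; the correct resolution is to withdraw the conjecture, not to prove it.
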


Conjecture \ref{conj} seems tantalizingly close to Theorem \ref{thm:vh}, 
particularly if we rephrase (\ref{eq:harris}) in terms of tail 
$\sigma$-fields.  Indeed, let $\mathbf{P}^x$ be a version of the regular
conditional probability $\mathbf{P}^{X_0}=\mathbf{P}[\,\cdot\,|X_0]$.  
Then, from the results of \cite{vH09}, for example, one may read off the 
following equivalent formulation of (\ref{eq:harris}):
\begin{align*}
	&\mbox{There exists a set }E_0\in\mathcal{B}(E)
	\mbox{ such that }\mu(E_0)=1\mbox{ and}\\
	&
	\mbox{for all }A\in
	\textstyle{\bigcap_{n\le 0}\mathcal{F}_{-\infty,n}^X}
	\mbox{ and }x,y\in E_0
	~~~
	\mathbf{P}^x[A]=\mathbf{P}^y[A]\in\{0,1\}.
\end{align*}
On the other hand, clearly $(X_k)_{k\in\mathbb{Z}}$ is purely 
nondeterministic if and only if
\begin{align*}
	&\mbox{For any }A\in
	\textstyle{\bigcap_{n\le 0}\mathcal{F}_{-\infty,n}^X},
	\mbox{ there exists }E_0\in\mathcal{B}(E)
	\mbox{ (depending possibly}\\
	&\mbox{on }A\mbox{) such that }\mu(E_0)=1
	\mbox{ and for all }x,y\in E_0
	~~~
	\mathbf{P}^x[A]=\mathbf{P}^y[A]\in\{0,1\}.
\end{align*}
Thus the difference between the assumptions is that in the latter, the set 
$E_0$ may depend on $A$, while in the former $E_0$ does not depend on $A$.

\subsection{Main result}

The main result of this paper is that Conjecture \ref{conj} is false.
We establish this by exhibiting a counterexample.

\begin{thm}
\label{thm:result}
There exists a stationary hidden Markov model $(X_k,Y_k)_{k\in\mathbb{Z}}$ 
in a Polish state space $E\times F$ such that the hidden process is purely 
nondeterministic and the observations are nondegenerate, but nonetheless
(\ref{eq:exchg}) fails to hold.  

Moreover, this model may be constructed such that the transition kernel 
$P$ of the hidden process is Feller, and such that the observations are of 
standard additive noise type $Y_n = h(X_n)+\varepsilon\gamma_n$ where 
$h:E\to\mathbb{R}^3$ is a bounded continuous function, $\varepsilon>0$ and 
$(\gamma_k)_{k\in\mathbb{Z}}$ are standard Gaussian random variables in 
$\mathbb{R}^3$.
\end{thm}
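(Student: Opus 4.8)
The plan is to realize the counterexample as a \emph{random walk in random scenery}, the canonical example of a process that is purely nondeterministic yet fails to be absolutely regular; such a process lies exactly in the gap between Theorem~\ref{thm:vh} and Conjecture~\ref{conj}. Indeed, if the mixing hypothesis (\ref{eq:harris}) held for our hidden process, then Theorem~\ref{thm:vh} would force (\ref{eq:exchg}) to hold, so (\ref{eq:harris}) must fail and the only available structure is the strictly weaker $K$-property. Concretely, let $(c_i)_{i\in\mathbb{Z}}$ be i.i.d.\ uniform bits (the scenery), let $(S_n)_{n\in\mathbb{Z}}$ be an independent two-sided simple symmetric random walk on $\mathbb{Z}$ with $S_0=0$, and take as hidden state the \emph{environment viewed from the particle} together with the current increment, $X_n=(\theta^{S_n}c,\,\varepsilon_n)$, where $\varepsilon_n=S_n-S_{n-1}$ and $\theta$ is the shift on $\{0,1\}^{\mathbb{Z}}$. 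Then $(X_k)$ is a stationary Markov chain on the compact Polish space $E=\{0,1\}^{\mathbb{Z}}\times\{-1,+1\}$ whose kernel (shift the scenery by a fresh uniform $\pm1$ step) is Feller. For the observation I would take $h(X_n)=\big(c_{S_n}\oplus c_{S_n+1},\,c_{S_n}\oplus c_{S_n-1},\,\varepsilon_n\big)\in\mathbb{R}^3$, i.e.\ the two scenery increments across the edges adjacent to the current site together with the step direction, and set $Y_n=h(X_n)+\varepsilon\gamma_n$ with $\gamma_n$ standard Gaussian in $\mathbb{R}^3$. Each coordinate of $h$ is bounded and continuous, and additive Gaussian noise makes the observations nondegenerate by inspection; this is the source of the $\mathbb{R}^3$ in the statement.

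The failure of (\ref{eq:exchg}) is engineered through a \emph{reflection parity}, which plays the role of the hidden bit $\xi_0$ in the counterexample of Section~\ref{sec:simplex} but survives the presence of observation noise. The model carries a measure-preserving involution $\iota$ reflecting both scenery and walk, $c_i\mapsto c_{-i}$ and $S_n\mapsto-S_n$ (hence $\varepsilon_n\mapsto-\varepsilon_n$); under $\iota$ the first two coordinates of $h$ are interchanged and the third is negated, so with symmetric noise the law of $(X,Y)$ is preserved and $\mathcal{F}_{-\infty,0}^Y$ is $\iota$-invariant. The key point is that recurrence of $(S_n)$ forces every edge to be visited infinitely often, so by a scenery-reconstruction argument combined with the law of large numbers one can recover from $\mathcal{F}_{-\infty,0}^Y$ the entire increment field $\{c_i\oplus c_{i+1}\}_i$ exactly, albeit only up to a shift and the reflection $\iota$. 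Let $R\in\{0,1\}$ record whether the \emph{true} orientation of this field, read off from the state, agrees with a fixed $\mathcal{F}_{-\infty,0}^Y$-measurable choice of canonical orientation (which exists since the field is almost surely not reflection-invariant).

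I claim $R$ witnesses the failure of (\ref{eq:exchg}). First, $R\notin\mathcal{F}_{-\infty,0}^Y$: since $\iota$ preserves $\mathbf{P}$, preserves $\mathcal{F}_{-\infty,0}^Y$, and satisfies $R\circ\iota=1-R$, a short symmetry argument gives $\mathbf{P}[R=0\mid\mathcal{F}_{-\infty,0}^Y]=1/2$. Second, $R\in\mathcal{F}_{-\infty,0}^Y\vee\mathcal{F}_{-\infty,n}^X$ for \emph{every} $n\le 0$: the single state $X_n=\theta^{S_n}c$ already determines the true oriented increment field $\{c_{S_n+j}\oplus c_{S_n+j+1}\}_j$, so comparing it against the canonically oriented field reconstructed from $\mathcal{F}_{-\infty,0}^Y$ fixes $R$ by an exact matching of two copies of the same bi-infinite sequence, which is almost surely aperiodic and not reflection-invariant. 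Crucially this uses only the scenery field and not the walk's position, so it is insensitive to the fact that $S_n$ is unknown and persists as $n\to-\infty$. Hence $R$ lies in the left-hand intersection of (\ref{eq:exchg}) but not in $\mathcal{F}_{-\infty,0}^Y$, contradicting the identity.

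It remains to verify that the hidden process is purely nondeterministic, i.e.\ that $\bigcap_{n\le 0}\mathcal{F}_{-\infty,n}^X$ is trivial, and I expect this to be the main obstacle. This is precisely the statement that the random walk in random scenery has trivial past tail — the $K$-property of the associated $[T,T^{-1}]$-type system — and establishing it requires a Kalikow-type argument rather than any mixing estimate; indeed the process is deliberately not absolutely regular, so the tools behind Theorem~\ref{thm:vh} are unavailable. A secondary technical point, also nontrivial, is the scenery-reconstruction step used above: one must show that the increment field is exactly $\mathcal{F}_{-\infty,0}^Y$-measurable up to shift and reflection despite the Gaussian observation noise, for which the recurrence-driven law of large numbers over the infinitely many visits to each edge is the essential input. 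Granting these two facts, the construction simultaneously exhibits a purely nondeterministic hidden process with nondegenerate observations for which (\ref{eq:exchg}) fails, which is the assertion of the theorem; the additional Feller and additive-noise properties hold by construction.
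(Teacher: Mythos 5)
Your high-level strategy --- a random walk in random scenery, living exactly in the gap between the $K$-property and absolute regularity, with a scenery-reconstruction step used to beat the observation noise and a hidden ``parity'' surviving into the intersection --- is precisely the strategy of the paper. However, there are genuine gaps in the execution. The most serious one is the mechanism by which you argue that your witness $R$ (the reflection parity) is not $\mathcal{F}^Y_{-\infty,0}$-measurable. Your involution $\iota$ does not fix $\mathcal{F}^Y_{-\infty,0}$ \emph{pointwise}: it permutes the first two coordinates of $Y_n$ and negates the third, so it only maps the $\sigma$-field onto itself. The symmetry then yields $\mathbf{E}[R\mid\mathcal{F}^Y_{-\infty,0}]\circ\iota=1-\mathbf{E}[R\mid\mathcal{F}^Y_{-\infty,0}]$, i.e.\ that $\mathbf{E}[R\mid\mathcal{F}^Y_{-\infty,0}]$ and $1-\mathbf{E}[R\mid\mathcal{F}^Y_{-\infty,0}]$ have the same law --- which is perfectly consistent with $R$ being $\mathcal{F}^Y_{-\infty,0}$-measurable (take $g=\mathbf{1}_A$ with $\iota(A)=A^c$). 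Worse, your design actively invites this failure: you put the walk increment $\varepsilon_n$ into the observation. If the walk were exactly recoverable, the increment field could be reconstructed \emph{positioned and oriented} by averaging over the (then known) visit times to each edge, and $R$ would be observable; with small noise the walk is recoverable up to sparse errors, and nothing in your argument excludes that these can be corrected. The paper avoids this entirely by never observing the walk, and --- crucially --- by choosing as witness not the reflection/shift parameters themselves (which by Kesten's Remark (ii) in \cite{Kes98} are \emph{not} $\mathcal{F}^Y$-measurable, but for reasons that are not elementary) but the value $\xi_B$ of an \emph{independent} scenery layer at the random reconstruction offset $B$; independence of $\xi_0$ from everything generating $Y$ makes $\mathbf{P}[\xi_B=i\mid\mathcal{F}^Y]=1/3$ an exact one-line computation. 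Your construction has no such independent layer, so the ``not measurable'' half of the argument is not established.

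The two steps you flag as the main obstacles are indeed the deep inputs, but your sketches of them would not survive scrutiny. Scenery reconstruction with observation errors is \emph{not} ``a law of large numbers over the infinitely many visits to each edge'': the entire difficulty is that the visit times are unknown because the walk is unobserved (or, in your model, only noisily observed, with a.s.\ infinitely many errors in the increments). This is the content of the Matzinger--Rolles theorem \cite{MR03}, which the paper invokes for an i.i.d.\ scenery on a \emph{three}-letter alphabet; your binary increment field $\{c_i\oplus c_{i+1}\}$ is not covered by that result, and two-color reconstruction with errors is a separate and harder problem. Likewise, triviality of the past tail of the RWRS is Meilijson's theorem \cite{Mei74} (Kalikow's contribution is the non-Bernoullicity, which is not what is needed here), and some care is required to phrase the Markovization so that the tail of the Markov state process coincides with the tail of the finite-valued RWRS process to which that theorem applies; the paper does this by putting the future local observations $(Z_k)_{k\ge n}$ into the state and then time-reversing, at the cost of extra work (restriction to a suitable Polish subspace) to recover the Feller property, which your ``environment viewed from the particle'' formulation gets for free. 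In short: right blueprint, but the observation model and the choice of witness need to be redesigned along the lines of the paper before the symmetry and reconstruction steps can be made rigorous.
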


The counterexample to Conjecture \ref{conj}, whose existence is guaranteed 
by this result, must surely yield a nasty filtering problem!  Yet, Theorem 
\ref{thm:result} indicates the model need not even be \emph{too} nasty: 
the example can be chosen to satisfy standard regularity assumptions and 
using a perfectly ordinary observation model.  It therefore seems doubtful 
that the general result of Theorem \ref{thm:vh} can be substantially 
weakened; absolute regularity (\ref{eq:harris}) is evidently essential.

Let us briefly explain the intuition behind the counterexample.  We aim to 
mimic the noiseless counterexample in section \ref{sec:simplex}.  The 
idea is to construct a variant of that model which has very long memory: 
we can then hope to average out the additional observation noise (needed 
to make the observations nondegenerate), reverting essentially to the 
noiseless case.  On the other hand, we cannot give the process such long 
memory that it ceases to be purely nondeterministic.  The following 
construction strikes a balance between these competing goals.  We 
reconsider the example of section \ref{sec:simplex} not as a time series, 
but as a random scenery.  We then construct a stochastic process by 
running a random walk on the integers, and reporting at each time the 
value of the scenery at the current location of the walk.  The resulting 
\emph{random walk in random scenery} \cite{HS06,Kes98} is purely 
nondeterministic, yet has a very long memory due to the recurrence of the 
random walk.  The latter is exploited by a remarkable \emph{scenery 
reconstruction} result of Matzinger and Rolles \cite{MR03} which allows us 
to average out the observation noise.  Theorem \ref{thm:result} follows 
essentially by combining the scenery reconstruction with the example of 
section \ref{sec:simplex}, except that we must work in a slightly larger 
state space for technical reasons.

\begin{rem}
Random walks in random scenery are closely related to the 
$T,T^{-1}$-process, which was conjectured by Weiss (\cite{Wei72}, p.\ 682) 
and later proved by Kalikow \cite{Kal82} to be a natural example of a 
$K$-process that is not a $B$-process.  In the language of ergodic theory, 
a purely nondeterministic process is a $K$-process \cite{Orn73} while a 
process that satisfies (\ref{eq:exchg}) is an
$\mathcal{F}_{-\infty,0}^Y$-relative $K$-process \cite{Rah78}.  Our example 
may thus be interpreted as a $K$-process that is not $K$ relative to a 
nondegenerate observation process.  The absolute regularity property
(\ref{eq:harris}) is equivalent to the weak Bernoulli property in ergodic
theory (cf.\ \cite{VR59}).
\end{rem}

We end this section with a brief discussion of the practical implications 
of Theorem \ref{thm:result}.  The mixing assumption (\ref{eq:harris}) 
required by Theorem \ref{thm:vh} states that the law of the hidden process 
converges in the sense of total variation to the invariant measure $\mu$ 
for almost every initial condition.  This occurs in a wide variety of 
applications \cite{MT09}, as long as the hidden state space $E$ is finite 
dimensional.  In infinite dimensions, however, most probability measures 
are mutually singular and total variation convergence is rare.  When the 
hidden process is defined by the solution of a stochastic partial 
differential equation, for example, typically the best we can hope for is 
weak convergence to the invariant measure.  In this case (\ref{eq:harris}) 
fails, though the process is still purely nondeterministic.  Our main 
result indicates that nice ergodic properties of the nonlinear filter 
cannot be taken for granted in the infinite dimensional setting. This is 
unfortunate, as infinite dimensional filtering problems appear naturally 
in important applications such as weather prediction and geophysical or 
oceanographic data assimilation (see, e.g., \cite{Maj10}), while 
ergodicity of the nonlinear filter is essential to reliable performance 
of filtering algorithms \cite{vHipf}.  The current state of knowledge on 
the ergodic theory of infinite dimensional filtering problems appears to 
be essentially nonexistent.

The remainder of this paper is organized as follows.  In section 
\ref{sec:construct} we introduce the various stochastic processes needed 
to construct our counterexample.  Sections \ref{sec:pf} and \ref{sec:pf2} 
are devoted to the proof of Theorem \ref{thm:result}.  The appendix 
reviews the ergodic theory of nonlinear filters (including a proof of 
Theorem \ref{thm:exchg}).

\section{Construction}
\label{sec:construct}

In the following, we will work on the canonical probability space
$(\Omega,\mathcal{F},\mathbf{P})$ which supports the following 
independent random variables.
\begin{itemize}
\item $(\eta_k)_{k\in\mathbb{Z}}$, $\xi_0$ are i.i.d.\ random 
variables, uniformly distributed in $\{0,1,2\}$.
\item $(\delta_k)_{k\in\mathbb{Z}}$ are i.i.d.\ random variables,
uniformly distributed in $\{-1,1\}$.
\item $(\gamma_k)_{k\in\mathbb{Z}}$ are i.i.d.\ standard Gaussian random 
variables in $\mathbb{R}^3$.
\end{itemize}
Denote by $\{e(0),e(1),e(2)\}\subset\mathbb{R}^3$ the canonical basis in 
$\mathbb{R}^3$.

We now proceed to define various stochastic processes.  Define recursively
\begin{equation*}
	\xi_n = \left\{
	\begin{array}{ll}
	(\xi_{n-1} + \eta_n) \mathop{\mathrm{mod}} 3 &\quad\mbox{for }n>0,\\
	(\xi_{n+1} - \eta_{n+1}) \mathop{\mathrm{mod}} 3 &\quad\mbox{for }n<0.\\
	\end{array}\right.
\end{equation*}
Note that $(\xi_k)_{k\in\mathbb{Z}}$ is an i.i.d.\ sequence uniformly 
distributed in $\{0,1,2\}$, and
$$
	\eta_n = (\xi_n - \xi_{n-1}) \mathop{\mathrm{mod}} 3.
$$
Next, we define the simple random walk $(N_k)_{k\in\mathbb{Z}}$ on 
$\mathbb{Z}$ as
\begin{equation*}
	N_n = \left\{
	\begin{array}{ll}
	\sum_{k=1}^n\delta_k &\quad\mbox{for }n\ge 0,\\
	-\sum_{k=n+1}^{0}\delta_k &\quad\mbox{for }n<0.
	\end{array}
	\right.
\end{equation*}
We can now define the random walk in random scenery 
$(Z_k)_{k\in\mathbb{Z}}$ which takes values in the set
$\{-1,1\}\times\{0,1,2\}\times\{0,1,2\}:= I$ as follows:
$$
	Z_n = (Z_{n,0},Z_{n,1},Z_{n,2}) =
	(\delta_{n+1},\xi_{N_n - 1},\xi_{N_n}).
$$
It is not difficult to see that $(Z_n)_{n\in\mathbb{Z}}$ is a 
stationary process.  We finally make the process Markovian by defining
the $I^{\mathbb{Z}_+}$-valued process $(X_n)_{n\in\mathbb{Z}}$ as
$$
	X_n = (Z_k)_{k\ge n}\qquad
	\mbox{(that is, }X_{n,k} = Z_{n+k}\mbox{ for }k\in\mathbb{Z}_+
	\mbox{)},
$$
and we define the $\mathbb{R}^3$-valued observation process 
$(Y_k)_{k\in\mathbb{Z}}$ as
$$
	Y_n = h(X_n)+\varepsilon\gamma_n =
	e(\eta_{N_n})+\varepsilon\gamma_n,
$$
where $\varepsilon>0$ is a fixed constant and
$h:I^{\mathbb{Z}_+}\to\mathbb{R}^3$ is defined as
$$
	h(x)=e((x_{0,2}-x_{0,1})\mathop{\mathrm{mod}} 3).
$$
It is evident that the pair $(X_n,Y_n)_{n\in\mathbb{Z}}$ defines a 
stationary hidden Markov model taking values in the Polish space
$I^{\mathbb{Z}_+}\times\mathbb{R}^3$ and with nondegenerate observations.

Let us define the $\sigma$-fields
$$
	\mathcal{F}_{m,n}^X = \sigma\{X_k:k\in[m,n]\},\qquad\quad
	\mathcal{F}_{m,n}^Y = \sigma\{Y_k:k\in[m,n]\},
$$
for $m,n\in\mathbb{Z}$, $m\le n$.  The $\sigma$-fields 
$\mathcal{F}_{-\infty,n}^X$, $\mathcal{F}_{m,\infty}^X$, etc., are defined 
in the usual fashion (for example, $\mathcal{F}_{-\infty,n}^X=
\bigvee_{m\le n}\mathcal{F}_{m,n}^X$).  Our main result is now as follows.

\begin{thm}
\label{thm:mainres}
For the hidden Markov model $(X_k,Y_k)_{k\in\mathbb{Z}}$ with 
nondegenerate observations, as defined in this section, the following
hold:
\begin{enumerate}
\item The future tail $\sigma$-field 
$$
	\mathcal{T}:=\bigcap_{n\ge 0}\mathcal{F}^X_{n,\infty}
	\quad\mbox{is }\mathbf{P}\mbox{-a.s.\ trivial}.
$$
\item We have the strict inclusion
$$
	\bigcap_{n\ge 0}(\mathcal{F}^Y_{0,\infty}\vee\mathcal{F}^X_{n,\infty})
	\supsetneq \mathcal{F}^Y_{0,\infty}
	\quad\mathbf{P}\mbox{-a.s.},
$$
provided that $\varepsilon>0$ is chosen sufficiently small.
\end{enumerate}
\end{thm}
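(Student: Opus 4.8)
The plan is to translate both statements into properties of the random walk in random scenery $(Z_k)_{k\in\mathbb{Z}}$. Since $X_n=(Z_k)_{k\ge n}$ already determines $X_{n+1},X_{n+2},\dots$, one has $\mathcal{F}^X_{n,\infty}=\sigma\{Z_k:k\ge n\}$, so that $\mathcal{T}$ is precisely the future tail $\sigma$-field of $(Z_k)$. For part (1) I would condition on the entire scenery $\Xi:=\sigma\{\xi_i:i\in\mathbb{Z}\}$. Given $\Xi$, the only remaining randomness is the walk; because the scenery is i.i.d.\ (hence a.s.\ aperiodic) while the simple random walk is recurrent, the future increments $(\delta_j)_{j>n}$ together with the recorded scenery values $\{\xi_{N_k-1},\xi_{N_k}:k\ge n\}$ recover the whole profile $(\xi_{N_n+j})_{j\in\mathbb{Z}}$ and hence the position $N_n$. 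Thus, conditionally on $\Xi$, the field $\sigma\{Z_k:k\ge n\}$ collapses to $\sigma\{N_k:k\ge n\}$, and intersecting over $n$ turns any $A\in\mathcal{T}$ into a tail event of the recurrent Markov chain $(N_k)$, whose tail $\sigma$-field is trivial. Hence every $A\in\mathcal{T}$ is a.s.\ $\Xi$-measurable, and a short shift/ergodicity argument (or directly the known fact that a random walk in random scenery is purely nondeterministic, see \cite{HS06,Kes98}) forces $\mathbf{P}[A]\in\{0,1\}$.

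For part (2) I would exhibit the single random variable $\xi_0$ as a witness lying in every $\mathcal{F}^Y_{0,\infty}\vee\mathcal{F}^X_{n,\infty}$ but not in $\mathcal{F}^Y_{0,\infty}$. The easy half is non-measurability. The simultaneous relabelling $\xi_i\mapsto(\xi_i+c)\bmod 3$ at all sites leaves every increment $\eta_j=(\xi_j-\xi_{j-1})\bmod 3$, and therefore every observation $Y_k=e(\eta_{N_k})+\varepsilon\gamma_k$, unchanged, while acting transitively on the value of $\xi_0$. Consequently the conditional law of $\xi_0$ given $\mathcal{F}^Y_{0,\infty}$ is invariant under this $\mathbb{Z}/3\mathbb{Z}$-action, hence uniform, so $\xi_0$ is not $\mathcal{F}^Y_{0,\infty}$-measurable.

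The substance of part (2) is to show that $\xi_0$ is recoverable from $\mathcal{F}^Y_{0,\infty}\vee\mathcal{F}^X_{n,\infty}$ for each fixed $n\ge 0$. From $\mathcal{F}^X_{n,\infty}$ I read off the true future trajectory $(N_k-N_n)_{k\ge n}$ (from the coordinates $Z_{k,0}=\delta_{k+1}$) and, by recurrence of the future walk, the actual scenery \emph{values} $(\xi_{N_n+j})_{j\in\mathbb{Z}}$ relative to the current position (from the coordinates $Z_{k,1},Z_{k,2}$). What is missing is only the offset $N_n$ needed to locate the origin, since $\xi_0=\xi_{N_n+(-N_n)}$. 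To pin down this offset I would invoke the scenery reconstruction theorem of Matzinger and Rolles \cite{MR03}: for $\varepsilon>0$ sufficiently small the noisy color record $(Y_k)_{k\ge 0}$ determines the increment scenery together with the walk path from time $0$, up to a single global shift and reflection. Matching this reconstruction against the genuinely oriented, absolutely valued data supplied by $\mathcal{F}^X_{n,\infty}$ on the overlap $k\ge n$ removes the shift/reflection ambiguity, thereby locating the time-$0$ position $N_0=0$ relative to $N_n$ and yielding the value $\xi_0$. Combining the two halves gives $\xi_0\in\bigcap_{n\ge 0}(\mathcal{F}^Y_{0,\infty}\vee\mathcal{F}^X_{n,\infty})\setminus\mathcal{F}^Y_{0,\infty}$, which is the asserted strict inclusion.

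The main obstacle is this last step: correctly importing \cite{MR03} in the present coding (a three-symbol scenery embedded via the orthonormal vectors $e(0),e(1),e(2)$ with Gaussian noise, which is where the requirement that $\varepsilon$ be small enters) and extracting from it not merely the scenery but the walk path anchored at time $0$, so that the alignment with $\mathcal{F}^X_{n,\infty}$ unambiguously fixes $N_n$. I expect the bookkeeping that guarantees the reconstruction and the $\mathcal{F}^X_{n,\infty}$-data refer to the same shift- and reflection-normalised frame to be the delicate part; the remaining measurability and symmetry arguments are routine.
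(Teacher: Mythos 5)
The central step of your part (2) does not go through with the witness $\xi_0$. Fix $n\ge 1$. From $\mathcal{F}^X_{n,\infty}$ you recover the scenery only in coordinates \emph{relative} to the unknown position $N_n$: you know the sequence $u_j:=\xi_{N_n+j}$, but not $N_n$ itself, since $\mathcal{F}^X_{n,\infty}$ contains only the increments $\delta_{n+1},\delta_{n+2},\ldots$ and the scenery values read along the future path. The Matzinger--Rolles output is likewise defined only up to an unknown shift and reflection, $\iota((Y'_k)_{k\ge 0})_j=\eta_{Aj+B}$, and by \cite{Kes98}, Remark (ii), the anchoring variables $A,B$ cannot be made $\mathcal{F}^Y_{0,\infty}$-measurable; moreover \cite{MR03} reconstructs the scenery, not the walk path anchored at time $0$, so the stronger statement you invoke is not available. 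Aligning the two reconstructions, as you propose, therefore determines only the \emph{relative} offset between the two frames, namely $A$ and $B-N_n$; it does not ``locate the time-$0$ position,'' which would require knowing $N_n$ (equivalently $B$) absolutely. And $N_n$ is genuinely undetermined: conditionally on $\mathcal{F}^Y_{0,\infty}\vee\mathcal{F}^X_{n,\infty}$, the only information about the initial segment $N_0,\ldots,N_{n}$ comes from the finitely many noisy readings $Y_0,\ldots,Y_{n-1}$, whose Gaussian likelihood is strictly positive for every admissible placement of that segment in the known relative scenery, so the conditional law of $N_n$ is nondegenerate and $\xi_0=u_{-N_n}$ is not a.s.\ determined --- already for a single $n\ge1$, hence certainly not in the intersection. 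The paper's proof circumvents exactly this obstruction by taking as witness not $\xi_0$ but $\xi_B$: since the alignment \emph{does} determine $B-N_n$, the value $\xi_B=u_{B-N_n}$ is $\mathcal{F}^Y_{0,\infty}\vee\mathcal{F}^X_{n,\infty}$-measurable, while independence of $\xi_0$ from $(\eta_k,\delta_k,\gamma_k)_{k\in\mathbb{Z}}$ still yields $\mathbf{P}[\xi_B=i\,|\,\mathcal{F}^Y_{0,\infty}]=1/3$. Your $\mathbb{Z}/3\mathbb{Z}$-symmetry argument for non-measurability is correct, but that is the easy half.

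In part (1), your direct sketch commits precisely the operation this paper is about: passing from $\mathcal{T}\subset\bigcap_n\big(\Xi\vee\sigma\{N_k:k\ge n\}\big)$ to ``$\mathcal{T}$ is a tail event of $(N_k)$ given $\Xi$'' is an exchange of intersection and supremum. It happens to be repairable here because $\Xi$ is independent of the walk (on a product space the exchange is valid), but even then you only obtain that $\mathcal{T}$ is contained in the completion of $\Xi$, and showing that a tail event of $(Z_k)$ which is a.s.\ scenery-measurable must be trivial is essentially the content of Meilijson's theorem; the ``short shift/ergodicity argument'' is not supplied (note that, e.g., $\xi_0$ is scenery-measurable yet not determined by $\mathcal{F}^Z_{n,\infty}$, for the same reason as above). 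Your fallback of simply citing the known fact that a recurrent random walk in an ergodic random scenery is purely nondeterministic is legitimate and is what the paper does via \cite{Mei74}, so part (1) can stand on the citation; but the direct argument as written should not be trusted.
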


The proof of this result, given in section \ref{sec:pf} below, is based on 
mixing and reconstruction results for random walks in random scenery
\cite{Mei74,MR03}.

The model of Theorem \ref{thm:mainres} is time-reversed from the 
counterexample to be provided by Theorem \ref{thm:result}.  It is 
immediate from the Markov property, however, that the time reversal 
of a stationary hidden Markov model yields again a stationary hidden 
Markov model.  Therefore, the following corollary is immediate:

\begin{cor}
\label{cor:preresult}
For $\varepsilon>0$ sufficiently small, the time-reversed model 
$$
	(\tilde X_k,\tilde 
	Y_k)_{k\in\mathbb{Z}}:=(X_{-k},Y_{-k})_{k\in\mathbb{Z}}
$$
is purely nondeterministic and has nondegenerate observations,
but (\ref{eq:exchg}) fails.
\end{cor}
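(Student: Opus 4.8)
The plan is to obtain the corollary as a direct translation of Theorem \ref{thm:mainres} under time reversal; no new probabilistic content is required, only careful bookkeeping of $\sigma$-fields together with the observation that the hidden Markov structure is symmetric in time. As already noted in the preceding remark, the reversal $(\tilde X_k,\tilde Y_k)_{k\in\mathbb{Z}}$ of a stationary hidden Markov model is again a stationary hidden Markov model; the reversed hidden chain $(\tilde X_k)=(X_{-k})$ is Markov with some reversed kernel $\tilde P$, which exists as a genuine transition kernel because $I^{\mathbb{Z}_+}$ is Polish. So the only structural point that still needs checking is that \emph{nondegeneracy} of the observations is inherited. This holds because the conditional-independence structure depicted in Figure \ref{fig:hmm} does not refer to the direction of time: conditionally on $X_n$, the observation $Y_n$ is independent of all other variables with law $\Phi(X_n,\cdot)$. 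Hence the reversed model has observation kernel $\tilde\Phi=\Phi$, and in particular the same reference measure $\varphi$ and density $g$ witness nondegeneracy. Equivalently, one sees directly that $\tilde Y_n=Y_{-n}=h(X_{-n})+\varepsilon\gamma_{-n}=h(\tilde X_n)+\varepsilon\tilde\gamma_n$ is of the same additive Gaussian type.

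Next I would identify the relevant tail $\sigma$-fields of the reversed model in terms of the original process. Writing $\mathcal{F}^{\tilde X}_{-\infty,n}=\sigma\{\tilde X_k:k\le n\}=\sigma\{X_j:j\ge -n\}=\mathcal{F}^X_{-n,\infty}$ and likewise $\mathcal{F}^{\tilde Y}_{-\infty,0}=\mathcal{F}^Y_{0,\infty}$, the substitution $m=-n$ yields
$$
	\bigcap_{n\le 0}\mathcal{F}^{\tilde X}_{-\infty,n}
	= \bigcap_{m\ge 0}\mathcal{F}^X_{m,\infty} = \mathcal{T}.
$$
By part (1) of Theorem \ref{thm:mainres} this tail field is $\mathbf{P}$-a.s.\ trivial, so the reversed hidden process $(\tilde X_k)$ is purely nondeterministic.

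Finally, I would write the identity (\ref{eq:exchg}) for the reversed model and reindex by the same $m=-n$. Using the two identifications above, its left- and right-hand sides become
$$
	\bigcap_{n\le 0}\big(\mathcal{F}^{\tilde Y}_{-\infty,0}\vee
	\mathcal{F}^{\tilde X}_{-\infty,n}\big)
	= \bigcap_{m\ge 0}\big(\mathcal{F}^Y_{0,\infty}\vee
	\mathcal{F}^X_{m,\infty}\big),
	\qquad
	\mathcal{F}^{\tilde Y}_{-\infty,0}=\mathcal{F}^Y_{0,\infty}.
$$
Part (2) of Theorem \ref{thm:mainres} states precisely that, for $\varepsilon>0$ small, the left-hand side strictly contains $\mathcal{F}^Y_{0,\infty}$; hence the two sides differ and (\ref{eq:exchg}) fails for the reversed model. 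The only genuine subtlety, and the one place I would take care, is the invariance of the observation kernel under reversal in the first step, since this is exactly what guarantees that nondegeneracy passes to $(\tilde X_k,\tilde Y_k)$; everything else is mechanical re-indexing of the half-line $\sigma$-fields.
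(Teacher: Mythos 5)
Your proposal is correct and follows exactly the route the paper takes: the paper declares the corollary ``immediate'' from Theorem \ref{thm:mainres} once one notes that time reversal of a stationary hidden Markov model is again a stationary hidden Markov model, and your re-indexing of the $\sigma$-fields together with the observation that the kernel $\Phi$ (hence nondegeneracy) is unchanged under reversal is precisely the bookkeeping being left implicit there.
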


This proves the first part of Theorem \ref{thm:result} and settles 
Conjecture \ref{conj}.  However, when constructed in this manner, the 
transition kernel of $(\tilde X_k)_{k\in\mathbb{Z}}$ cannot be chosen to 
satisfy the Feller property on $I^{\mathbb{Z}_+}$.  Some further effort is 
therefore required to complete the proof of Theorem \ref{thm:result}, 
which we postpone to section \ref{sec:pf2}.

\section{Proof of Theorem \ref{thm:mainres}}
\label{sec:pf}

\subsection{First part}

Consider the stochastic process $\tilde\xi_n:=(\xi_{n-1},\xi_n)$. It is 
easily seen that this is a stationary, irreducible and aperiodic Markov 
chain taking values in the space $\{0,1,2\}\times\{0,1,2\}$, so that 
$(\tilde\xi_k)_{k\in\mathbb{Z}}$ is an ergodic process.  The triviality of 
$\mathcal{T}$ now follows from the Theorem in \cite{Mei74}, p.\ 267 (this 
follows in particular from equation (3) in \cite{Mei74} using 
\cite{Smo71}, Theorem 7.9).

\subsection{Second part}

Consider the modified observation process $(Y_k')_{k\in\mathbb{Z}}$
taking values in $\{0,1,2\}$, defined as follows:
$$
	Y_n' = \mathop{\mathrm{argmax}}_{i=0,1,2} Y_{n,i}.
$$
That is, $Y_n'$ is the coordinate index of the largest component of
the vector $Y_n\in\mathbb{R}^3$.  By symmetry, it is easily seen that
for some $\delta>0$ depending on $\varepsilon$
$$
	\mathbf{P}[Y_n'=i|\eta_{N_n}=j] = 
	\frac{\delta}{3}\quad\forall\,i\ne j,
	\qquad
	\mathbf{P}[Y_n'=i|\eta_{N_n}=i] = 1-\frac{2\delta}{3}
	\quad\forall i,
$$
where $\delta\downarrow 0$ as $\varepsilon\downarrow 0$.  The conditional 
law of $Y_n'$ can therefore be generated as follows: draw a Bernoulli 
random variable with parameter $\delta$; if it is zero, set 
$Y_n'=\eta_{N_n}$, otherwise let $Y_n'$ be a random draw from the uniform 
distribution on $\{0,1,2\}$.  We can now apply the 
scenery reconstruction result from \cite{MR03}.

\begin{defn}
Let $x,y\in\{0,1,2\}^{\mathbb{Z}}$.  We write $x\approx y$ if
there exist $a\in\{-1,1\}$ and $b\in\mathbb{Z}$ such that
$x_n=y_{an+b}$ for all $n\in\mathbb{Z}$ (that is, $x\approx y$ iff
the sequences $x$ and $y$ agree up to translation and/or reflection).
\end{defn}

\begin{thm}[\cite{MR03}] \label{thm:mr}
There is a measurable map 
$\iota:\{0,1,2\}^{\mathbb{Z}_+}\to\{0,1,2\}^{\mathbb{Z}}$
such that $\mathbf{P}[\iota((Y_k')_{k\ge 0})\approx 
(\eta_k)_{k\in\mathbb{Z}}]=1$ provided $\varepsilon>0$ is sufficiently 
small.
\end{thm}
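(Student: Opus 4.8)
The plan is to recognize Theorem~\ref{thm:mr} as an instance of the scenery reconstruction problem and to obtain it directly from the reconstruction theorem of \cite{MR03}, the entire substance of which we import. The first step is to check that $(Y_k')_{k\ge 0}$ is genuinely an observation of the i.i.d.\ scenery $(\eta_k)_{k\in\mathbb{Z}}$ read along the walk $(N_k)$ through the error channel treated in \cite{MR03}. This is exactly what the representation established just above the statement provides: conditionally on the walk, $Y_n'$ equals the true color $\eta_{N_n}$ with probability $1-\delta$ and is an independent uniform draw on $\{0,1,2\}$ with probability $\delta$, and these corruptions are conditionally independent across $n$ because the $\gamma_n$ are i.i.d.\ and independent of $(\eta_k,N_k)$. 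Thus $(Y_k')$ is precisely a three-color i.i.d.\ scenery seen along the recurrent simple random walk $N_n=\sum_{k=1}^n\delta_k$ through an i.i.d.\ ``replace-by-uniform'' noise channel. Since the one-dimensional walk is recurrent, the forward path $(N_k)_{k\ge 0}$ alone visits every site (positive and negative) infinitely often, so the one-sided observations $(Y_k')_{k\ge 0}$ already sample the whole scenery; this is what makes one-sided reconstruction feasible.

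The second step is to match the quantitative hypothesis of \cite{MR03}: their reconstruction holds once the corruption probability is below a threshold determined by the number of colors, here three. We have already shown that $\delta\downarrow 0$ as $\varepsilon\downarrow 0$, so it suffices to fix $\varepsilon>0$ small enough that $\delta$ lies below this threshold. The measurable map $\iota$ and the almost-sure identity $\iota((Y_k')_{k\ge 0})\approx(\eta_k)_{k\in\mathbb{Z}}$ are then furnished verbatim by \cite{MR03}. The appearance of the equivalence relation $\approx$, rather than literal equality, is unavoidable: the joint law of the observations is invariant under simultaneously translating and reflecting the scenery and the walk, so no measurable function of $(Y_k')_{k\ge 0}$ can distinguish a scenery from its shifts and reflection, and the $\approx$-class is the best one can hope to recover.

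The genuinely hard content --- the explicit construction of $\iota$ and the proof that it recovers the scenery almost surely despite the noise --- lies entirely in \cite{MR03} and is not reproved here. The main obstacle in our own argument is therefore not a deep estimate but a verification: one must confirm that our specific model (a three-color i.i.d.\ scenery, the symmetric walk $N$, and the particular noise induced on $Y_n'$ by passing the Gaussian observations $Y_n$ through the argmax map) really does fall within the framework of \cite{MR03}, and in particular that the induced error channel is of the conditionally i.i.d.\ uniform-corruption type with a corruption probability that can be made arbitrarily small. Once this identification is in place, together with the choice of $\varepsilon$ making $\delta$ subcritical, the theorem follows immediately.
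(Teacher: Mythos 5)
Your proposal is correct and follows essentially the same route as the paper: the paper likewise does not prove this theorem but imports it wholesale from \cite{MR03}, after verifying (in the paragraph preceding the statement) exactly the point you verify, namely that the argmax observations $(Y_k')$ are the i.i.d.\ three-color scenery $(\eta_k)$ read along the recurrent walk through a conditionally i.i.d.\ uniform-corruption channel with corruption probability $\delta\downarrow 0$ as $\varepsilon\downarrow 0$. Your additional remarks on recurrence and on the unavoidability of the equivalence $\approx$ are accurate and consistent with the paper's surrounding discussion.
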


From now on, let us fix $\varepsilon>0$ sufficiently small and the map 
$\iota$ as in Theorem \ref{thm:mr}.  By the definition of the equivalence 
relation $\approx$, there exist
$\mathcal{F}_{0,\infty}^Y\vee\mathcal{F}_{-\infty,\infty}^{\eta}$-measurable 
random variables $A$ and $B$, taking values in $\{-1,1\}$ and 
$\mathbb{Z}$, respectively, such that $\iota((Y_k')_{k\ge 0})_n=
\eta_{An+B}$ $\mathbf{P}$-a.s.\ for all $n\in\mathbb{Z}$.

\begin{rem}
Let us note that, even though by construction 
$(\eta_{Ak+B})_{k\in\mathbb{Z}}$ is a.s.\ 
$\mathcal{F}_{0,\infty}^Y$-measurable, it is not possible for the random 
variables $A$ and $B$ to be $\mathcal{F}_{0,\infty}^Y$-measurable; see 
\cite{Kes98}, Remark (ii).  This will not be a problem for us.
\end{rem}

The point of the above construction is the following claim: the random 
variable $\xi_B$ is a.s.\ $\bigcap_n (\mathcal{F}_{0,\infty}^Y\vee
\mathcal{F}_{n,\infty}^X)$-measurable, but it is not a.s.\ 
$\mathcal{F}_{0,\infty}^Y$-measurable.  This clearly suffices to prove 
the result.  It thus remains to establish the claim.

\begin{lem}
The random variable $\xi_B$ is $\mathbf{P}$-a.s.\ 
$\bigcap_n (\mathcal{F}_{0,\infty}^Y\vee
\mathcal{F}_{n,\infty}^X)$-measurable.
\end{lem}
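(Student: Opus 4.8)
The plan is to mimic, in reverse time, the noiseless counterexample of Section~\ref{sec:simplex}: for each fixed $n\ge 0$ I will recover $\xi_B$ from the future observations together with the hidden tail $\mathcal{F}_{n,\infty}^X$, exploiting the recurrence of the random walk. The two ingredients are the reconstructed increment scenery $(\eta_{Ak+B})_{k\in\mathbb{Z}}$, which by construction equals $\iota((Y_k')_{k\ge0})$ and is therefore $\mathcal{F}_{0,\infty}^Y$-measurable, and the scenery as seen from the walk's position at time $n$, which I will extract from $\mathcal{F}_{n,\infty}^X$.

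First I would fix $n\ge 0$ and unwind the tail $\sigma$-field. Since $X_n=(Z_k)_{k\ge n}$, we have $\mathcal{F}_{n,\infty}^X=\sigma\{Z_k:k\ge n\}$, and from the components $Z_{k,0}=\delta_{k+1}$, $k\ge n$, one reads off the relative trajectory $N_k-N_n$ for all $k\ge n$. Because the simple random walk started afresh at time $n$ is recurrent, $\{N_k-N_n:k\ge n\}=\mathbb{Z}$ almost surely, so from the observed pairs $(\xi_{N_k-1},\xi_{N_k})=(Z_{k,1},Z_{k,2})$ one recovers the whole scenery relative to the origin $N_n$; that is, the sequences $j\mapsto\xi_{N_n+j}$ and hence $j\mapsto\eta_{N_n+j}$ are $\mathcal{F}_{n,\infty}^X$-measurable. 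The essential point is that $N_n$ itself is \emph{not} $\mathcal{F}_{n,\infty}^X$-measurable, so only the scenery up to an unknown shift is available from the hidden tail.

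Next I would align the two reconstructions. Both $(\eta_{Ak+B})_{k}$, obtained from $\mathcal{F}_{0,\infty}^Y$, and $(\eta_{N_n+j})_{j}$, obtained from $\mathcal{F}_{n,\infty}^X$, are the bi-infinite increment scenery $(\eta_k)_{k\in\mathbb{Z}}$ re-indexed by an affine map of slope $\pm1$. I would therefore look for $\alpha\in\{-1,1\}$ and $\beta\in\mathbb{Z}$ with $\eta_{Ak+B}=\eta_{N_n+\alpha k+\beta}$ for every $k$; the choice $(\alpha,\beta)=(A,B-N_n)$ shows such a pair exists. The crux is that this pair is a.s.\ unique and, being determined by the two known sequences, is $\mathcal{F}_{0,\infty}^Y\vee\mathcal{F}_{n,\infty}^X$-measurable. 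For this I would establish a rigidity property of the i.i.d.\ scenery: almost surely, for every pair of distinct affine maps $f,g:\mathbb{Z}\to\mathbb{Z}$ of the form $k\mapsto\pm k+c$ one has $\eta\circ f\neq\eta\circ g$. There are only countably many such pairs, and for each fixed pair with $f\neq g$ one can select indices $k_1<k_2<\cdots$ along which the values $f(k_i),g(k_i)$ are pairwise distinct; independence of the corresponding scenery values then gives $\mathbf{P}[\eta\circ f=\eta\circ g]=0$, and a union bound over all pairs finishes the lemma. Applying this with $f:k\mapsto Ak+B$ and $g:k\mapsto N_n+\alpha k+\beta$ forces $Ak+B=N_n+\alpha k+\beta$, whence $\alpha=A$ and $\beta=B-N_n$.

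Finally I would conclude. Since the alignment satisfies $\beta=B-N_n$, we have $\xi_B=\xi_{N_n+\beta}$, and the right-hand side is precisely the $\mathcal{F}_{n,\infty}^X$-measurable reconstructed scenery $j\mapsto\xi_{N_n+j}$ evaluated at the $\mathcal{F}_{0,\infty}^Y\vee\mathcal{F}_{n,\infty}^X$-measurable index $\beta$. Thus $\xi_B$ is $\mathcal{F}_{0,\infty}^Y\vee\mathcal{F}_{n,\infty}^X$-measurable. As $n\ge 0$ was arbitrary and $\xi_B$ does not depend on $n$, it follows that $\xi_B$ is measurable with respect to $\bigcap_{n\ge0}(\mathcal{F}_{0,\infty}^Y\vee\mathcal{F}_{n,\infty}^X)$, as claimed. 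I expect the rigidity and consequent uniqueness of the alignment to be the main obstacle; the recurrence-based reconstruction and the final assembly are comparatively routine.
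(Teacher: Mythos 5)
Your proposal is correct and follows essentially the same route as the paper's own proof: reconstruct the scenery relative to $N_n$ from $\mathcal{F}_{n,\infty}^X$ via recurrence of the walk (the paper's $\tau_j$, $\xi_j'=\xi_{N_n+j}$), align it with the $\mathcal{F}_{0,\infty}^Y$-measurable sequence $(\eta_{Ak+B})_k$ using the a.s.\ absence of nontrivial affine self-symmetries of the i.i.d.\ scenery, and evaluate the reconstructed scenery at the resulting measurable index $\beta=B-N_n$ to obtain $\xi_B$. Your rigidity statement for pairs of distinct affine maps is equivalent to the paper's no-self-symmetry estimate and is proved by the same independence argument.
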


\begin{proof}
Fix $n\in\mathbb{Z}$.  Define the random variables 
$(\tau_k)_{k\in\mathbb{Z}}$ as
$$
	\tau_j = \inf\left\{k\ge 0:
	\sum_{i=0}^{k-1} X_{n,i,0}=j
	\right\},
$$
and define the random variables $(\xi_k')_{k\in\mathbb{Z}}$ as
$$
	\xi_j' = X_{n,\tau_j,2}\,\mathbf{1}_{\tau_j<\infty}.
$$
Then clearly $(\xi_k')_{k\in\mathbb{Z}}$ is 
$\mathcal{F}_{n,\infty}^X$-measurable and 
$\mathbf{P}[(\xi_k')_{k\in\mathbb{Z}}\approx 
(\xi_k)_{k\in\mathbb{Z}}]=1$.

We now claim that we can ``align'' $(\xi_k')_{k\in\mathbb{Z}}$ with
$(\eta_{Ak+B})_{k\in\mathbb{Z}}$.  Indeed, note that for any 
$b\in\mathbb{Z}$, we can estimate
$$
	\mathbf{P}\left[
	\eta_{k} = \eta_{k+b} \mbox{ for all }k\in\mathbb{Z}
	\right] \le
	\mathbf{P}\left[
	\eta_{0} = \eta_{kb} \mbox{ for all }k\ge 1
	\right] = 0,
$$
$$
	\mathbf{P}\left[
	\eta_{k} = \eta_{-k+b} \mbox{ for all }k\in\mathbb{Z}
	\right] \le
	\prod_{k=b}^\infty
	\mathbf{P}\left[
	\eta_{k} = \eta_{-k+b}
	\right] = 0,
$$
where we have used that $(\eta_k)_{k\in\mathbb{Z}}$ are i.i.d.\ and 
nondeterministic.  Therefore
$$
	\mathbf{P}\left[
	\mbox{there exist }a\in\{-1,1\},~b\in\mathbb{Z}\mbox{ such
	that }
	\eta_{k} = \eta_{ak+b} \mbox{ for all }k\in\mathbb{Z}
	\right] = 0.
$$
In particular, if we define $(\eta_k')_{k\in\mathbb{Z}}$ as
$$
	\eta_j' = (\xi_j' - \xi_{j-1}') \mathop{\mathrm{mod}} 3,
$$
it follows that there must exist $\mathbf{P}$-a.s.\ unique 
$\mathcal{F}_{0,\infty}^Y\vee\mathcal{F}_{n,\infty}^X$-measurable
random variables $A'$ and $B'$, taking values in $\{-1,1\}$ and
$\mathbb{Z}$, respectively, such that
$$
	\eta_{A'j+B'}' = \eta_{Aj+B}\quad
	\mbox{for all }j\in\mathbb{Z}\quad
	\mathbf{P}\mbox{-a.s.}
$$
It follows by uniqueness that
$$
	\xi_{A'j+B'}' = \xi_{Aj+B}\quad
	\mbox{for all }j\in\mathbb{Z}\quad
	\mathbf{P}\mbox{-a.s.}
$$
In particular, $\xi_{B'}'=\xi_B$ $\mathbf{P}$-a.s.  But $\xi_{B'}'$ is
$\mathcal{F}_{0,\infty}^Y\vee\mathcal{F}_{n,\infty}^X$-measurable
by construction.  Therefore, we have shown that
$\xi_{B}$ is $\mathbf{P}$-a.s.\ 
$\mathcal{F}_{0,\infty}^Y\vee\mathcal{F}_{n,\infty}^X$-measurable.
As the choice of $n$ was arbitrary, the proof is easily completed.
\end{proof}

\begin{lem}
The random variable $\xi_B$ is not $\mathbf{P}$-a.s.\ 
$\mathcal{F}_{0,\infty}^Y$-measurable.
\end{lem}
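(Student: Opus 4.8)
The plan is to prove the stronger quantitative statement that $\mathbf{P}[\xi_B = i \mid \mathcal{F}_{0,\infty}^Y] = 1/3$ for every $i \in \{0,1,2\}$. This immediately rules out $\mathcal{F}_{0,\infty}^Y$-measurability of $\xi_B$, since an $\mathcal{F}_{0,\infty}^Y$-measurable random variable would have conditional probabilities in $\{0,1\}$. This is precisely the mechanism behind the noiseless counterexample of section \ref{sec:simplex}, where $\xi_0$ fails to be measurable because $\mathbf{P}[\xi_0 = 0 \mid \mathcal{F}_{-\infty,0}^Y] = 1/2$; here we are simply transporting that phenomenon through the random alignment $B$.

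The crucial structural observation is that the observations see the scenery only through its increments $\eta_k = (\xi_k - \xi_{k-1})\bmod 3$, never through the absolute values $\xi_k$ themselves; the ``initial phase'' $\xi_0$ is thus invisible to the filter. To exploit this I would introduce the auxiliary $\sigma$-field $\mathcal{G} := \sigma\{(\eta_k)_{k\in\mathbb{Z}},\,(\delta_k)_{k\in\mathbb{Z}},\,(\gamma_k)_{k\in\mathbb{Z}}\}$, generated by all the driving randomness except $\xi_0$. Since $Y_n = e(\eta_{N_n}) + \varepsilon\gamma_n$ and $N_n$ is a function of $(\delta_k)$, we have $\mathcal{F}_{0,\infty}^Y \subseteq \mathcal{G}$. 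Moreover $B$ is by construction $\mathcal{F}_{0,\infty}^Y \vee \mathcal{F}_{-\infty,\infty}^\eta$-measurable, and as $\mathcal{F}_{-\infty,\infty}^\eta = \sigma\{(\eta_k)\} \subseteq \mathcal{G}$, the random variable $B$ is in fact $\mathcal{G}$-measurable.

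Next I would write $\xi_B = (\xi_0 + c)\bmod 3$, where $c := c(B,(\eta_k))$ is the signed sum of the increments between $0$ and $B$: explicitly $c = \sum_{k=1}^{B}\eta_k$ for $B>0$ and $c = -\sum_{k=B+1}^{0}\eta_k$ for $B<0$ (and $c=0$ for $B=0$). As both $B$ and $(\eta_k)$ are $\mathcal{G}$-measurable, so is $c$. Now $\xi_0$ is, by the construction of the probability space, independent of $\mathcal{G}$ and uniformly distributed on $\{0,1,2\}$. Hence, conditionally on $\mathcal{G}$, the quantity $c$ is a fixed constant while $\xi_0$ remains uniform, so that $(\xi_0 + c)\bmod 3$ is again uniform on $\{0,1,2\}$; this yields $\mathbf{P}[\xi_B = i \mid \mathcal{G}] = 1/3$ for each $i$. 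Finally, since $\mathcal{F}_{0,\infty}^Y \subseteq \mathcal{G}$, the tower property gives $\mathbf{P}[\xi_B = i \mid \mathcal{F}_{0,\infty}^Y] = \mathbf{E}[\tfrac{1}{3} \mid \mathcal{F}_{0,\infty}^Y] = 1/3$, which completes the proof.

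The argument is short, and the only genuinely delicate point is the choice of the auxiliary $\sigma$-field $\mathcal{G}$: one must condition on enough randomness to pin down both $B$ and the increments $(\eta_k)$, yet leave $\xi_0$ free. Once this is arranged, the independence of $\xi_0$ does all the work. It is worth emphasizing that this is exactly where the ``scenery of increments'' design of the construction pays off: had the observations carried any information about the absolute scenery values, $\xi_0$ would fail to be independent of $\mathcal{G}$ and the argument would break down.
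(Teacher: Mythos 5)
Your proof is correct and follows essentially the same route as the paper: both condition on the $\sigma$-field $\mathcal{G}=\mathcal{F}_{-\infty,\infty}^\eta\vee\mathcal{F}_{-\infty,\infty}^\delta\vee\mathcal{F}_{-\infty,\infty}^\gamma$, which contains $\mathcal{F}_{0,\infty}^Y$ and makes $B$ measurable, and then exploit the independence and uniformity of $\xi_0$ to conclude $\mathbf{P}[\xi_B=i\mid\mathcal{F}_{0,\infty}^Y]=1/3$. The only cosmetic difference is that the paper deduces $\mathbf{P}[\xi_j=i\mid\mathcal{G}]=1/3$ via stationarity and the shift $\Theta^j$, whereas you use the explicit representation $\xi_B=(\xi_0+c)\bmod 3$ with $c$ a $\mathcal{G}$-measurable increment sum; both are valid.
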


\begin{proof}
Note that $\mathbf{P}$-a.s.
\begin{equation*}
\begin{split}
	&\mathbf{P}[\xi_B=i,~B=j|\mathcal{F}_{-\infty,\infty}^\eta\vee
	\mathcal{F}_{-\infty,\infty}^\delta\vee
	\mathcal{F}_{-\infty,\infty}^\gamma] \\
	&\qquad\mbox{} =
	\mathbf{1}_{B=j} \,
	\mathbf{P}[\xi_j=i|\mathcal{F}_{-\infty,\infty}^\eta\vee
	\mathcal{F}_{-\infty,\infty}^\delta\vee
	\mathcal{F}_{-\infty,\infty}^\gamma]
	\\
	&\qquad\mbox{} =
	\mathbf{1}_{B=j} \,\{
	\mathbf{P}[\xi_0=i|\mathcal{F}_{-\infty,\infty}^\eta\vee
	\mathcal{F}_{-\infty,\infty}^\delta\vee
	\mathcal{F}_{-\infty,\infty}^\gamma]
	\circ\Theta^j\}
	\\
	&\qquad\mbox{} =
	\mathbf{1}_{B=j}\,
	\mathbf{P}[\xi_0=i].
\end{split}
\end{equation*}
Here we have used that $B$ is 
$\mathcal{F}_{0,\infty}^Y\vee\mathcal{F}_{-\infty,\infty}^{\eta}$-measurable
for the first equality, stationarity of the law of 
$(\xi_k,\eta_k,\delta_k,\gamma_k)_{k\in\mathbb{Z}}$ for the second 
equality ($\Theta$ denotes the canonical shift), and independence of
$\xi_0$ and $(\eta_k,\delta_k,\gamma_k)_{k\in\mathbb{Z}}$ for the third
equality.  Summing over $j$, and conditioning on 
$\mathcal{F}_{0,\infty}^Y$, we obtain
$$
	\mathbf{P}[\xi_B=i|\mathcal{F}_{0,\infty}^Y]=
	\mathbf{P}[\xi_0=i]=1/3\quad\mathbf{P}\mbox{-a.s.}
$$
Thus $\xi_B$ is independent from $\mathcal{F}_{0,\infty}^Y$,
hence not $\mathbf{P}$-a.s.\ $\mathcal{F}_{0,\infty}^Y$-measurable.
\end{proof}

\begin{rem}
The additive noise model $Y_n = h(X_n)+\varepsilon\gamma_n$ is 
inessential to the proof; we could have just as easily started 
from the $\{0,1,2\}$-valued observation model $Y_n'$ as in 
\cite{MR03}.  The only reason we have chosen to construct our example with 
the additive noise model is to make the point that there is nothing 
special about the choice of observations: one does not have to ``cook up'' 
a complicated observation model to make the counterexample work.  All the 
unpleasantness arises from the ergodic theory of random walks in 
random scenery.
\end{rem}

\section{Proof of Theorem \ref{thm:result}}
\label{sec:pf2}

For any $x\in I^{\mathbb{Z}_+}$, define
$$
	\tau_j(x) = \inf\left\{
	k\ge 0:\sum_{i=0}^{k-1} x_{i,0}=j
	\right\}.
$$
Now define the space
$$
	E := \left\{
	x\in I^{\mathbb{Z}_+}:
	\tau_j(x)<\infty\mbox{ for all }j\in\mathbb{Z}
	\right\}\subset I^{\mathbb{Z}_+}.
$$
We endow $E$ with the topology of pointwise convergence (inherited from
$I^{\mathbb{Z}_+}$).

\begin{lem}
\label{lem:polish}
$E$ is Polish.
\end{lem}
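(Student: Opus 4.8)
The plan is to identify $E$ as a $G_\delta$ subset of the ambient space $I^{\mathbb{Z}_+}$ and then invoke the classical theorem (due to Alexandrov) that a subspace of a Polish space is itself Polish in the induced topology if and only if it is $G_\delta$. As a preliminary step I would record that $I^{\mathbb{Z}_+}$ is already Polish: since $I$ is a finite set with the discrete topology, the countable product $I^{\mathbb{Z}_+}$ equipped with the topology of pointwise convergence is compact and metrizable, and hence Polish.

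The core of the argument is to exhibit $E$ as a countable intersection of open sets. For $x\in I^{\mathbb{Z}_+}$ and $k\ge 0$ write $S_k(x)=\sum_{i=0}^{k-1}x_{i,0}$ (so $S_0(x)=0$), so that $\tau_j(x)=\inf\{k\ge 0:S_k(x)=j\}$ with the usual convention that the infimum of the empty set is $+\infty$. I would then observe that
$$
	\{x:\tau_j(x)<\infty\}=\bigcup_{k\ge 0}\{x:S_k(x)=j\}.
$$
For fixed $k$ and $j$, the event $\{S_k(x)=j\}$ depends only on the finitely many coordinates $x_{0,0},\ldots,x_{k-1,0}$ and is therefore a cylinder set, which is clopen in the product topology. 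Consequently each set $\{\tau_j<\infty\}$ is a countable union of clopen sets, hence open, and
$$
	E=\bigcap_{j\in\mathbb{Z}}\{x:\tau_j(x)<\infty\}
$$
is a countable intersection of open sets, i.e.\ $G_\delta$. Combined with the preliminary step and Alexandrov's theorem, this yields that $E$ is Polish.

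The only point requiring a little care is that one cannot shortcut the argument by realizing $E$ as open or as closed, for $E$ is in fact neither. Indeed, since the first coordinates $x_{\cdot,0}$ encode the $\pm 1$ increments of a nearest-neighbor walk, membership in $E$ is equivalent to that walk being unbounded both above and below; approximating a walk that escapes only upward by walks which additionally dip down to every negative integer shows that $E$ is not closed, while the reverse modification shows that it is not open. Thus the $G_\delta$ route is genuinely needed. I expect no serious obstacle here: the entire content of the proof is the clopenness of the cylinder sets $\{S_k=j\}$ together with the bookkeeping of the index $j$ over all of $\mathbb{Z}$, and the invocation of the standard $G_\delta$ characterization of Polish subspaces.
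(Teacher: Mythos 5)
Your proof is correct, but it takes a genuinely different route from the paper's. The paper proves the lemma by exhibiting an explicit metric
$$
	d(x,x') = \sum_{k= 0}^\infty 2^{-k}\,\mathbf{1}_{x_k\ne x_k'}
	+ \sum_{j=-\infty}^{\infty} 2^{-|j|}\,\{|\tau_j(x)-\tau_j(x')|\wedge 1\}
$$
and checking by hand that it metrizes pointwise convergence on $E$ and that $(E,d)$ is complete: a $d$-Cauchy sequence has a pointwise limit $x\in I^{\mathbb{Z}_+}$, and if $x\notin E$ then some $\tau_j(x)=\infty$, which forces $\sup_{m\ge n}d(x_m,x_n)\ge 2^{-|j|}$ for all $n$ and contradicts the Cauchy property. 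You instead note that each $\{\tau_j<\infty\}=\bigcup_{k\ge 0}\{S_k=j\}$ is a countable union of clopen cylinders, hence open, so that $E$ is a $G_\delta$ subset of the compact metrizable space $I^{\mathbb{Z}_+}$, and you invoke Alexandrov's theorem. Both arguments are complete and correct; yours is shorter and leans on a standard theorem, while the paper's is self-contained and, in passing, establishes that for $x\in E$ one has $\tau_j(x_n)=\tau_j(x)$ as soon as $x_n$ agrees with $x$ on the coordinates up to $\tau_j(x)$ --- a fact that is reused in the proof of Lemma \ref{lem:feller} to show that the maps $T_{\pm 1}$ are continuous and hence that $\tilde P$ is Feller. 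If you follow the $G_\delta$ route, you would need to record that one-line observation separately at that later point. Morally the two proofs coincide: the paper's metric is precisely the kind of metric that the standard proof of Alexandrov's theorem manufactures for a $G_\delta$ set, with the truncated hitting-time differences $|\tau_j(x)-\tau_j(x')|\wedge 1$ playing the role of the penalty terms that prevent Cauchy sequences from escaping to $I^{\mathbb{Z}_+}\setminus E$. Your side remark that $E$ is neither open nor closed is accurate and correctly identifies why the $G_\delta$ mechanism (or, equivalently, the extra terms in the paper's metric) cannot be dispensed with.
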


\begin{proof}
For $x,x'\in E$, define the metric
$$
	d(x,x') := 
	\sum_{k=0}^\infty 2^{-k}\,\mathbf{1}_{x_k\ne x_k'} +
	\sum_{j=-\infty}^\infty 2^{-|j|}\,\{|\tau_j(x)-\tau_j(x')|
	\wedge 1\}.
$$
It suffices to prove that $d$ metrizes the topology of pointwise 
convergence in $E$ (which is certainly separable) and that $(E,d)$ is a 
complete metric space.

We first prove that $d$ metrizes the topology of pointwise convergence.
Clearly $d(x_n,x)\to 0$ as $n\to\infty$ implies that $x_n\to x$ pointwise.  
Conversely, suppose that $x_n\to x$ as $n\to\infty$ pointwise.  It 
suffices to show that $\tau_j(x_n)\to\tau_j(x)$ as $n\to\infty$ for all 
$j\in\mathbb{Z}$.  But as $\tau_j(x)<\infty$ by assumption (as $x\in E$), 
it follows that $\tau_j(x_n)=\tau_j(x)$ whenever $x_{n,k}=x_k$ for all
$k\le\tau_j(x)$, which is the case for $n$ sufficiently large by pointwise 
convergence.  This establishes the claim.

It remains to show that $(E,d)$ is complete.  To this end, let 
$(x_n)_{n\in\mathbb{N}}$ be a Cauchy sequence for the metric $d$.
Then it is clearly Cauchy for
$$
	\tilde d(x,x') := 
	\sum_{k=0}^\infty 2^{-k}\,\mathbf{1}_{x_k\ne x_k'},
$$
which defines a complete metric for the topology of pointwise 
convergence on $I^{\mathbb{Z}_+}\supset E$.  Therefore, there exists
$x\in I^{\mathbb{Z}_+}$ such that $x_n\to x$ as $n\to\infty$ pointwise.  
It suffices to show that $x\in E$.  Indeed, when this is the case, it 
follows immediately that $d(x_n,x)\to 0$ as $n\to\infty$ (as we have shown 
that $d$ metrizes pointwise convergence in $E$), thus proving completeness 
of $(E,d)$.

To complete the proof, suppose that $x\not\in E$.  Then there exists
$j\in\mathbb{Z}$ such that $\tau_j(x)=\infty$.  In particular, if
$x_{n,k}=x_k$ for all $k\le N<\infty$, then $\tau_j(x_n)>N$.  As this is
the case for $n$ sufficiently large by pointwise convergence, it follows 
that
$$
	\sup_{m\ge n}d(x_m,x_n) \ge
	2^{-|j|}\sup_{m\ge n}|\tau_j(x_m)-\tau_j(x_n)|\wedge 1 =
	2^{-|j|}\quad\mbox{for all }n\ge 1.
$$
This contradicts the Cauchy property of $(x_n)_{n\in\mathbb{N}}$.
\end{proof}

Denote by $\mathbf{P}[X_0\in\,\cdot\,]:=\mu$ the invariant measure of the 
$I^{\mathbb{Z}_+}$-valued Markov process $(X_k)_{k\in\mathbb{Z}}$ defined 
in section \ref{sec:construct}.  It is clear that $E$ is measurable as a 
subset of $I^{\mathbb{Z}_+}$ and that $\mu(E)=1$.  We are going to 
construct a Feller transition kernel $\tilde P$ from $E$ to $E$ with 
stationary measure $\mu$ (restricted to $E$), such that the corresponding 
stationary $E$-valued Markov process coincides a.s.\ with the stationary 
$I^{\mathbb{Z}_+}$-valued Markov process $(\tilde X_k)_{k\in\mathbb{Z}}$ 
defined in section \ref{sec:construct}. 

\begin{lem}
\label{lem:feller}
Define the transition kernel
$\tilde P:E\times\mathcal{B}(E)\to[0,1]$ as follows:
$$
	\tilde P(x,\{T_{1}(x)\}) = \tilde P(x,\{T_{-1}(x)\})=\frac{1}{2},
$$
where $T_a:E\to E$, $a\in\{-1,1\}$ are defined as
$$
	T_a(x) = [(a,x_{\tau_{-a}(x),1},x_{\tau_{-a}(x),2}),x].
$$
Then the law under $\mathbf{P}$ of the process $(\tilde 
X_k)_{k\in\mathbb{Z}}$ defined in section \ref{sec:construct} is that of a 
stationary Markov process taking values in $E$ with transition kernel 
$\tilde P$ and invariant measure $\mu$.  Moreover, $\tilde P$ satisfies 
the Feller property.
\end{lem}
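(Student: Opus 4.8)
The plan is to verify two things: that the prescribed kernel $\tilde P$ reproduces the one-step transition law of $(\tilde X_k)$, so that $(\tilde X_k)$ is a stationary Markov process in $E$ with kernel $\tilde P$ and marginal $\mu$; and that $\tilde P$ is Feller. The conceptual point is that running $(X_k)$ forward in time (increasing $k$) is a deterministic left shift of the coordinate sequence $X_n=(Z_k)_{k\ge n}$, so that $\tilde X_0,\ldots,\tilde X_{n-1}$ are all deterministic functions of $\tilde X_{n-1}=X_{-n+1}$. Consequently $\mathcal{F}^{\tilde X}_{0,n-1}=\sigma(\tilde X_{n-1})$, and the Markov property will be automatic once the one-step transition is identified. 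The substantive content is therefore the reversed step $\tilde X_{n-1}\mapsto\tilde X_n$, which prepends the new symbol $Z_{-n}=(\delta_{-n+1},\xi_{N_{-n}-1},\xi_{N_{-n}})$ to the sequence $x:=X_{-n+1}$.

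First I would establish the scenery-reading identity underlying the definition of $T_a$. Writing $x_{k,0}=\delta_{-n+k+2}$, one checks that the partial sums of the first coordinates recover the walk increments forward from time $-n+1$, namely $\sum_{i=0}^{k-1}x_{i,0}=N_{-n+1+k}-N_{-n+1}$, so $\tau_j(x)$ is exactly the first local time at which $(N_m)$ reaches the relative level $N_{-n+1}+j$. Reading off the scenery at that time gives $x_{\tau_j(x),2}=\xi_{N_{-n+1}+j}$ and $x_{\tau_j(x),1}=\xi_{N_{-n+1}+j-1}$, both finite because $x\in E$ a.s.\ (as $\mu(E)=1$). Since $N_{-n}=N_{-n+1}-\delta_{-n+1}$, taking $a=\delta_{-n+1}$ and $j=-a$ yields $Z_{-n}=(a,x_{\tau_{-a}(x),1},x_{\tau_{-a}(x),2})$, i.e.\ $\tilde X_n=T_{\delta_{-n+1}}(\tilde X_{n-1})$. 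The main obstacle of the lemma lives here: the map $T_a$ must be written so that it is simultaneously a.s.\ correct (inverting the information lost by the forward shift) and continuous on $E$; the former is possible only because of the recurrence encoded in the definition of $E$, which guarantees every level is visited and hence every scenery value is recorded in $x$.

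It then remains to note that $\delta_{-n+1}$ is uniform on $\{-1,1\}$ and independent of $\sigma(\tilde X_{n-1})=\sigma(X_{-n+1})$, since $X_{-n+1}$ is a measurable function of the scenery $\xi$ and of $\{\delta_m:m\ge -n+2\}$ alone. Combined with the previous step this gives $\mathbf{P}[\tilde X_n\in A\mid\mathcal{F}^{\tilde X}_{0,n-1}]=\tfrac12\mathbf{1}_{T_1(\tilde X_{n-1})\in A}+\tfrac12\mathbf{1}_{T_{-1}(\tilde X_{n-1})\in A}=\tilde P(\tilde X_{n-1},A)$, which is at once the Markov property and the identification of the kernel. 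Stationarity of $(\tilde X_k)$ follows from stationarity of $(X_k)$ under time reversal, whence $\mu=\mathbf{P}[\tilde X_0\in\,\cdot\,]$ satisfies $\mu\tilde P=\mu$; and each $\tilde X_k$ lies in $E$ a.s.\ since $\mu(E)=1$.

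Finally, for the Feller property I would show that $T_1,T_{-1}:E\to E$ are continuous, since then $\tilde Pf(x)=\tfrac12 f(T_1(x))+\tfrac12 f(T_{-1}(x))$ is continuous for every bounded continuous $f$. That $T_a$ maps $E$ into $E$ is clear: prepending a step $a$ replaces the walk $S_k=\sum_{i<k}x_{i,0}$ by $S_k'=a+S_{k-1}$ for $k\ge 1$, whose range remains all of $\mathbb{Z}$, so every level is still reached. For continuity, suppose $x_m\to x$ in $E$; the coordinates of index $k\ge 1$ of $T_a(x_m)$ equal $x_{m,k-1}\to x_{k-1}$, while for the single prepended coordinate I would invoke Lemma \ref{lem:polish}, by which $\tau_{-a}(x_m)=\tau_{-a}(x)=:p<\infty$ for all large $m$, so that $x_{m,p}\to x_p$ and the prepended symbol converges. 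Hence $T_a(x_m)\to T_a(x)$ pointwise in $E$, and since $d$ metrizes pointwise convergence on $E$ this is convergence in $(E,d)$; thus $T_a$ is continuous and $\tilde P$ is Feller. This last step is exactly why the restriction to $E$ was introduced: the reconstruction map $\tau_j$ is discontinuous on all of $I^{\mathbb{Z}_+}$, but becomes continuous once every level is guaranteed to be reached.
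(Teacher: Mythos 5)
Your proof is correct and follows essentially the same route as the paper's: identify $\tilde X_n = T_{\delta_{-n+1}}(\tilde X_{n-1})$ via the scenery-reading identity, use independence of $\delta_{-n+1}$ from $\sigma(\tilde X_{n-1})$, and deduce the Feller property from continuity of $T_{\pm 1}$ on $E$ argued as in Lemma \ref{lem:polish}. The only cosmetic difference is that you obtain the Markov property directly from $\sigma(\tilde X_0,\ldots,\tilde X_{n-1})=\sigma(\tilde X_{n-1})$, whereas the paper invokes the general fact that the time reversal of a stationary Markov process is again Markov and then only verifies the one-step transition at $n=1$.
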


\begin{proof}
It follows along the lines of the proof of Lemma \ref{lem:polish} that the 
functions $T_{1}$ and $T_{-1}$ are continuous.  Therefore, the Feller 
property of $\tilde P$ is immediate.

To complete the proof, it suffices (as clearly $\tilde X_n\in E$ 
$\mathbf{P}$-a.s.\ for all $n\in\mathbb{Z}$ and as
$(\tilde X_k)_{k\in\mathbb{Z}}$ is a stationary Markov process) to show that
$$
	\mathbf{P}[\tilde X_1\in A|\tilde X_0] = \tilde P(\tilde X_0,A)
	\quad\mathbf{P}\mbox{-a.s.}\quad
	\mbox{for all }A\in\mathcal{B}(E).
$$
To this end, note that 
$$
	\tilde X_1 =
	[(\delta_0,\xi_{-\delta_0-1},\xi_{-\delta_0}),\tilde X_0] =
	[(\delta_0,\tilde X_{0,\tau_{-\delta_0}(\tilde X_0),1},
	\tilde X_{0,\tau_{-\delta_0}(\tilde X_0),2}),\tilde X_0]
	\quad\mathbf{P}\mbox{-a.s.}
$$
Moreover, as $\tilde X_0$ is $\mathcal{F}_{-\infty,\infty}^\xi\vee
\mathcal{F}_{1,\infty}^\delta$-measurable, it follows from the 
construction in section \ref{sec:construct} that $\delta_0$ is 
independent of $\tilde X_0$.  The result follows directly.
\end{proof}

\begin{proof}[Proof of Theorem \ref{thm:result}]
Construct the canonical $E\times\mathbb{R}^3$-valued stationary hidden 
Markov model $(X_k',Y_k')_{k\in\mathbb{Z}}$ such that the hidden process 
$(X_k')_{k\in\mathbb{Z}}$ has transition kernel $\tilde P$ and invariant 
measure $X_0'\sim\mu$, and with the observation model 
$Y_n'=h(X_n')+\varepsilon\gamma_n$ where
$(\gamma_k)_{k\in\mathbb{Z}}$ is an i.i.d.\ sequence of standard Gaussian 
random variables in $\mathbb{R}^3$ independent of 
$(X_k')_{k\in\mathbb{Z}}$.  Clearly $E$ and $\mathbb{R}^3$ are Polish by 
Lemma \ref{lem:polish}, the observations are nondegenerate, 
$h:E\to\mathbb{R}^3$ (defined in section \ref{sec:construct}) is bounded 
and continuous, and $\tilde P$ is Feller by Lemma \ref{lem:feller}.  
Moreover, the law of the model $(X_k',Y_k')_{k\in\mathbb{Z}}$ coincides 
with that of $(\tilde X_k,\tilde Y_k)_{k\in\mathbb{Z}}$ as defined in 
section \ref{sec:construct}.  Therefore, by Corollary \ref{cor:preresult}, 
$(X_k')_{k\in\mathbb{Z}}$ is purely nondeterministic but (\ref{eq:exchg}) 
fails for this model when $\varepsilon>0$ is chosen sufficiently small.
\end{proof}

\appendix

\section{Ergodic theory of nonlinear filters}
\label{sec:app}

The goal of the appendix is to collect a few basic results on the ergodic 
theory of nonlinear filters.  Similar results appear in various forms in 
the literature, see, for example, \cite{Bud03,CvH10} and the references 
therein.  However, all known proofs require various simplifying 
assumptions, such as the Feller property or irreducibility of the hidden 
process, nondegenerate observations, etc.  As a general result does not 
appear to be readily available in the literature, we provide here a 
largely self-contained treatment culminating in the proof of Theorem 
\ref{thm:exchg}.

Let us note that analogous results can be obtained for continuous time, 
either by direct arguments (cf.\ \cite{Yor77}) or by reduction to discrete 
time (as in \cite{vH09}).

\subsection{Markov property of the filter}

As in the introduction, we let $E$ and $F$ be Polish spaces, let 
$P:E\times\mathcal{B}(E)\to[0,1]$ and $\Phi:E\times\mathcal{B}(F)\to[0,1]$ 
be the transition kernels, and let $\mu:\mathcal{B}(E)\to[0,1]$ be the 
$P$-invariant measure defining the law of the stationary hidden Markov 
model $(X_k,Y_k)_{k\in\mathbb{Z}}$.  We denote by $\mathcal{P}(G)$ the 
space of probability measures on the Polish space $G$, endowed with the 
topology of weak convergence of probability measures.

\begin{lem}[\cite{Mey76}, Lemma 1]
\label{lem:meyer}
For $\nu\in\mathcal{P}(E)$, define the probability measure
$$
	P_\nu(A) = \int \mathbf{1}_A(x,y)\,\nu(dx')\,P(x',dx)
	\,\Phi(x,dy)\quad
	\mbox{for all }A\in\mathcal{B}(E\times F).
$$
Denote by $X:E\times F\to E$ and $Y:E\times F\to F$ the canonical 
projections.  There exists a measurable map $\Pi:\mathcal{P}(E)\times F\to
\mathcal{P}(E)$ such that $\Pi(\nu,Y)$ is a version of the 
regular conditional probability $P_\nu(X\in\,\cdot\,|Y)$ for every
$\nu\in\mathcal{P}(E)$.
\end{lem}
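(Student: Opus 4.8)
The plan is to construct $\Pi$ explicitly for every $\nu$ simultaneously by a martingale approximation, rather than by disintegrating a single reference measure on $\mathcal{P}(E)$ (which would only produce a version for \emph{almost every} $\nu$, the exceptional null set being the obstruction to the required uniformity). The key structural fact to exploit is that $\nu\mapsto P_\nu$ is itself measurable: for each fixed $A\in\mathcal{B}(E\times F)$, the inner integral $x'\mapsto\int\mathbf{1}_A(x,y)\,P(x',dx)\,\Phi(x,dy)$ is a bounded measurable function, and integration against $\nu$ is measurable in $\nu$, so $\nu\mapsto P_\nu(A)$ is measurable. Consequently, any finite combinatorial conditioning of $P_\nu$ will be automatically jointly measurable in $(\nu,y)$.

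First I would fix, using that $F$ is Polish, an increasing sequence of countable Borel partitions $(\mathcal{Q}_n)_{n\ge1}$ of $F$ generating $\mathcal{B}(F)$, writing $Q_n(y)$ for the cell of $\mathcal{Q}_n$ containing $y$. Using that $E$ is Polish, I would also fix a countable convergence-determining family $\{f_m\}_{m\ge 0}\subset C_b(E)$ with $f_0\equiv 1$, together with a default measure $\nu_\ast\in\mathcal{P}(E)$. Define the elementary posteriors
\[
\Pi^{(n)}(\nu,y)(B)=\frac{P_\nu[X\in B,\ Y\in Q_n(y)]}{P_\nu[Y\in Q_n(y)]}
\]
when the denominator is positive, and $\Pi^{(n)}(\nu,y):=\nu_\ast$ otherwise. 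By measurability of $\nu\mapsto P_\nu$ and since each $\mathcal{Q}_n$ is countable, $\Pi^{(n)}$ is a jointly measurable kernel defined at \emph{every} $(\nu,y)$. For fixed $\nu$, the processes $y\mapsto\int f_m\,d\Pi^{(n)}(\nu,y)$ are, as $n$ increases, bounded Doob martingales under the $Y$-marginal of $P_\nu$ with respect to the filtration generated by $(\mathcal{Q}_n)$, converging to $\mathbf{E}_{P_\nu}[f_m(X)\mid\sigma(Y)]$; hence, off a single null set for this marginal, they converge for all $m$ at once. I would then define $\Pi(\nu,y)$ to be the probability measure represented by the limiting positive normalized functional $f_m\mapsto\lim_n\int f_m\,d\Pi^{(n)}(\nu,y)$, setting $\Pi(\nu,y):=\nu_\ast$ wherever this limit fails to exist or to represent a probability measure. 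Each coordinate $(\nu,y)\mapsto\int f_m\,d\Pi(\nu,y)$ is a pointwise limit of jointly measurable functions, so $\Pi$ is jointly measurable into $\mathcal{P}(E)$.

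It remains to verify the defining property for each fixed $\nu$: for every bounded measurable $g$ on $F$ and every $m$, the identity $\mathbf{E}_{P_\nu}[f_m(X)g(Y)]=\mathbf{E}_{P_\nu}[(\int f_m\,d\Pi(\nu,Y))\,g(Y)]$ is obtained by passing to the limit $n\to\infty$ in the corresponding exact identity for $\Pi^{(n)}$ and invoking bounded convergence; a monotone-class argument then upgrades $f_m$ to an arbitrary bounded measurable $f$. Since the construction is carried out for every $\nu$, this shows $\Pi(\nu,Y)$ is a version of $P_\nu(X\in\,\cdot\,|Y)$ for \emph{all} $\nu$, as required.

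I expect the main obstacle to be the passage from the finitely additive limiting set functions to genuine elements of $\mathcal{P}(E)$; this is precisely where Polishness of $E$ is essential, through a tightness argument. I would secure tightness by adjoining to $\{f_m\}$ continuous approximants to the indicators of an exhausting sequence of compact subsets of $E$, so that the simultaneous martingale convergence of these functionals forces the limiting functional to be tight, and hence (via Riesz representation and Prokhorov) to represent a bona fide probability measure for a.e.\ $y$. The joint measurability of the limit and the final verification step are then routine.
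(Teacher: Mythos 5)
The paper gives no proof of this lemma at all---it is imported from Meyer \cite{Mey76} as a black box---so there is no internal argument to compare against; what matters is whether your construction stands on its own. In essence it does: the differentiation scheme (countable generating partitions of $F$, elementary posteriors $\Pi^{(n)}$ that are everywhere defined and jointly measurable because $\nu\mapsto P_\nu(A)$ is measurable, L\'evy upward convergence along $\sigma(\mathcal{Q}_n)\uparrow\mathcal{B}(F)$ applied simultaneously to a countable measure-determining family, and a measurable default value off the convergence set) is exactly the standard route to a version valid for \emph{every} $\nu$ rather than for almost every $\nu$, and you have correctly located where Polishness of $E$ must enter.

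The one step that does not work as written is your tightness device. A Polish space need not be $\sigma$-compact (e.g.\ $\mathbb{N}^{\mathbb{N}}$), so there is in general no exhausting sequence of compact subsets of $E$ whose approximate indicators you could adjoin to $\{f_m\}$; moreover, the compacts that make the conditional laws tight come from tightness of the law $\nu P$ of $X$ under $P_\nu$ and hence depend on $\nu$, which threatens the joint measurability of the ``good set'' on which the limiting functional represents a probability measure. The standard repair: embed $E$ homeomorphically as a Borel subset of a compact metric space $\hat E$ (e.g.\ the Hilbert cube) and take $\{f_m\}$ dense in $C(\hat E)$. On the (jointly measurable) set where all limits exist, positivity and normalization pass to the limit, and Riesz representation on the \emph{compact} space $\hat E$ yields a genuine $\hat\mu_{\nu,y}\in\mathcal{P}(\hat E)$ with no tightness required; since $\mu\mapsto\mu(E)$ is Borel on $\mathcal{P}(\hat E)$, the set $\{\hat\mu_{\nu,y}(E)=1\}$ is measurable, and there you set $\Pi(\nu,y):=\hat\mu_{\nu,y}|_E$, defaulting to $\nu_\ast$ elsewhere. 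For each fixed $\nu$ one then checks that $\hat\mu_{\nu,Y}(E)=1$ for $P_\nu$-a.e.\ $Y$ by choosing compact $K\subset E$ with $\nu P(K)\ge 1-\epsilon$ and combining the portmanteau inequality $\hat\mu_{\nu,y}(K)\ge\limsup_n\Pi^{(n)}(\nu,y)(K)$ with reverse Fatou (or with the martingale identity $\lim_n\Pi^{(n)}(\nu,\cdot)(K)=P_\nu(X\in K\mid Y)$ a.e.). With this modification your argument is complete; the final verification via $L^1$ martingale convergence and a monotone class argument goes through as you describe.
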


We now define the transition kernel
$\mathsf{\Pi}:\mathcal{P}(E)\times\mathcal{B}(\mathcal{P}(E))\to[0,1]$
as follows:
$$
	\mathsf{\Pi}(\nu,A) =
	\int \mathbf{1}_A(\Pi(\nu,y))\,\nu(dx')\,P(x',dx)\,\Phi(x,dy).
$$
We claim that the nonlinear filter $(\pi_k)_{k\ge 0}$ is a 
$\mathcal{P}(E)$-valued Markov process with transition kernel 
$\mathsf{\Pi}$.  To prove this we will need the following result on 
conditioning under a regular conditional probability due to
von Weizs{\"a}cker.

\begin{lem}[\cite{Weiz83}]
\label{lem:weiz}
Let $G$, $G'$ and $H$ be Polish spaces, and denote by $g$, $g'$ and $h$
the canonical projections from $G\times G'\times H$ on $G$, $G'$ and $H$, 
respectively.  Let $\mathbf{Q}$ be a probability measure on $G\times 
G'\times H$, and let $q_{\cdot,\cdot}:G\times 
G'\times\mathcal{B}(H)\to[0,1]$ and $q_{\cdot}:G\times\mathcal{B}(G'\times 
H)\to[0,1]$ be versions of the regular conditional probabilities
$\mathbf{Q}[h\in\,\cdot\,|g,g']$ and
$\mathbf{Q}[(g',h)\in\,\cdot\,|g]$, respectively.
Then for $\mathbf{Q}$-a.e.\ $x\in G$, the kernel
$q_{x,g'}[\,\cdot\,]$ is a version of the regular conditional probability
$q_{x}[h\in\,\cdot\,|g']$.
\end{lem}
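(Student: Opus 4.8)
The plan is to verify directly that $q_{x,\cdot}$ satisfies the defining property of the regular conditional probability $q_x[h\in\,\cdot\,|g']$ for $\mathbf{Q}$-a.e.\ $x$. Writing $\mathbf{Q}_g$ for the law of $g$ under $\mathbf{Q}$ and $(q_x)_{G'}:=q_x\circ(g')^{-1}$ for the $G'$-marginal of $q_x$, the first thing I would record is the elementary consistency of the two disintegrations, namely that the joint law of $(g,g')$ disintegrates as $\mathbf{Q}_{(g,g')}(dx,dy)=\mathbf{Q}_g(dx)\,(q_x)_{G'}(dy)$. Since $(q_x)_{G'}$ is the $G'$-marginal of a version of $\mathbf{Q}[(g',h)\in\,\cdot\,|g=x]$, it is a version of $\mathbf{Q}[g'\in\,\cdot\,|g=x]$, and this identity is immediate from the definition of $q_x$ by testing against rectangles $A_0\times A$ and noting $\mathbf{Q}(A_0\times A\times H)=\int_{A_0}(q_x)_{G'}(A)\,\mathbf{Q}_g(dx)$.

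Given this, I would compute $\mathbf{Q}(A_0\times A\times B)$ for $A_0\in\mathcal{B}(G)$, $A\in\mathcal{B}(G')$, $B\in\mathcal{B}(H)$ in two ways. Applying the definition of $q_{\cdot,\cdot}$ together with the disintegration above gives $\mathbf{Q}(A_0\times A\times B)=\int_{A_0}\big[\int_A q_{x,y}(B)\,(q_x)_{G'}(dy)\big]\,\mathbf{Q}_g(dx)$, while applying the definition of $q_\cdot$ gives $\mathbf{Q}(A_0\times A\times B)=\int_{A_0}q_x(A\times B)\,\mathbf{Q}_g(dx)$. Both integrands are measurable functions of $x$; for the first this is the standard fact that integrating a jointly measurable function against a measurable kernel yields a measurable function, applied to $(x,y)\mapsto\mathbf{1}_A(y)\,q_{x,y}(B)$ and the kernel $q_x$. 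As the two integrals agree for every $A_0\in\mathcal{B}(G)$, I conclude that for each fixed $A,B$ the identity $\int_A q_{x,y}(B)\,(q_x)_{G'}(dy)=q_x(A\times B)$ holds for $\mathbf{Q}_g$-a.e.\ $x$.

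The main obstacle is that this exceptional null set depends on the pair $(A,B)$, whereas the conclusion requires a single null set valid simultaneously for all Borel $A$ and $B$. This is where I would use the Polish structure: choose countable algebras $\mathcal{A}\subset\mathcal{B}(G')$ and $\mathcal{C}\subset\mathcal{B}(H)$ generating the respective Borel $\sigma$-fields, and let $N$ be the still $\mathbf{Q}_g$-null union of the exceptional sets over the countably many pairs $(A,B)\in\mathcal{A}\times\mathcal{C}$. For $x\notin N$ the identity holds on $\mathcal{A}\times\mathcal{C}$, and I would extend it to all Borel sets by a twofold monotone-class (Dynkin $\pi$-$\lambda$) argument: first fix $B\in\mathcal{C}$ and observe that $A\mapsto\int_A q_{x,y}(B)\,(q_x)_{G'}(dy)$ and $A\mapsto q_x(A\times B)$ are finite measures of equal total mass agreeing on the generating algebra $\mathcal{A}$, hence on all $A\in\mathcal{B}(G')$; then fix $A\in\mathcal{B}(G')$ and repeat in the $B$ variable over $\mathcal{C}$, using countable additivity of $q_{x,y}$, to reach all $B\in\mathcal{B}(H)$.

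Finally, I would note that for $x\notin N$ the kernel $q_{x,\cdot}$ is a bona fide version of $q_x[h\in\,\cdot\,|g']$: each $q_{x,y}$ is a probability measure and $y\mapsto q_{x,y}(B)$ is measurable because $q_{\cdot,\cdot}$ is a kernel, while the integrated identity just established is exactly the characterizing property of this regular conditional probability. Since $N$ is $\mathbf{Q}_g$-null, i.e.\ negligible for the law of $g$, this is precisely the asserted conclusion for $\mathbf{Q}$-a.e.\ $x\in G$.
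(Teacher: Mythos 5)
The paper does not actually prove this lemma: it is quoted verbatim from von Weizs\"acker \cite{Weiz83}, so there is no in-paper argument to compare against. Your blind proof is correct and is, as far as I can tell, the standard argument one would reconstruct for this statement. The three ingredients are all present and in the right order: (i) the consistency identity $\mathbf{Q}_{(g,g')}(dx,dy)=\mathbf{Q}_g(dx)\,(q_x)_{G'}(dy)$, which lets you rewrite the defining property of $q_{\cdot,\cdot}$ as an iterated integral over $\mathbf{Q}_g$; (ii) matching this against the defining property of $q_\cdot$ to get, for each fixed pair $(A,B)$, the identity $\int_A q_{x,y}(B)\,(q_x)_{G'}(dy)=q_x(A\times B)$ for $\mathbf{Q}_g$-a.e.\ $x$; and (iii) the passage from a per-$(A,B)$ null set to a single null set via countable generating algebras and a two-stage $\pi$-$\lambda$ extension, which is exactly where the Polish (countably generated Borel) hypothesis is used. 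You also correctly flag the two measurability points that are easy to gloss over: that $x\mapsto\int_A q_{x,y}(B)\,(q_x)_{G'}(dy)$ is measurable (needed to deduce a.e.\ equality of integrands from equality of integrals over all $A_0$), and that for fixed $x$ the map $y\mapsto q_{x,y}(B)$ is measurable as a section of a kernel, so that $q_{x,\cdot}$ genuinely is a kernel from $G'$ to $H$. I see no gap.
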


\begin{prop}
\label{prop:markov}
For $n\ge 0$, let the nonlinear filter $\pi_n$ be a version of the regular 
conditional probability $\mathbf{P}[X_n\in\,\cdot\,|Y_1,\ldots,Y_n]$.  
Then $(\pi_k)_{k\ge 0}$ is a $\mathcal{P}(E)$-valued Markov process with
transition kernel $\mathsf{\Pi}$ and initial measure $\pi_0\sim\delta_{\mu}$.
\end{prop}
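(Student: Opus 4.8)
The plan is to prove the one-step filtering recursion
$\pi_{n+1}=\Pi(\pi_n,Y_{n+1})$ $\mathbf{P}$-a.s.\ for every $n\ge 0$, together with the identification of the conditional law of $Y_{n+1}$ given the past observations; the Markov property and the explicit transition kernel $\mathsf{\Pi}$ then follow by the tower property. The initial condition is immediate: conditioning on $Y_1,\ldots,Y_0$ means conditioning on the trivial $\sigma$-field, so $\pi_0=\mathbf{P}[X_0\in\,\cdot\,]=\mu$ is deterministic and hence $\pi_0\sim\delta_\mu$.

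First I would compute the conditional law of the pair $(X_{n+1},Y_{n+1})$ given the observation $\sigma$-field $\mathcal{F}^Y_{1,n}$. Writing $\kappa(x,A)=\int \mathbf{1}_A(x',y)\,P(x,dx')\,\Phi(x',dy)$ for $A\in\mathcal{B}(E\times F)$, the hidden Markov structure together with the Markov property of $(X_k,Y_k)_{k\in\mathbb{Z}}$ gives $\mathbf{P}[(X_{n+1},Y_{n+1})\in A\mid \sigma\{X_k,Y_k:k\le n\}]=\kappa(X_n,A)$. Since this is $\sigma(X_n)$-measurable, the tower property yields the same expression when we condition instead on $\mathcal{F}^Y_{1,n}\vee\sigma(X_n)$; conditioning once more on $\mathcal{F}^Y_{1,n}$ and using $\pi_n=\mathbf{P}[X_n\in\,\cdot\,\mid\mathcal{F}^Y_{1,n}]$ produces
$$
\mathbf{P}[(X_{n+1},Y_{n+1})\in A\mid\mathcal{F}^Y_{1,n}]
= \int \kappa(x',A)\,\pi_n(dx') = P_{\pi_n}(A),
$$
which is precisely the measure $P_\nu$ of Lemma \ref{lem:meyer} with the random parameter $\nu=\pi_n$ frozen in.

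The crux is then to pass from this ``frozen'' conditional law to the genuine filter $\pi_{n+1}=\mathbf{P}[X_{n+1}\in\,\cdot\,\mid\mathcal{F}^Y_{1,n+1}]$, which is exactly the disintegration content of Lemma \ref{lem:weiz}. I would apply it with $g=(Y_1,\ldots,Y_n)$, $g'=Y_{n+1}$ and $h=X_{n+1}$: then $\mathbf{P}[(Y_{n+1},X_{n+1})\in\,\cdot\,\mid\mathcal{F}^Y_{1,n}]=P_{\pi_n}$ (up to relabeling of coordinates) plays the role of $q_{\cdot}$ and $\pi_{n+1}$ the role of $q_{\cdot,\cdot}$, so the lemma asserts that, for $\mathbf{P}$-a.e.\ value of the past observations, $\pi_{n+1}$ is a version of the conditional law of $X_{n+1}$ given $Y_{n+1}$ under $P_{\pi_n}$. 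By Lemma \ref{lem:meyer} that conditional law is $\Pi(\pi_n,Y_{n+1})$, whence $\pi_{n+1}=\Pi(\pi_n,Y_{n+1})$ $\mathbf{P}$-a.s. I expect this step---correctly matching the random parameter $\nu=\pi_n$ inside the frozen measure $P_\nu$ against the true conditioning, rather than merely taking the exchange for granted---to be the main obstacle, and Lemma \ref{lem:weiz} is the device that makes it rigorous.

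Finally, to read off the Markov property, note that $\pi_0,\ldots,\pi_n$ are all $\mathcal{F}^Y_{1,n}$-measurable. For bounded measurable $f$ on $\mathcal{P}(E)$, freezing the $\mathcal{F}^Y_{1,n}$-measurable random measure $\pi_n$ and integrating $Y_{n+1}$ against its conditional law (the $F$-marginal of $P_{\pi_n}$ found above) gives
$$
\mathbf{E}[f(\pi_{n+1})\mid\mathcal{F}^Y_{1,n}]
= \int f(\Pi(\pi_n,y))\,\pi_n(dx')\,P(x',dx)\,\Phi(x,dy)
= \int f(\nu')\,\mathsf{\Pi}(\pi_n,d\nu').
$$
Since the right-hand side is $\sigma(\pi_n)$-measurable and $\sigma(\pi_0,\ldots,\pi_n)\subset\mathcal{F}^Y_{1,n}$, a further application of the tower property yields $\mathbf{E}[f(\pi_{n+1})\mid\pi_0,\ldots,\pi_n]=\int f(\nu')\,\mathsf{\Pi}(\pi_n,d\nu')$, which is the asserted Markov property with transition kernel $\mathsf{\Pi}$.
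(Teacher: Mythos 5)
Your proof is correct and takes essentially the same route as the paper: identify the conditional law of $(X_{n+1},Y_{n+1})$ given $\mathcal{F}^Y_{1,n}$ as $P_{\pi_n}$, combine Lemma \ref{lem:weiz} with Lemma \ref{lem:meyer} and a.s.\ uniqueness of regular conditional probabilities to obtain the recursion $\pi_{n+1}=\Pi(\pi_n,Y_{n+1})$, and then deduce the Markov property with kernel $\mathsf{\Pi}$ by the tower property. The paper's proof is simply a terser rendering of the same argument, and your more detailed treatment of the disintegration step and of the passage from $\mathcal{F}^Y_{1,n}$ down to $\sigma(\pi_0,\ldots,\pi_n)$ fills in exactly the details the paper leaves implicit.
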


\begin{proof}
Fix $n\ge 1$. It is easily seen that for any $B\in\mathcal{B}(E\times F)$
$$
	\mathbf{P}[(X_n,Y_n)\in B|Y_1,\ldots,Y_{n-1}]=
	\int \mathbf{1}_A(x,y)\,\pi_{n-1}(dx')
	\,P(x',dx)\,\Phi(x,dy).
$$
Using Lemmas \ref{lem:weiz} and \ref{lem:meyer} and uniqueness of 
regular conditional probabilities, we find the recursive formula 
$\pi_n = \Pi(\pi_{n-1},Y_n)$ $\mathbf{P}$-a.s.  It follows easily that
$$
	\mathbf{P}[\pi_n\in A|Y_1,\ldots,Y_{n-1}] =
	\mathsf{\Pi}(\pi_{n-1},A)\quad\mathbf{P}\mbox{-a.s.}
	\quad\mbox{for all }A\in\mathcal{B}(\mathcal{P}(E)),
$$
completing the proof.
\end{proof}

We now establish the two elementary facts stated in the introduction.

\begin{lem}
Let $\mathsf{m}\in\mathcal{P}(\mathcal{P}(E))$ be any 
$\mathsf{\Pi}$-invariant probability measure.  Then the barycenter of
$\mathsf{m}$ is a $P$-invariant probability measure.
\end{lem}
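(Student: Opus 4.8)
The plan is to prove the stronger intertwining relation that the barycenter map carries the filter dynamics $\mathsf{\Pi}$ onto the hidden dynamics $P$, from which $P$-invariance of the barycenter of a $\mathsf{\Pi}$-invariant measure is immediate. Write $\beta(\mathsf{m})$ for the barycenter of $\mathsf{m}\in\mathcal{P}(\mathcal{P}(E))$, i.e.\ the measure $\beta(\mathsf{m})(A)=\int_{\mathcal{P}(E)}\nu(A)\,\mathsf{m}(d\nu)$ for $A\in\mathcal{B}(E)$; this is well defined (and is a probability measure, since $\beta(\mathsf{m})(E)=\int 1\,\mathsf{m}(d\nu)=1$) because $\nu\mapsto\nu(A)$ is measurable. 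The goal is the identity $\beta(\mathsf{m}\mathsf{\Pi})=\beta(\mathsf{m})P$ for every $\mathsf{m}$, where $\mathsf{m}\mathsf{\Pi}$ denotes the one-step evolution $\int\mathsf{\Pi}(\nu,\cdot)\,\mathsf{m}(d\nu)$ and $(\lambda P)(A)=\int P(x,A)\,\lambda(dx)$. Granting this, if $\mathsf{m}$ is $\mathsf{\Pi}$-invariant then $\beta(\mathsf{m})=\beta(\mathsf{m}\mathsf{\Pi})=\beta(\mathsf{m})P$, which is precisely $P$-invariance.

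The heart of the matter is a one-step unbiasedness property of the filter. Fixing $A\in\mathcal{B}(E)$, I would first compute, using the definition of $\mathsf{\Pi}$ and Tonelli's theorem,
$$\int_{\mathcal{P}(E)}\nu'(A)\,\mathsf{\Pi}(\nu,d\nu')=\int \Pi(\nu,y)(A)\,\nu(dx')\,P(x',dx)\,\Phi(x,dy).$$
The integrand $\Pi(\nu,y)(A)$ depends on $y$ alone, while $\nu(dx')\,P(x',dx)\,\Phi(x,dy)$ is exactly the measure $P_\nu$ on $E\times F$ of Lemma \ref{lem:meyer}; integrating out $x'$ and $x$ therefore replaces this by the $Y$-marginal of $P_\nu$. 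Since $\Pi(\nu,\cdot)$ is by construction a version of the regular conditional probability $P_\nu[X\in\,\cdot\,|Y]$, the defining (tower) property of conditional probability gives that the right-hand side equals $P_\nu[X\in A]$. Finally, because $\Phi(x,F)=1$, the $X$-marginal of $P_\nu$ is $P_\nu[X\in A]=\int P(x',A)\,\nu(dx')=(\nu P)(A)$. Thus $\int\nu'(A)\,\mathsf{\Pi}(\nu,d\nu')=(\nu P)(A)$.

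With this in hand the intertwining follows by one further application of Tonelli:
$$\beta(\mathsf{m}\mathsf{\Pi})(A)=\int\Big(\int\nu'(A)\,\mathsf{\Pi}(\nu,d\nu')\Big)\mathsf{m}(d\nu)=\int(\nu P)(A)\,\mathsf{m}(d\nu)=\int_E P(x,A)\,\beta(\mathsf{m})(dx)=(\beta(\mathsf{m})P)(A),$$
completing the argument as explained above. The only genuinely delicate step is the identification of the triple integral with $P_\nu[X\in A]$: this is the assertion that averaging the posterior $\Pi(\nu,Y)$ against the law of $Y$ reproduces the one-step prediction of $\nu$, which is nothing but the tower property applied to the regular conditional probability furnished by Lemma \ref{lem:meyer}. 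Everything else is routine: the measurability of $\nu\mapsto\nu(A)$ (needed both for the barycenter to be well defined and for the joint measurability of $(\nu,y)\mapsto\Pi(\nu,y)(A)$, which follows by composing the measurable map $\Pi$ of Lemma \ref{lem:meyer} with the evaluation $\lambda\mapsto\lambda(A)$), and the repeated use of Tonelli, legitimate since all the kernels are nonnegative.
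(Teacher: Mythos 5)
Your proof is correct and follows essentially the same route as the paper's: both reduce the claim to the one-step identity $\int\nu'(A)\,\mathsf{\Pi}(\nu,d\nu')=\mathbf{E}_{P_\nu}[P_\nu(X\in A\mid Y)]=(\nu P)(A)$, obtained from the definition of $\mathsf{\Pi}$ and the tower property of the regular conditional probability from Lemma \ref{lem:meyer}, and then integrate against $\mathsf{m}$. Your write-up merely makes the Tonelli and measurability steps explicit.
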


\begin{proof}
Let $m\in\mathcal{P}(E)$ be the barycenter of $\mathsf{m}$.  By 
definition,
$$
	m(A) = \int \nu(A)\,\mathsf{m}(d\nu) =
	\int \nu(A)\,\mathsf{\Pi}(\nu',d\nu)\,\mathsf{m}(d\nu')
	\quad\mbox{for }A\in\mathcal{B}(E).
$$
But note that $\int \nu(A)\,\mathsf{\Pi}(\nu',d\nu) =
\mathbf{E}_{P_{\nu'}}[P_{\nu'}(X\in A|Y)]=
\int P(x,A)\,\nu'(dx)$ by the definition of $\mathsf{\Pi}$.
It follows directly that $mP=m$, that is, $m$ is $P$-invariant.
\end{proof}

\begin{lem}
\label{lem:exist}
There is at least one $\mathsf{\Pi}$-invariant measure
with barycenter $\mu$.
\end{lem}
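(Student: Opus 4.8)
The plan is to realize the invariant measure as the one-dimensional marginal law of the \emph{stationary filter}, so that invariance is inherited directly from stationarity of the underlying hidden Markov model; this sidesteps any appeal to a Feller property of $\mathsf{\Pi}$, which is not available at the level of generality of this appendix. Concretely, since $E$ is Polish there is a measurable map $\Psi$ realizing the regular conditional probability $\mathbf{P}[X_0\in\,\cdot\,|\mathcal{F}_{-\infty,0}^Y]$ as a function of $(Y_k)_{k\le 0}$, where $\mathcal{F}_{-\infty,n}^Y:=\sigma\{Y_k:k\le n\}$. I set $\bar\pi_n:=\Psi((Y_k)_{k\le n})$, so that $\bar\pi_n$ is a version of $\mathbf{P}[X_n\in\,\cdot\,|\mathcal{F}_{-\infty,n}^Y]$ and, being a fixed shift-covariant functional of the observations, $(\bar\pi_n)_{n\in\mathbb{Z}}$ is a stationary process. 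Writing $\mathsf{m}$ for the common law of the $\bar\pi_n$, the tower property gives $\int\nu(A)\,\mathsf{m}(d\nu)=\mathbf{E}[\bar\pi_0(A)]=\mathbf{P}[X_0\in A]=\mu(A)$, so $\mathsf{m}$ has barycenter $\mu$. It remains to show that $\mathsf{m}$ is $\mathsf{\Pi}$-invariant.

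The key step is the one-step filtering recursion $\bar\pi_{n+1}=\Pi(\bar\pi_n,Y_{n+1})$ $\mathbf{P}$-a.s., which I would establish exactly as in Proposition \ref{prop:markov}, but conditioning on the full observation past $\mathcal{F}_{-\infty,n}^Y$ instead of on finitely many observations. First, the Markov property of $(X_k,Y_k)_{k\in\mathbb{Z}}$ yields, with $\mathcal{F}_{-\infty,n}^{X,Y}:=\sigma\{(X_k,Y_k):k\le n\}$,
$$
	\mathbf{P}[(X_{n+1},Y_{n+1})\in B\,|\,\mathcal{F}_{-\infty,n}^{X,Y}]
	=\int\mathbf{1}_B(x,y)\,P(X_n,dx)\,\Phi(x,dy).
$$
Conditioning this identity down to $\mathcal{F}_{-\infty,n}^Y$ and using $\mathbf{P}[X_n\in\,\cdot\,|\mathcal{F}_{-\infty,n}^Y]=\bar\pi_n$ gives $\mathbf{P}[(X_{n+1},Y_{n+1})\in B\,|\,\mathcal{F}_{-\infty,n}^Y]=P_{\bar\pi_n}(B)$. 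I then apply Lemma \ref{lem:weiz} with $(g,g',h)$ taken to be the observation past, $Y_{n+1}$, and $X_{n+1}$: for a.e.\ value of the past the relevant kernel is $P_{\bar\pi_n}$, whose conditional law of $X_{n+1}$ given $Y_{n+1}$ is, by Lemma \ref{lem:meyer}, $\Pi(\bar\pi_n,Y_{n+1})$. Lemma \ref{lem:weiz} identifies this with the regular conditional probability of $X_{n+1}$ given $\mathcal{F}_{-\infty,n}^Y\vee\sigma(Y_{n+1})=\mathcal{F}_{-\infty,n+1}^Y$, which is precisely $\bar\pi_{n+1}$.

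Granting the recursion, the Markov property of $(\bar\pi_n)$ with kernel $\mathsf{\Pi}$ follows readily. Since each $\bar\pi_k$ with $k\le n$ is $\mathcal{F}_{-\infty,n}^Y$-measurable, we have $\sigma(\bar\pi_k:k\le n)\subseteq\mathcal{F}_{-\infty,n}^Y$; and using that the conditional law of $Y_{n+1}$ given $\mathcal{F}_{-\infty,n}^Y$ is the $F$-marginal of $P_{\bar\pi_n}$, I compute $\mathbf{P}[\bar\pi_{n+1}\in A\,|\,\mathcal{F}_{-\infty,n}^Y]=\int\mathbf{1}_A(\Pi(\bar\pi_n,y))\,\bar\pi_n(dx')\,P(x',dx)\,\Phi(x,dy)=\mathsf{\Pi}(\bar\pi_n,A)$, which depends only on $\bar\pi_n$. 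By the tower property the same holds conditionally on $\sigma(\bar\pi_k:k\le n)$, so $(\bar\pi_n)$ is a stationary Markov chain with transition kernel $\mathsf{\Pi}$, whence $\mathsf{m}\mathsf{\Pi}=\mathrm{law}(\bar\pi_1)=\mathsf{m}$. I expect the main obstacle to be the recursion of the second paragraph: the descent from the joint past $\mathcal{F}_{-\infty,n}^{X,Y}$ to the observation past $\mathcal{F}_{-\infty,n}^Y$ is exactly the delicate conditioning-under-a-conditional-probability step for which Lemma \ref{lem:weiz} was introduced, and one must also take care to choose $\bar\pi_n$ as a genuinely shift-covariant version so that the stationarity of $(\bar\pi_n)$, and hence the invariance of $\mathsf{m}$, is not merely formal.
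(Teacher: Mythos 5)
Your proposal is correct and is essentially the paper's own proof: the paper likewise takes $\tilde\pi_n=\mathbf{P}[X_n\in\,\cdot\,|\mathcal{F}_{-\infty,n}^Y]$, shows it is a stationary Markov process with kernel $\mathsf{\Pi}$ by repeating the argument of Proposition \ref{prop:markov} with the infinite observation past, and identifies the barycenter via the tower property. Your added care in fixing a single shift-covariant version $\Psi$ to justify stationarity is a detail the paper leaves implicit, but it is the same argument.
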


\begin{proof}
For $n\in\mathbb{Z}$, let $\tilde\pi_n$ be a version of the regular 
conditional probability 
$\mathbf{P}[X_n\in\,\cdot\,|\mathcal{F}_{-\infty,n}^Y]$.  
Proceeding exactly as in the proof of Proposition \ref{prop:markov}, we 
find that $(\tilde\pi_k)_{k\in\mathbb{Z}}$ is a $\mathcal{P}(E)$-valued 
Markov process with transition kernel $\mathsf{\Pi}$.  But as the 
underlying hidden Markov model $(X_k,Y_k)_{k\in\mathbb{Z}}$ is stationary, 
clearly $(\tilde\pi_k)_{k\in\mathbb{Z}}$ is also stationary.  Therefore, 
the law of $\tilde\pi_0$ is a $\mathsf{\Pi}$-invariant measure, and its
barycenter is $\mu$ by the tower property of the conditional expectation.
\end{proof}

\subsection{Proof of Theorem \ref{thm:exchg}: sufficiency}

The proof is essentially contained in Kunita \cite{Kun71}, though we are 
careful here not to exploit any unnecessary assumptions.  The idea is to 
introduce a suitable randomization, which is most conveniently done in a 
canonical probability model.  To this end, define the Polish 
space $\Omega_0=\mathcal{P}(E)\times E\times (E\times F)^{\mathbb{N}}$ 
with the canonical projections $m_0:\Omega_0\to\mathcal{P}(E)$ 
and (with a slight abuse of notation) $X_0:\Omega_0\to E$,
$(X_k,Y_k)_{k\ge 1}:\Omega_0\to (E\times F)^{\mathbb{N}}$.  
Given $\mathsf{m}\in\mathcal{P}(\mathcal{P}(E))$, 
we define a probability measure $\mathbf{P}_{\mathsf{m}}$ on $\Omega_0$ 
with the finite dimensional distributions
\begin{multline*}
	\mathbf{P}_{\mathsf{m}}((m_0,X_0,\ldots,X_n,Y_1,\ldots,Y_n)\in A) 
	= \mbox{}\\
	\int \mathbf{1}_A(\nu,x_0,\ldots,x_n,y_1,\ldots,y_n)\,
	\nu(dx_0)\,P(x_0,dx_1)\,\Phi(x_1,dy_1)\cdots \mbox{}\\
	P(x_{n-1},dx_n)
	\,\Phi(x_n,dy_n)\,\mathsf{m}(d\nu).
\end{multline*}
We now define for $n\ge 0$ three distinguished nonlinear filters:
\begin{equation*}
\begin{array}{ll}
	\pi_n^{\rm min} & \mbox{} := \mathbf{P}_{\mathsf{m}}[X_n\in\,\cdot\,|
	Y_1,\ldots,Y_n],\\
	\pi_n^{\mathsf{m}} & \mbox{} := \mathbf{P}_{\mathsf{m}}[X_n\in\,\cdot\,|
	m_0,Y_1,\ldots,Y_n],\\
	\pi_n^{\rm max} & \mbox{} := \mathbf{P}_{\mathsf{m}}[X_n\in\,\cdot\,|
	m_0,X_0,Y_1,\ldots,Y_n].
\end{array}
\end{equation*}
We now have the following easy result. Here 
$\delta_\mu,\varepsilon_\mu\in\mathcal{P}(\mathcal{P}(E))$ are defined by
$\delta_\mu(A)=\mathbf{1}_{\mu\in A}$ (as usual) and
$\varepsilon_\mu(A) = \int\mathbf{1}_{\delta_x\in A}\,\mu(dx)$.

\begin{lem}
\label{lem:randomiz}
Let $\mathsf{m}\in\mathcal{P}(\mathcal{P}(E))$ be any probability measure
with barycenter $\mu$.  Then $(\pi_n^{\rm min})_{n\ge 0}$,
$(\pi_n^{\mathsf{m}})_{n\ge 0}$,
$(\pi_n^{\rm max})_{n\ge 0}$ are $\mathcal{P}(E)$-valued Markov processes 
under $\mathbf{P}_{\mathsf{m}}$ with transition kernel $\mathsf{\Pi}$ and
initial measures $\delta_\mu$, $\mathsf{m}$, $\varepsilon_\mu$, 
respectively.
\end{lem}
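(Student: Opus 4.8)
The plan is to reduce each of the three filters to the \emph{ordinary} nonlinear filter of a hidden Markov model, so that the Markov property with transition kernel $\mathsf{\Pi}$ follows from the proof of Proposition \ref{prop:markov}; the three cases will differ only in the initial randomization on which one conditions. Throughout I will use that the recursion $\pi_n=\Pi(\pi_{n-1},Y_n)$ derived in Proposition \ref{prop:markov}, and hence the resulting Markov property with kernel $\mathsf{\Pi}$, depends only on the hidden Markov structure and is valid for an arbitrary initial law of $X_0$; the invariance of $\mu$ plays no role in that step, entering only through the initial distribution.

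The case of $\pi^{\rm min}$ is immediate. Integrating the finite dimensional distributions of $\mathbf{P}_{\mathsf{m}}$ over $m_0$ against $\mathsf{m}$ and using $\int\nu(dx_0)\,\mathsf{m}(d\nu)=\mu(dx_0)$, one sees that the marginal law of $(X_0,(X_k,Y_k)_{k\ge 1})$ under $\mathbf{P}_{\mathsf{m}}$ is exactly that of the stationary hidden Markov model with $X_0\sim\mu$. Thus $(\pi_n^{\rm min})_{n\ge 0}$ is literally the filter of Proposition \ref{prop:markov}, giving the Markov property with kernel $\mathsf{\Pi}$ and initial value $\pi_0^{\rm min}=\mathbf{P}_{\mathsf{m}}[X_0\in\,\cdot\,]=\mu$, i.e.\ initial law $\delta_\mu$.

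For $\pi^{\mathsf{m}}$ and $\pi^{\rm max}$ the idea is the same, but one first conditions on the initial randomization. Given $m_0=\nu$, the conditional law of $(X_k,Y_k)_{k\ge 0}$ under $\mathbf{P}_{\mathsf{m}}$ is that of a hidden Markov model with initial distribution $\nu$; given $(m_0,X_0)=(\nu,x_0)$, it is that of a hidden Markov model started from the deterministic point $x_0$, i.e.\ with initial law $\delta_{x_0}$ (and independent of $\nu$). Applying Lemma \ref{lem:weiz} with $g$ the initial randomization ($m_0$, respectively $(m_0,X_0)$), $g'=(Y_1,\ldots,Y_n)$ and $h=X_n$, I identify $\pi_n^{\mathsf{m}}$ (respectively $\pi_n^{\rm max}$) $\mathbf{P}_{\mathsf{m}}$-a.s.\ with the ordinary filter of the conditional model evaluated at the realized initial randomization. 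By the same argument (valid for any initial law of $X_0$), this conditional filter is a Markov process with kernel $\mathsf{\Pi}$ and conditional initial value $m_0$ (respectively $\delta_{X_0}$). Since in each case $\pi_0$ generates the conditioning field ($\pi_0^{\mathsf{m}}=m_0$ and $\pi_0^{\rm max}=\delta_{X_0}$ are measurable functions of the initial randomization), the conditional Markov property upgrades to the unconditional one. Finally $m_0\sim\mathsf{m}$ gives the initial law $\mathsf{m}$ for $\pi^{\mathsf{m}}$, while $X_0\sim\mu$ gives $\delta_{X_0}\sim\varepsilon_\mu$ for $\pi^{\rm max}$.

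I expect the main obstacle to be the rigorous justification of ``conditioning inside a regular conditional probability,'' namely that the filter computed \emph{within} the conditional hidden Markov model $\mathbf{P}_{\mathsf{m}}[\,\cdot\,|m_0]$ (respectively $\mathbf{P}_{\mathsf{m}}[\,\cdot\,|m_0,X_0]$) agrees $\mathbf{P}_{\mathsf{m}}$-a.s.\ with the genuine conditional expectation $\mathbf{P}_{\mathsf{m}}[X_n\in\,\cdot\,|m_0,Y_1,\ldots,Y_n]$ (respectively with $X_0$ adjoined). This is precisely the content of Lemma \ref{lem:weiz}; the only care needed is in choosing the product decomposition $G\times G'\times H$ and checking measurability of the resulting kernels, after which uniqueness of regular conditional probabilities closes the argument exactly as in Proposition \ref{prop:markov}.
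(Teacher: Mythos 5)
Your proposal is correct and follows essentially the same route as the paper, whose proof of this lemma simply defers to Proposition \ref{prop:markov}: in each case one repeats that argument with the conditioning $\sigma$-field enlarged by the initial randomization ($m_0$, resp.\ $(m_0,X_0)$), using Lemma \ref{lem:weiz} to get the recursion $\pi_n=\Pi(\pi_{n-1},Y_n)$ and reading off the initial law. One tiny imprecision: $\pi_0^{\rm max}=\delta_{X_0}$ does not generate $\sigma\{m_0,X_0\}$, but this is harmless since the unconditional Markov property follows from the tower property once the filtration of $(\pi_k^{\rm max})$ is contained in $\sigma\{m_0,X_0,Y_1,\ldots,Y_{n-1}\}$, which it is.
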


\begin{proof}
The proof is identical to that of Proposition \ref{prop:markov}.
\end{proof}

The following result completes the proof of sufficiency.

\begin{prop}
Let $p\in\mathbb{N}$, let $f_i:E\to\mathbb{R}$,
$i=1,\ldots,p$ be bounded measurable functions, and let
$\kappa:\mathbb{R}^p\to\mathbb{R}$ be convex.
Define the bounded measurable function $F:\mathcal{P}(E)\to\mathbb{R}$ as
$F(\nu) = \kappa\left(\textstyle{\int f_1(x)\,\nu(dx),\ldots,\int 
f_p(x)\,\nu(dx)}\right)$.  Finally, let 
$\mathsf{m}\in\mathcal{P}(\mathcal{P}(E))$ be any
$\mathsf{\Pi}$-invariant measure with barycenter $\mu$.  Then
\begin{multline*}
	\mathbf{E}\left[
	\kappa\left(\mathbf{E}[f_1(X_0)|\mathcal{F}^Y_{-\infty,0}],\ldots,
	\mathbf{E}[f_p(X_0)|\mathcal{F}^Y_{-\infty,0}]\right)\right]
	\le \int F(\nu)\,\mathsf{m}(d\nu) \\ \mbox{}\le
	\mathbf{E}\left[
	\kappa\left(\mathbf{E}[f_1(X_0)|\mathcal{G}^{\phantom{Y}}_{-\infty,0}],\ldots,
	\mathbf{E}[f_p(X_0)|\mathcal{G}^{\phantom{Y}}_{-\infty,0}]\right)\right],
\end{multline*}
where $\mathcal{G}_{-\infty,0}:=\bigcap_n(\mathcal{F}^Y_{-\infty,0}
\vee\mathcal{F}^X_{-\infty,n})$.  In particular, if (\ref{eq:exchg}) 
holds, $\mathsf{m}$ coincides with the distinguished 
$\mathsf{\Pi}$-invariant measure defined in the proof of Lemma \ref{lem:exist}.
\end{prop}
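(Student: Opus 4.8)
The plan is to exploit the three distinguished filters $(\pi_n^{\rm min})$, $(\pi_n^{\mathsf m})$, $(\pi_n^{\rm max})$ of Lemma \ref{lem:randomiz}, which are ordered by the amount of conditioning information, together with Jensen's inequality for the convex map $\kappa$. Writing $\mathcal Y_n=\sigma\{Y_1,\ldots,Y_n\}$, the tower property gives $\int f_i\,d\pi_n^{\rm min}=\mathbf E_{\mathsf m}[\int f_i\,d\pi_n^{\mathsf m}\mid\mathcal Y_n]$ and $\int f_i\,d\pi_n^{\mathsf m}=\mathbf E_{\mathsf m}[\int f_i\,d\pi_n^{\rm max}\mid m_0,\mathcal Y_n]$. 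Applying conditional Jensen coordinatewise and taking expectations, I obtain for every $n\ge 0$ the sandwich $\mathbf E_{\mathsf m}[F(\pi_n^{\rm min})]\le\mathbf E_{\mathsf m}[F(\pi_n^{\mathsf m})]\le\mathbf E_{\mathsf m}[F(\pi_n^{\rm max})]$. Since $\mathsf m$ is $\mathsf\Pi$-invariant and $\pi_n^{\mathsf m}$ has initial law $\mathsf m$, the middle term equals $\int F\,d\mathsf m$ for every $n$. Thus the whole task reduces to identifying the limits of the two outer terms as $n\to\infty$.

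For the outer terms I would pass to the two-sided stationary model. Under $\mathbf P_{\mathsf m}$ the marginal law of $(X_0,\ldots,X_n,Y_1,\ldots,Y_n)$ is that of the stationary hidden Markov model, since its barycenter $\mu$ is $P$-invariant; shifting time by $-n$ then shows that $F(\pi_n^{\rm min})$ has the same law as $\kappa(\mathbf E[f_1(X_0)\mid Y_{-n+1},\ldots,Y_0],\ldots)$, and, using that conditioning on $m_0$ cannot affect the dynamics once $X_0$ is known, that $F(\pi_n^{\rm max})$ has the same law as $\kappa(\mathbf E[f_1(X_0)\mid X_{-n},Y_{-n+1},\ldots,Y_0],\ldots)$. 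The min side is then immediate: as $\sigma\{Y_{-n+1},\ldots,Y_0\}\uparrow\mathcal F^Y_{-\infty,0}$, forward martingale convergence together with bounded convergence (using continuity of the convex function $\kappa$ and boundedness of the $f_i$) gives $\mathbf E_{\mathsf m}[F(\pi_n^{\rm min})]\to\mathbf E[\kappa(\mathbf E[f_1(X_0)\mid\mathcal F^Y_{-\infty,0}],\ldots)]$, which yields the lower bound.

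The max side is the delicate step, and it is here that the $\sigma$-field $\mathcal G_{-\infty,0}$ is forced to appear. I would first use the Markov property of the hidden chain together with the conditional independence built into the model to upgrade the conditioning: given the past hidden states $\mathcal F^X_{-\infty,-n}$, the earlier observations $\mathcal F^Y_{-\infty,-n}$ are merely independent noisy functions of states already known and so carry no additional information about $X_0$, while the Markov property lets me replace $X_{-n}$ by the full past $\mathcal F^X_{-\infty,-n}$. This identifies the max filter's conditional law of $X_0$ with the conditioning on $\mathcal F^Y_{-\infty,0}\vee\mathcal F^X_{-\infty,-n}$. As $n\to\infty$ these $\sigma$-fields decrease to $\mathcal G_{-\infty,0}=\bigcap_n(\mathcal F^Y_{-\infty,0}\vee\mathcal F^X_{-\infty,n})$, so reverse martingale convergence and bounded convergence give $\mathbf E_{\mathsf m}[F(\pi_n^{\rm max})]\to\mathbf E[\kappa(\mathbf E[f_1(X_0)\mid\mathcal G_{-\infty,0}],\ldots)]$, the upper bound.

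Finally, for the concluding assertion I note that the distinguished measure of Lemma \ref{lem:exist} is the law of $\tilde\pi_0=\mathbf P[X_0\in\,\cdot\,\mid\mathcal F^Y_{-\infty,0}]$, whose $F$-integral is exactly the lower bound. Under (\ref{eq:exchg}) the lower and upper bounds coincide, so $\int F\,d\mathsf m$ takes the same value for every $\mathsf\Pi$-invariant $\mathsf m$ with barycenter $\mu$, for all such $F$. Specializing $\kappa(t)=e^{\langle\lambda,t\rangle}$, which is convex for every $\lambda\in\mathbb R^p$, shows that the joint law under $\mathsf m$ of $(\int f_1\,d\nu,\ldots,\int f_p\,d\nu)$ is determined through its Laplace transform, for arbitrary bounded measurable $f_1,\ldots,f_p$; since the maps $\nu\mapsto\int f\,d\nu$ generate $\mathcal B(\mathcal P(E))$, this pins down $\mathsf m$ uniquely, so $\mathsf m$ equals the distinguished invariant measure. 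I expect the main obstacle to be the max-side identification of the conditioning $\sigma$-field, since everything hinges on correctly extracting $\mathcal G_{-\infty,0}$ from the reverse martingale limit.
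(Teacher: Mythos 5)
Your proof follows essentially the same route as the paper: the Jensen sandwich between $\pi_n^{\rm min}$, $\pi_n^{\mathsf m}$, $\pi_n^{\rm max}$, the identification of the outer terms via stationarity and the conditional independence of $\mathcal{F}^X_{-n+1,0}\vee\mathcal{F}^Y_{-n+1,0}$ from the remote past given $X_{-n}$, and forward/reverse martingale convergence. The only deviation is in the final measure-determining step, where you use the convex functions $\kappa(t)=e^{\langle\lambda,t\rangle}$ and a Laplace-transform argument (valid since the $\int f_i\,d\nu$ are bounded, so the moment generating function exists everywhere and determines the joint law), whereas the paper approximates continuous $\kappa$ by differences of convex functions via Stone--Weierstrass and then passes to indicators; your variant is correct and arguably more economical.
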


\begin{proof}
Note that as $\kappa$ is convex, it is continuous, hence $F$ is bounded 
and measurable.  It is an immediate consequence of Jensen's inequality 
that
$$
	\mathbf{E}_{\mathsf{m}}[F(\pi_n^{\rm min})] \le
	\mathbf{E}_{\mathsf{m}}[F(\pi_n^{\mathsf{m}})] =
	\int F(\nu)\,\mathsf{m}(d\nu) \le
	\mathbf{E}_{\mathsf{m}}[F(\pi_n^{\rm max})]
$$
for every $n\ge 0$, where we have used Lemma \ref{lem:randomiz} and the 
$\mathsf{\Pi}$-invariance of $\mathsf{m}$ to obtain the middle equality.
Using Lemma \ref{lem:randomiz} and the stationarity of 
$(X_k,Y_k)_{k\in\mathbb{Z}}$ under $\mathbf{P}$, it is also easily seen 
that the laws of $\pi_n^{\rm min}(f)$, $\pi_n^{\rm max}(f)$ under
$\mathbf{P}_{\mathsf{m}}$ coincide with the laws of
$\mathbf{E}[f(X_0)|Y_{-n+1},\ldots,Y_0]$,
$\mathbf{E}[f(X_0)|X_{-n},Y_{-n+1},\ldots,Y_0]$ under $\mathbf{P}$,
respectively.  We therefore have for every $n\ge 0$
\begin{multline*}
	\mathbf{E}\left[
	\kappa\left(\mathbf{E}[f_1(X_0)|\mathcal{F}^Y_{-n+1,0}],\ldots,
	\mathbf{E}[f_p(X_0)|\mathcal{F}^Y_{-n+1,0}]\right)\right]
	\le \int F(\nu)\,\mathsf{m}(d\nu) \\ \mbox{}\le
	\mathbf{E}\left[
	\kappa\left(\mathbf{E}[f_1(X_0)|
	\mathcal{G}^{\phantom{Y}}_{-n,0}],\ldots,
	\mathbf{E}[f_p(X_0)|
	\mathcal{G}^{\phantom{Y}}_{-n,0}]\right)\right],
\end{multline*}
where $\mathcal{G}_{-n,0}:=\mathcal{F}^Y_{-\infty,0}
\vee\mathcal{F}^X_{-\infty,-n}$ and we have used the fact that
$$
	\mathbf{E}[f(X_0)|X_{-n},Y_{-n+1},\ldots,Y_0]=
	\mathbf{E}[f(X_0)|\mathcal{G}_{-n,0}]\quad
	\mathbf{P}\mbox{-a.s.}
$$ 
as $\mathcal{F}_{-n+1,0}^X\vee\mathcal{F}_{-n+1,0}^Y$ is conditionally
independent of $\mathcal{F}_{-\infty,-n-1}^X\vee\mathcal{F}_{-\infty,-n}^Y$
given $X_{-n}$.  But as $\kappa$ is continuous, the equation display in 
the statement of the result follows by letting $n\to\infty$ using
the martingale convergence theorem.

Now suppose that (\ref{eq:exchg}) holds, and denote by $\mathsf{m}_0$ be 
the distinguished $\mathsf{\Pi}$-invariant measure obtained in the proof 
of Lemma \ref{lem:exist}.  Then we have evidently shown that
$\int F(\nu)\,\mathsf{m}(d\nu)=\int F(\nu)\,\mathsf{m}_0(d\nu)$ for all 
functions $F$ of the form $F(\nu) = \kappa\left(\textstyle{\int 
f_1(x)\,\nu(dx),\ldots,\int f_p(x)\,\nu(dx)}\right)$ for any $p$, bounded 
measurable $f_1,\ldots,f_p$ and convex $\kappa$.  We claim that this class 
of functions is measure-determining, so we can conclude that 
$\mathsf{m}=\mathsf{m}_0$.  To establish the claim, first note that by the 
Stone-Weierstrass theorem, any continuous function on $\mathbb{R}^p$ can 
be approximated uniformly on any compact set by the difference of convex 
functions.  As $f_1,\ldots,f_p$ are bounded (hence take values in a 
compact subset of $\mathbb{R}^p$), it therefore suffices to assume that 
$\kappa$ is continuous rather than convex.  Next, note that the indicator 
function $\mathbf{1}_A$ of any open subset $A$ of $\mathbb{R}^p$
can be obtained as the increasing limit of nonnegative continuous 
functions.  It therefore suffices to assume that $\kappa$ is the indicator 
of an open subset of $\mathbb{R}^p$.  But any probability measure on a 
Polish space is regular, so it suffices to assume that $\kappa$ is the 
indicator function of a Borel subset of $\mathbb{R}^p$.  The proof is 
completed by an application of the Dynkin system lemma.
\end{proof}

\subsection{Proof of Theorem \ref{thm:exchg}: necessity}

We will in fact prove necessity under a weaker assumption than stated 
in the theorem: the key assumption is 
\begin{equation}
\label{eq:faspt}
	\bigcap_{n\le 0}\big(\mathcal{F}_{-\infty,k}^Y\vee
	\mathcal{F}_{-\infty,n}^X\big) =
	\mathcal{F}_{1,k}^Y\vee
	\bigcap_{n\le 0}\big(\mathcal{F}_{-\infty,0}^Y\vee
	\mathcal{F}_{-\infty,n}^X\big)\quad
	\mathbf{P}\mbox{-a.s.}\quad
	\forall\,k\in\mathbb{N}.
\end{equation}
The assumption in the theorem that $\Phi$ possesses a transition density 
only enters the proof inasmuch as it guarantees the validity (\ref{eq:faspt}).
Let us note that the assumption of the theorem is itself weaker than 
nondegeneracy of the observations, as the transition density is not 
required to be strictly positive here.

\begin{lem}
Suppose there exists a $\sigma$-finite reference measure $\varphi$ on $F$ 
and a transition density $g:E\times F\to [0,\infty[\mbox{}$ such that
$\Phi(x,A) = \int \mathbf{1}_A(y)\,g(x,y)\,\varphi(dy)$ for all $x\in E$, 
$A\in\mathcal{B}(F)$.  Then the identity (\ref{eq:faspt}) holds true.
\end{lem}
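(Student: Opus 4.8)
The plan is to deduce (\ref{eq:faspt}) from the \emph{trivial} exchange of intersection and supremum that holds under independence, by transferring it across an absolutely continuous change of measure that ``detaches'' the finitely many observations $Y_1,\ldots,Y_k$. Fix $k\in\mathbb{N}$, write $\mathcal{C}:=\mathcal{F}^Y_{1,k}$ and $\mathcal{H}_n:=\mathcal{F}^Y_{-\infty,0}\vee\mathcal{F}^X_{-\infty,n}$ for $n\le 0$, so that $\mathcal{H}_n$ decreases to $\mathcal{H}_\infty:=\bigcap_{n\le 0}\mathcal{H}_n$ and, since $\mathcal{F}^Y_{-\infty,k}=\mathcal{F}^Y_{-\infty,0}\vee\mathcal{C}$, the identity (\ref{eq:faspt}) reads $\bigcap_{n\le 0}(\mathcal{C}\vee\mathcal{H}_n)=\mathcal{C}\vee\mathcal{H}_\infty$. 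One inclusion is measure-free: as $\mathcal{C}\subseteq\mathcal{C}\vee\mathcal{H}_n$ and $\mathcal{H}_\infty\subseteq\mathcal{H}_n\subseteq\mathcal{C}\vee\mathcal{H}_n$ for every $n$, both generators of $\mathcal{C}\vee\mathcal{H}_\infty$ lie in each $\mathcal{C}\vee\mathcal{H}_n$, hence in the intersection. The whole content is the reverse inclusion $\bigcap_{n\le 0}(\mathcal{C}\vee\mathcal{H}_n)\subseteq\mathcal{C}\vee\mathcal{H}_\infty$.

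First I would record an abstract lemma: if $(\mathcal{B}_n)$ decreases to $\mathcal{B}_\infty$ and $\mathcal{C}$ is independent of $\mathcal{B}_{\max}:=\bigvee_n\mathcal{B}_n$, then $\bigcap_n(\mathcal{C}\vee\mathcal{B}_n)=\mathcal{C}\vee\mathcal{B}_\infty$. The nontrivial inclusion amounts to showing that $\mathbf{E}[W\,|\,\bigcap_n(\mathcal{C}\vee\mathcal{B}_n)]$ is $\mathcal{C}\vee\mathcal{B}_\infty$-measurable for every bounded $\mathcal{C}\vee\mathcal{B}_{\max}$-measurable $W$; by the functional monotone class theorem it suffices to treat products $W=Z_1Z_2$ with $Z_1$ bounded $\mathcal{C}$-measurable and $Z_2$ bounded $\mathcal{B}_{\max}$-measurable. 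For such $W$, independence of $\mathcal{C}$ and $\mathcal{B}_{\max}$ gives $\mathcal{C}\perp\mathcal{B}_{\max}\mid\mathcal{B}_n$, whence $\mathbf{E}[Z_1Z_2\,|\,\mathcal{C}\vee\mathcal{B}_n]=Z_1\,\mathbf{E}[Z_2\,|\,\mathcal{B}_n]$, and letting $n\to\infty$ the reverse (Lévy downward) martingale convergence theorem yields $\mathbf{E}[Z_1Z_2\,|\,\bigcap_n(\mathcal{C}\vee\mathcal{B}_n)]=Z_1\,\mathbf{E}[Z_2\,|\,\mathcal{B}_\infty]$, which is manifestly $\mathcal{C}\vee\mathcal{B}_\infty$-measurable.

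The point is that under $\mathbf{P}$ the field $\mathcal{C}=\mathcal{F}^Y_{1,k}$ is \emph{not} independent of $\mathcal{H}_0$, so the abstract lemma does not apply directly; this is exactly where the transition density enters. Let $\varphi_0\sim\varphi$ be a probability measure (available as $\varphi$ is $\sigma$-finite), and let $\mathbf{Q}$ be the law of the same hidden Markov model in which the conditional law of $(Y_1,\ldots,Y_k)$ given everything else is replaced by $\varphi_0^{\otimes k}$, taken independent of $\mathcal{F}^X_{-\infty,\infty}\vee\mathcal{F}^Y_{\mathbb{Z}\setminus\{1,\ldots,k\}}$. Since under $\mathbf{P}$ the conditional law of $(Y_1,\ldots,Y_k)$ given $\mathcal{F}^X_{-\infty,\infty}\vee\mathcal{F}^Y_{-\infty,0}$ has density $\prod_{j=1}^k g(X_j,y_j)$ with respect to $\varphi^{\otimes k}\sim\varphi_0^{\otimes k}$, one has $\mathbf{P}\ll\mathbf{Q}$ with $d\mathbf{P}/d\mathbf{Q}=\prod_{j=1}^k g(X_j,Y_j)\,(d\varphi/d\varphi_0)(Y_j)$, a density in only finitely many coordinates; in particular no strict positivity of $g$ is needed. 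Under $\mathbf{Q}$, by construction $\mathcal{C}$ is independent of $\mathcal{H}_0\supseteq\mathcal{H}_n$ for all $n\le 0$, so the abstract lemma (with $\mathcal{B}_{\max}=\bigvee_{n\le 0}\mathcal{H}_n=\mathcal{H}_0$) gives $\bigcap_{n\le 0}(\mathcal{C}\vee\mathcal{H}_n)=\mathcal{C}\vee\mathcal{H}_\infty$ $\mathbf{Q}$-a.s. Finally, an a.s.\ equality of $\sigma$-fields is an assertion about certain symmetric differences being null, and $\mathbf{P}\ll\mathbf{Q}$ forces every $\mathbf{Q}$-null set to be $\mathbf{P}$-null; hence the $\mathbf{Q}$-a.s.\ identity descends to the required $\mathbf{P}$-a.s.\ identity (\ref{eq:faspt}).

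The routine parts are the two martingale-convergence limits and the monotone class step. The main thing to get right is the construction of the reference measure $\mathbf{Q}$ and the verification that $\mathbf{P}\ll\mathbf{Q}$ on the full $\sigma$-field generated by the process: this is precisely where the existence of a transition density is indispensable, as it is the mechanism that converts the genuine (and generally false) exchange under $\mathbf{P}$ into the trivial exchange under the independence-making measure $\mathbf{Q}$. I would also take care that the transfer of the $\sigma$-field identity uses only $\mathbf{P}\ll\mathbf{Q}$ and not equivalence, consistent with the lemma assuming a density that need not be strictly positive.
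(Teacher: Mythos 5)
Your proof is correct and follows exactly the route the paper takes: construct a measure $\mathbf{Q}$ with $\mathbf{P}\ll\mathbf{Q}$ under which $\mathcal{F}^Y_{1,k}$ is independent of $\mathcal{F}^X_{-\infty,0}\vee\mathcal{F}^Y_{-\infty,0}$, observe that the exchange of intersection and supremum is trivially valid under such independence, and transfer the a.s.\ identity back to $\mathbf{P}$ via absolute continuity. The paper compresses all of this into two sentences, so your proposal is essentially a fully detailed version of the same argument, including the correct observation that only $\mathbf{P}\ll\mathbf{Q}$ (not equivalence) is needed, so strict positivity of $g$ is not required.
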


\begin{proof}
It is easily seen that the assumption guarantees the existence of a 
probability measure $\mathbf{Q}$ such that $\mathbf{P}\ll\mathbf{Q}$
and $\mathcal{F}_{1,k}^Y$ is independent of 
$\mathcal{F}_{-\infty,0}^X\vee\mathcal{F}_{-\infty,0}^Y$ under 
$\mathbf{Q}$.  Thus the identity in (\ref{eq:faspt}) holds 
$\mathbf{Q}$-a.s., and therefore $\mathbf{P}$-a.s.
\end{proof}

The proof is based on the following result.

\begin{lem}
\label{lem:kzero}
Suppose there exists a unique $\mathsf{\Pi}$-invariant measure
with barycenter $\mu$ and that assumption (\ref{eq:faspt}) holds.  
Then we have for every $A\in\mathcal{B}(E)$
$$
	\mathbf{P}\big[X_0\in A\big|
	\textstyle{\bigcap_{n}\big(\mathcal{F}_{-\infty,0}^Y
	\vee\mathcal{F}_{-\infty,n}^X}\big)\big] =
	\mathbf{P}\big[X_0\in A\big|\mathcal{F}_{-\infty,0}^Y\big] 
	\quad\mathbf{P}\mbox{-a.s.}
$$
\end{lem}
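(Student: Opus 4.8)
The plan is to produce two distinguished $\mathsf{\Pi}$-invariant measures, both with barycenter $\mu$, and then to invoke the assumed uniqueness to force them to coincide; the a.s.\ identity claimed in the lemma will follow by a strict-convexity argument upgrading equality in law to equality a.s. To this end, for $k\in\mathbb{Z}$ set $\mathcal{G}_{-\infty,k}:=\bigcap_n(\mathcal{F}_{-\infty,k}^Y\vee\mathcal{F}_{-\infty,n}^X)$ and define the random measures $\tilde\pi_k:=\mathbf{P}[X_k\in\,\cdot\,|\mathcal{F}_{-\infty,k}^Y]$ and $\hat\pi_k:=\mathbf{P}[X_k\in\,\cdot\,|\mathcal{G}_{-\infty,k}]$. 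By stationarity the laws of $\tilde\pi_0$ and $\hat\pi_0$ do not depend on the time index; denote them by $\mathsf{m}_{\rm min}$ and $\mathsf{m}_{\rm max}$, respectively. Both have barycenter $\mu$ by the tower property, and $\mathsf{m}_{\rm min}$ is $\mathsf{\Pi}$-invariant by Lemma \ref{lem:exist}.

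The heart of the matter, and the step I expect to be the main obstacle, is to show that $\mathsf{m}_{\rm max}$ is likewise $\mathsf{\Pi}$-invariant; this is precisely where assumption (\ref{eq:faspt}) is indispensable. I would argue that $(\hat\pi_k)_{k\in\mathbb{Z}}$ is a stationary Markov process with transition kernel $\mathsf{\Pi}$ by mimicking the proof of Proposition \ref{prop:markov}. Since $\mathcal{G}_{-\infty,0}\subseteq\mathcal{F}_{-\infty,0}^X\vee\mathcal{F}_{-\infty,0}^Y$, the Markov property of the hidden Markov model together with the tower property gives that the conditional law of $(X_1,Y_1)$ given $\mathcal{G}_{-\infty,0}$ is $P_{\hat\pi_0}$ in the notation of Lemma \ref{lem:meyer}. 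Conditioning in addition on $Y_1$ and applying Lemmas \ref{lem:weiz} and \ref{lem:meyer} yields $\mathbf{P}[X_1\in\,\cdot\,|\mathcal{G}_{-\infty,0}\vee\sigma(Y_1)]=\Pi(\hat\pi_0,Y_1)$. At this point assumption (\ref{eq:faspt}) with $k=1$ enters decisively: it asserts exactly that $\mathcal{G}_{-\infty,0}\vee\sigma(Y_1)=\mathcal{G}_{-\infty,1}$, so the left-hand side is $\hat\pi_1$ and we obtain the recursion $\hat\pi_1=\Pi(\hat\pi_0,Y_1)$, and by stationarity $\hat\pi_k=\Pi(\hat\pi_{k-1},Y_k)$ for all $k$. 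Combined with the identity $\mathbf{P}[(X_k,Y_k)\in\,\cdot\,|\mathcal{G}_{-\infty,k-1}]=P_{\hat\pi_{k-1}}$ and the fact that the $\sigma$-fields generated by $\hat\pi_0,\ldots,\hat\pi_{k-1}$ are contained in the increasing family $\mathcal{G}_{-\infty,k-1}$, this shows as in Proposition \ref{prop:markov} that $\mathbf{P}[\hat\pi_k\in\,\cdot\,|\mathcal{G}_{-\infty,k-1}]=\mathsf{\Pi}(\hat\pi_{k-1},\,\cdot\,)$, whence $(\hat\pi_k)$ is Markov with kernel $\mathsf{\Pi}$ and $\mathsf{m}_{\rm max}$ is $\mathsf{\Pi}$-invariant.

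Now I would invoke the hypothesis. Since $\mathsf{m}_{\rm min}$ and $\mathsf{m}_{\rm max}$ are both $\mathsf{\Pi}$-invariant with barycenter $\mu$, uniqueness forces $\mathsf{m}_{\rm min}=\mathsf{m}_{\rm max}$. Fixing $A\in\mathcal{B}(E)$ and applying the coordinate evaluation $\nu\mapsto\nu(A)$, this shows that the bounded random variables $\tilde\pi_0(A)=\mathbf{P}[X_0\in A|\mathcal{F}_{-\infty,0}^Y]$ and $\hat\pi_0(A)=\mathbf{P}[X_0\in A|\mathcal{G}_{-\infty,0}]$ have the same law.

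Finally, because $\mathcal{F}_{-\infty,0}^Y\subseteq\mathcal{G}_{-\infty,0}$, the tower property gives $\tilde\pi_0(A)=\mathbf{E}[\hat\pi_0(A)|\mathcal{F}_{-\infty,0}^Y]$. Applying the conditional Jensen inequality to the strictly convex function $t\mapsto t^2$ yields $\mathbf{E}[\hat\pi_0(A)^2]\ge\mathbf{E}[\tilde\pi_0(A)^2]$, with equality if and only if $\hat\pi_0(A)$ is $\mathcal{F}_{-\infty,0}^Y$-measurable, i.e.\ $\hat\pi_0(A)=\tilde\pi_0(A)$ a.s.; since the two variables have the same law the two second moments coincide, so equality holds and $\hat\pi_0(A)=\tilde\pi_0(A)$ $\mathbf{P}$-a.s. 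As $A\in\mathcal{B}(E)$ was arbitrary, this is precisely the assertion of the lemma.
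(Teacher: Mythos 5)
Your proposal is correct and follows essentially the same route as the paper's proof: both exhibit the laws of $\mathbf{P}[X_0\in\cdot|\mathcal{F}^Y_{-\infty,0}]$ and $\mathbf{P}[X_0\in\cdot|\bigcap_n(\mathcal{F}^Y_{-\infty,0}\vee\mathcal{F}^X_{-\infty,n})]$ as $\mathsf{\Pi}$-invariant measures with barycenter $\mu$ (using (\ref{eq:faspt}) and the conditioning lemma of von Weizs\"acker for the second one), invoke uniqueness to equate the laws, and upgrade equality of second moments to a.s.\ equality. Your conditional-Jensen equality-case argument at the end is just a rephrasing of the paper's computation $\mathbf{E}[(\pi^1_0(A)-\pi^0_0(A))^2]=\mathbf{E}[(\pi^1_0(A))^2]-\mathbf{E}[(\pi^0_0(A))^2]=0$.
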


\begin{proof}
Define the regular conditional probabilities
$\pi_k^0=\mathbf{P}[X_k\in\,\cdot\,|\mathcal{F}_{-\infty,k}^Y]$ and
$\pi_k^1=\mathbf{P}[X_k\in\,\cdot\,|\bigcap_n(\mathcal{F}_{-\infty,k}^Y
\vee\mathcal{F}_{-\infty,n}^X)]$, and denote by
$\mathsf{m}_0,\mathsf{m}_1\in\mathcal{P}(\mathcal{P}(E))$ the laws of 
$\pi_0^0$ and $\pi_0^1$, respectively.  Then $\mathsf{m}_0$ is the 
$\mathsf{\Pi}$-invariant measure defined in the proof of Lemma 
\ref{lem:exist}.  We claim that $\mathsf{m}_1$ is also 
$\mathsf{\Pi}$-invariant.  Indeed, this follows as a variant of Lemma 
\ref{lem:weiz} (pp.\ 95--96 in \cite{Weiz83}) and the assumption 
(\ref{eq:faspt}) imply that $\pi_k^1=\Pi(\pi_{k-1}^1,Y_k)$ 
$\mathbf{P}$-a.s., so that $(\pi_k^1)_{k\in\mathbb{Z}}$ is Markov with 
transition kernel $\mathsf{\Pi}$, while $(\pi_k^1)_{k\in\mathbb{Z}}$ is 
easily seen to be a stationary process.

Clearly $\mathsf{m}_0$ and $\mathsf{m}_1$ both have barycenter $\mu$, so 
by assumption $\mathsf{m}_0=\mathsf{m}_1$.  Thus
$$
	\mathbf{E}[(\pi_k^1(A)-\pi_k^0(A))^2] =
	\mathbf{E}[(\pi_k^1(A))^2] - \mathbf{E}[(\pi_k^0(A))^2] = 
	\mathsf{m}_1(F_A)-\mathsf{m}_0(F_A) = 0
$$
for every $A\in\mathcal{B}(E)$, where we defined
$F_A:\nu\mapsto (\nu(A))^2$.  It follows that
$\pi_0^1(A)=\pi_0^0(A)$ $\mathbf{P}$-a.s.\ for every $A\in\mathcal{B}(E)$,
which completes the proof.
\end{proof}

To complete the proof, we require the following easy variant of 
Lemma \ref{lem:meyer}.

\begin{lem}
\label{lem:smooth}
For $\nu\in\mathcal{P}(E)$ and $k\in\mathbb{N}$, define the probability 
measure
\begin{multline*}
	P_\nu^k(A) = \int \mathbf{1}_A(x_0,y_1,\ldots,y_k)\,
	\nu(dx_0)\,P(x_0,dx_1)\,\Phi(x_1,dy_1) \cdots\mbox{}\\
	P(x_{k-1},dx_k)\,\Phi(x_k,dy_k)
	\quad\mbox{for }A\in\mathcal{B}(E\times F^k).
\end{multline*}
Denote by $X:E\times F^k\to E$ and 
$Y^k:E\times F^k\to F^k$ the canonical projections.  There exists a 
measurable map $\Sigma^k:\mathcal{P}(E)\times F^k\to
\mathcal{P}(E)$ such that $\Sigma^k(\nu,Y^k)$ is a version of the 
regular conditional probability $P_\nu^k(X\in\,\cdot\,|Y^k)$ for every
$\nu\in\mathcal{P}(E)$.
\end{lem}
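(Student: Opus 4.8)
The plan is to reduce the statement to Lemma~\ref{lem:meyer} (that is, to \cite{Mey76}, Lemma~1), which already supplies exactly this kind of disintegration depending measurably on the parameter $\nu$, albeit for a single observation step. Two features distinguish the present situation: the scalar observation $Y$ is replaced by the block $Y^k=(Y_1,\ldots,Y_k)$, and the variable to be recovered is the initial state $x_0$ rather than the state reached after one transition. Both are cosmetic once the measure $P_\nu^k$ is recast as a one-step model on $E\times F^k$, so I would not reprove anything but instead set up the right substitution.

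First I would collapse the $k$ observation steps into a single observation kernel. Define $\Psi:E\times\mathcal{B}(F^k)\to[0,1]$ by
\[
	\Psi(x_0,B) = \int \mathbf{1}_B(y_1,\ldots,y_k)\,
	P(x_0,dx_1)\,\Phi(x_1,dy_1)\cdots P(x_{k-1},dx_k)\,\Phi(x_k,dy_k),
\]
where $x_1,\ldots,x_k$ are integrated out exactly as in the definition of $P_\nu^k$. I claim $\Psi$ is a transition kernel from $E$ to the Polish space $F^k$: for fixed $x_0$ it is plainly a probability measure on $F^k$, and measurability of $x_0\mapsto\Psi(x_0,B)$ for $B\in\mathcal{B}(F^k)$ follows from the standard fact that a finite composition of transition kernels is again a transition kernel, integrating the jointly measurable integrand successively against $\Phi$ and $P$. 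With this notation, integrating out $x_1,\ldots,x_k$ shows that, as a measure on $E\times F^k$,
\[
	P_\nu^k(d(x_0,y_1,\ldots,y_k)) = \nu(dx_0)\,\Psi(x_0,d(y_1,\ldots,y_k)).
\]
Then I would invoke \cite{Mey76}, Lemma~1 in the form of Lemma~\ref{lem:meyer}, but with the observation space $F$ replaced by $F^k$, the observation kernel $\Phi$ replaced by $\Psi$, and the transition kernel $P$ replaced by the identity kernel $x\mapsto\delta_x$. For this choice the measure produced there is $\int \mathbf{1}_A(x_0,y^k)\,\nu(dx_0)\,\delta_{x_0}(dx)\,\Psi(x,dy^k)=P_\nu^k(A)$, the projection $X$ coincides with the $E$-coordinate $x_0$, and \cite{Mey76}, Lemma~1 (a general statement about measurable disintegration of a parametrized family of laws on a product of Polish spaces) applies to the substituted kernels unchanged. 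This yields the desired measurable map $\Sigma^k:\mathcal{P}(E)\times F^k\to\mathcal{P}(E)$ with $\Sigma^k(\nu,Y^k)$ a version of $P_\nu^k(X\in\,\cdot\,|Y^k)$ for every $\nu$.

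The only genuine point requiring attention is the claim that $\Psi$ is a bona fide transition kernel, in particular the joint measurability invoked in its construction; this is the routine composition-of-kernels argument and is where a little care, rather than any real difficulty, is needed. Everything else is a relabeling of Lemma~\ref{lem:meyer}, and the identity-kernel substitution is precisely what lets us recover the \emph{initial} state $x_0$ from the one-step template without any further work.
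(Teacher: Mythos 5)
Your reduction is correct and matches the paper's intent exactly: the paper states this lemma without proof as an ``easy variant'' of Lemma~\ref{lem:meyer}, and your substitution --- collapsing the $k$ steps into a single observation kernel $\Psi$ from $E$ to $F^k$ and taking the state transition to be the identity kernel so that the conditioned variable becomes the initial state $x_0$ --- is precisely the relabeling that makes Meyer's disintegration lemma apply verbatim. The one point you flag, measurability of $x_0\mapsto\Psi(x_0,B)$, is indeed the standard composition-of-kernels fact, so nothing is missing.
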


We now complete the proof.

\begin{prop}
Suppose there exists a unique $\mathsf{\Pi}$-invariant measure with 
barycenter $\mu$ and that assumption (\ref{eq:faspt}) holds.  Then 
(\ref{eq:exchg}) holds true.
\end{prop}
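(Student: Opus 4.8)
The plan is to upgrade the pointwise statement of Lemma~\ref{lem:kzero} to the full $\sigma$-field identity (\ref{eq:exchg}). Lemma~\ref{lem:kzero} gives, under the standing assumptions, that
$$
	\mathbf{P}\big[X_0\in A\,\big|\,\mathcal{G}_{-\infty,0}\big]
	= \mathbf{P}\big[X_0\in A\,\big|\,\mathcal{F}_{-\infty,0}^Y\big]
	\quad\mathbf{P}\mbox{-a.s.}
$$
for every fixed $A\in\mathcal{B}(E)$, where $\mathcal{G}_{-\infty,0}:=\bigcap_n(\mathcal{F}_{-\infty,0}^Y\vee\mathcal{F}_{-\infty,n}^X)$. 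This already shows that the conditional distribution of the single variable $X_0$ given $\mathcal{G}_{-\infty,0}$ is $\mathcal{F}_{-\infty,0}^Y$-measurable. The first step is to promote this from $X_0$ to the entire trajectory $(X_k,Y_k)_{k\le 0}$, i.e.\ to show that the conditional law of the whole past given $\mathcal{G}_{-\infty,0}$ already agrees with its conditional law given $\mathcal{F}_{-\infty,0}^Y$. Since $\mathcal{F}_{-\infty,0}^Y\subseteq\mathcal{G}_{-\infty,0}$ and the $Y$'s are $\mathcal{F}_{-\infty,0}^Y$-measurable, it suffices to handle the $X$-coordinates, and the natural tool is the Markov structure: conditionally on $X_0$ and $\mathcal{F}_{-\infty,0}^Y$, the past $(X_k)_{k<0}$ should carry no extra information relative to $\mathcal{G}_{-\infty,0}$ beyond what $X_0$ already supplies.

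The key step is therefore a shift/Markov argument. First I would apply Lemma~\ref{lem:kzero} not just at time $0$ but, by stationarity and the shift-invariant structure, to obtain that for each $k\le 0$ the conditional law of $X_k$ given $\bigcap_n(\mathcal{F}_{-\infty,0}^Y\vee\mathcal{F}_{-\infty,n}^X)$ coincides with its conditional law given $\mathcal{F}_{-\infty,0}^Y$; the role of assumption (\ref{eq:faspt}) is exactly to let one slide the conditioning time so that the filtering recursion $\pi_k^1=\Pi(\pi_{k-1}^1,Y_k)$ remains valid, as already exploited in the proof of Lemma~\ref{lem:kzero}. Next I would use Lemma~\ref{lem:smooth}: since $\Sigma^k(\nu,\cdot)$ realizes $P_\nu^k(X\in\cdot\,|Y^k)$ measurably in $\nu$, and since the one-dimensional conditional law agrees, one can reconstruct the joint conditional law of a finite block $(X_0,Y_1,\dots,Y_k)$ (and hence, by reflecting time, of the past blocks) as a measurable functional of the $\mathcal{F}_{-\infty,0}^Y$-measurable filter together with the observations. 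Iterating this over finite blocks and passing to the limit shows that every bounded $\mathcal{G}_{-\infty,0}$-measurable random variable of the form $\mathbf{E}[f(X_{k_1},\dots,X_{k_m})\,|\,\mathcal{G}_{-\infty,0}]$ is in fact $\mathcal{F}_{-\infty,0}^Y$-measurable.

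Finally I would close the argument by a monotone-class / measure-determining step. The conditional-law identity established above says that $\mathbf{E}[W\,|\,\mathcal{G}_{-\infty,0}]$ is $\mathcal{F}_{-\infty,0}^Y$-measurable $\mathbf{P}$-a.s.\ for $W$ ranging over a class of bounded functions of $(X_k,Y_k)_{k\le 0}$ that generates $\mathcal{G}_{-\infty,0}$; since these functions are measure-determining on the underlying Polish trajectory space, a Dynkin-system argument extends this to all bounded $\mathcal{G}_{-\infty,0}$-measurable $W$. But $\mathbf{E}[W\,|\,\mathcal{G}_{-\infty,0}]=W$ $\mathbf{P}$-a.s.\ for such $W$, so every $W$ is $\mathbf{P}$-a.s.\ equal to an $\mathcal{F}_{-\infty,0}^Y$-measurable random variable; completing $\mathcal{F}_{-\infty,0}^Y$ with the $\mathbf{P}$-null sets then yields $\mathcal{G}_{-\infty,0}=\mathcal{F}_{-\infty,0}^Y$ $\mathbf{P}$-a.s., which is exactly (\ref{eq:exchg}). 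The main obstacle I anticipate is the bookkeeping in the shift step: one must make sure that applying Lemma~\ref{lem:kzero} at shifted times, and assembling the finite-dimensional conditional laws via $\Sigma^k$, genuinely reconstructs the \emph{joint} conditional distribution given $\mathcal{G}_{-\infty,0}$ rather than merely matching marginals, and it is precisely here that (\ref{eq:faspt})---the stability of the intersected $\sigma$-field under adjoining future observation blocks---does the decisive work.
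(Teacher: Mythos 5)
Your proposal follows the paper's own route: reduce (\ref{eq:exchg}) to showing $\mathbf{E}[W\mid\bigcap_n(\mathcal{F}^Y_{-\infty,0}\vee\mathcal{F}^X_{-\infty,n})]=\mathbf{E}[W\mid\mathcal{F}^Y_{-\infty,0}]$ for $W$ in an $L^1$-dense class of past functionals; establish the single-time identity for each $X_k$, $k\le 0$, by applying Lemma~\ref{lem:kzero} at time $k$ (via stationarity) and transporting the filter forward with the smoothing kernel $\Sigma^k$ of Lemma~\ref{lem:smooth}, with (\ref{eq:faspt}) and the von Weizs\"acker lemma justifying the representation of the conditional law given the intersected $\sigma$-field; and close with a density/monotone-class argument. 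Those pieces are all correct.

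The genuine gap is the step you yourself flag: passing from agreement of the one-dimensional conditional laws of the $X_k$ to agreement of the conditional laws of whole finite blocks. You propose that (\ref{eq:faspt}) ``does the decisive work'' here, but it does not: its role is confined to the marginal step (it is what makes $\mathbf{P}[X_k\in\cdot\mid\bigcap_n(\cdots)]=\Sigma^k(\pi^1_k,Y_{k+1},\dots,Y_0)$ legitimate), and by itself it says nothing about joint laws --- matching marginals does not in general match joints. What actually closes the step is the Markov property of the pair process: since $\bigcap_n(\mathcal{F}^Y_{-\infty,0}\vee\mathcal{F}^X_{-\infty,n})\subseteq\mathcal{F}^Y_{-\infty,0}\vee\mathcal{F}^X_{-\infty,k}$ and $\mathcal{F}^X_{k,0}\vee\mathcal{F}^Y_{k,0}$ is conditionally independent of the earlier past given $X_k$, one has $\mathbf{E}[Z\mid\bigcap_n(\cdots)]=\mathbf{E}[\,\mathbf{E}[Z\mid\sigma\{X_k\}\vee\mathcal{F}^Y_{k,0}]\mid\bigcap_n(\cdots)]$ for every block functional $Z\in L^1(\mathcal{F}^X_{k,0}\vee\mathcal{F}^Y_{k,0})$; since products $Z^X(X_k)Z^Y(Y_k,\dots,Y_0)$ are total and $Z^Y$ factors out of both conditional expectations, the joint identity collapses to the marginal one for $X_k$ alone. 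Note that the relevant conditioning variable is the \emph{earliest} coordinate $X_k$ of the block, not $X_0$ as in the heuristic of your first paragraph. A smaller omission: to feed $\pi^0_k=\pi^1_k$ into $\Sigma^k$ you need equality as measures, which requires a countable generating class and the Dynkin lemma, not merely $\pi^0_k(A)=\pi^1_k(A)$ a.s.\ for each fixed $A$. With these insertions your argument is complete and coincides with the paper's.
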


\begin{proof}
As $\bigcup_{k\le 0}L^1(\mathcal{F}_{k,0}^X\vee\mathcal{F}_{k,0}^Y,
\mathbf{P})$ is dense in 
$L^1(\mathcal{F}_{-\infty,0}^X\vee\mathcal{F}_{-\infty,0}^Y,\mathbf{P})$,
it suffices to show that for every $k\le 0$ and
$Z\in L^1(\mathcal{F}_{k,0}^X\vee\mathcal{F}_{k,0}^Y,\mathbf{P})$
$$
	\mathbf{E}\big[Z\big|
	\textstyle{\bigcap_{n}\big(\mathcal{F}_{-\infty,0}^Y
	\vee\mathcal{F}_{-\infty,n}^X}\big)\big] =
	\mathbf{E}\big[Z\big|\mathcal{F}_{-\infty,0}^Y\big] 
	\quad\mathbf{P}\mbox{-a.s.}
$$
However, for $Z\in 
L^1(\mathcal{F}_{k,0}^X\vee\mathcal{F}_{k,0}^Y,\mathbf{P})$, we have
by the Markov property
$$
	\mathbf{E}\big[Z\big|
	\textstyle{\bigcap_{n}\big(\mathcal{F}_{-\infty,0}^Y
	\vee\mathcal{F}_{-\infty,n}^X}\big)\big] =
	\mathbf{E}\big[\,
	\mathbf{E}[Z|\sigma\{X_k\}\vee\mathcal{F}_{k,0}^Y]\,
	\big|
	\textstyle{\bigcap_{n}\big(\mathcal{F}_{-\infty,0}^Y
	\vee\mathcal{F}_{-\infty,n}^X}\big)\big].
$$
It therefore suffices to consider $Z\in 
L^1(\sigma\{X_k\}\vee\mathcal{F}_{k,0}^Y,\mathbf{P})$.
But note that the class of random variables
$\{Z^XZ^Y:Z^X\in L^\infty(\sigma\{X_k\},\mathbf{P}),~Z^Y\in 
L^\infty(\mathcal{F}_{k,0}^Y,\mathbf{P})\}$ is total in
$L^1(\sigma\{X_k\}\vee\mathcal{F}_{k,0}^Y,\mathbf{P})$.
Therefore, it suffices to show that
$$
	\mathbf{P}\big[X_k\in A\big|
	\textstyle{\bigcap_{n}\big(\mathcal{F}_{-\infty,0}^Y
	\vee\mathcal{F}_{-\infty,n}^X}\big)\big] =
	\mathbf{P}\big[X_k\in A\big|\mathcal{F}_{-\infty,0}^Y\big] 
	\quad\mathbf{P}\mbox{-a.s.}	
$$
for all $k\le 0$ and $A\in\mathcal{B}(E)$.  For $k=0$, this follows
directly from Lemma \ref{lem:kzero}.

For $k<0$, we proceed as follows.  Define $\pi_k^0$ and $\pi_k^1$ as in
the proof of Lemma \ref{lem:kzero}.  It is easily established using Lemma 
\ref{lem:weiz} that
$$
	\mathbf{P}\big[X_k\in\,\cdot\,\big|\mathcal{F}_{-\infty,0}^Y\big] =
	\Sigma^k(\pi_k^0,Y_{k+1},\ldots,Y_0)\quad
	\mathbf{P}\mbox{-a.s.}
$$
Similarly, a variant of Lemma \ref{lem:weiz} (pp.\ 95--96 in 
\cite{Weiz83}) and (\ref{eq:faspt}) imply
$$
	\mathbf{P}\big[X_k\in\,\cdot\,\big|
	\textstyle{\bigcap_{n}\big(\mathcal{F}_{-\infty,0}^Y
	\vee\mathcal{F}_{-\infty,n}^X}\big)\big] =
	\Sigma^k(\pi_k^1,Y_{k+1},\ldots,Y_0)\quad
	\mathbf{P}\mbox{-a.s.}
$$
But by Lemma \ref{lem:kzero}, applying the Dynkin system 
lemma with a countable generating system, and using that
$(X_k,Y_k)_{k\in\mathbb{Z}}$ is stationary under $\mathbf{P}$,
it follows directly that $\pi_k^0=\pi_k^1$ $\mathbf{P}$-a.s.  
This completes the proof.
\end{proof}

\bibliographystyle{acmtrans-ims}
\bibliography{ref}

\end{document}